 \newtheorem{thm}{Theorem}[section]
 \newtheorem{prop}[thm]{Proposition}
 \newtheorem{lem}[thm]{Lemma}
 \newtheorem{corollary}[thm]{Corollary}
 \theoremstyle{definition}
 \newtheorem{example}[thm]{Example}
 \newtheorem{remark}[thm]{Remark}
\numberwithin{equation}{section}
\newcommand{\bbZ}{{\mathbb{Z}}}
\newcommand{\bbP}{{\mathbb{P}}}
\newcommand{\bbC}{{\mathbb{C}}}
\newcommand{\bbk}{{\Bbbk}}
\newcommand{\tr}{\operatorname{Tr}}
\newcommand{\GL}{\operatorname{GL}}
\newcommand{\PGL}{\operatorname{PGL}}
\newcommand{\Aut}{\operatorname{Aut}}
\newcommand{\Proj}{\operatorname{Proj}}
\newcommand{\rank}{\operatorname{rank}}
\newcommand{\corank}{\operatorname{corank}}
\newcommand{\Or}{\operatorname{Or}}
\newcommand{\Bs}{\operatorname{Bs}}
\newcommand{\Idem}{\operatorname{Idem}}
\newcommand{\Pic}{\operatorname{Pic}}
\newcommand{\Sing}{\operatorname{Sing}}
\newcommand{\Pf}{\operatorname{Pf}}
\newcommand{\rad}{\operatorname{rad}}
\newcommand{\hdisc}{\tfrac{1}{2}\operatorname{disc}}
\newcommand{\disc}{\operatorname{disc}}
\newcommand{\cha}{\operatorname{char}}
\newcommand{\Arf}{\operatorname{Arf}}
\newcommand{\bsm}{\left(\begin{smallmatrix}}
\newcommand{\esm}{\end{smallmatrix}\right)}
\newcommand{\la}{\langle}
\newcommand{\ra}{\rangle}
\newcommand{\calB}{\mathcal{B}}
\newcommand{\calE}{\mathcal{E}}
\newcommand{\calF}{\mathcal{F}}
\newcommand{\calI}{\mathcal{I}}
\newcommand{\calO}{\mathcal{O}}
\newcommand{\frakS}{\mathfrak{S}}
\newcommand{\sfq}{{\mathsf{q}}}
\newcommand{\sfb}{{\mathsf{b}}}
\newcommand{\sfP}{{\mathsf{P}}}
\newcommand{\sfD}{{\mathsf{D}}}
\newcommand{\beq}{\begin{equation}}
\newcommand{\eeq}{\end{equation}}
\begin{document}

\title[Regular pairs of quadratic forms]{Regular pairs of quadratic forms on
odd-dimensional spaces in characteristic 2}
\author{Igor Dolgachev}
\author{Alexander Duncan}
\thanks{The second author was partially supported by
National Science Foundation
grant DMS 0943832 and National Security Agency grant H98230-16-1-0309.}

\begin{abstract}
We describe a normal form for a smooth intersection of two quadrics
in even-dimensional projective space over an arbitrary
field of characteristic 2.
We use this to obtain a description of the automorphism group of such
a variety.
As an application, we show that every quartic del Pezzo surface over a
perfect field of characteristic 2 has a canonical rational point
and, thus, is unirational. 
\end{abstract}

\maketitle

\section{Introduction}

Let $(q_0,q_1)$ be a pair of quadratic forms on a 
vector space $E$ over a field $\bbk$.
The common zeros of the pair define a subvariety $X = V(q_0,q_1)$ in the
projective space  $|E|=\bbP(E^\vee)$ of lines in $E$.
We say the pair is \emph{regular} if the variety $X$
is smooth and of codimension $2$ in $|E|$.
One may also consider the pencil of quadrics spanned by
$V(q_0)$ and $V(q_1)$; the pencil is \emph{regular} if $X$ is  smooth.

When $\Bbbk = \bbC$, the field of complex numbers, a classical result
due to A. Cauchy and C. Jacobi (see \cite{Muth}) states that any
regular pair can be \emph{simultaneously diagonalized}.
When $q_0$ is non-degenerate,
this means that we may find a basis $x_1,\ldots,x_n$ in the dual space
$E^\vee$ such that
\[
q_0 = \sum_{i=1}^n x_i^2,
\quad\quad q_1 = \sum_{i=1}^n a_ix_i^2\ .
\]
Here the coefficients $a_1,\ldots, a_n$ are distinct roots of the
discriminant polynomial $\delta(t) = \det(t M_0-M_1)$, 
where $M_0$ and $M_1$ are symmetric matrices representing the polar
symmetric bilinear forms associated to $q_0$ and $q_1$.
When the pair is not necessarily regular, the classification was carried
out by K. Weierstrass~\cite{Weir} by considering the elementary divisors
of $t M_0-M_1$.
Over an arbitrary field $\bbk$ of characteristic not $2$, pairs of
quadratic forms were classified by L.~Kronecker and L.~Dickson.
A modern exposition of their theory can be found
in~\cite{Wat76} (see also~\cite{LS99}).

Unsurprisingly, in characteristic $p = 2$ the situation is more
complicated.
The main reason for this is that one can no longer identify quadratic forms
with symmetric bilinear forms.
Even worse, when $n=\dim(E)$ is odd the determinant of
$t M_0-M_1$ is identically zero.
One may still consider symmetric and alternating bilinear forms
in this case
(see, for example, \cite{Ish}, \cite{Wat77}, \cite{LS99}),
but the connection with quadratic forms is more tenuous.

The geometry of the intersection $X$ of two quadrics differs drastically
depending on whether $n$ is even or odd.
Examples when $n$ is even include quartic elliptic curves in $\bbP^3$ and
quadratic line complexes in $\bbP^5$.
When $n$ is odd, the first interesting example is
a quartic del Pezzo surface in $\bbP^4$ isomorphic to 
a blow-up of 5 points in the projective plane.

When $n=2m$ is even and $\Bbbk = \bbC$, one can associate to $X$ its
intermediate Jacobian $J(X)$.
A theorem of Luc Gauthier and Andr\'e Weil \cite{Gau}
asserts that $J(X)$ is isomorphic to the Jacobian
variety of a hyperelliptic curve $C$ of genus $m-1$ with
equation $y^2+\delta(t) = 0$.
Over an algebraically closed field of characteristic $p\ne 2$,
the Jacobian is isomorphic to the variety of $(m-2)$-planes in $X$
(see also \cite{Wang}).
If $p = 2$ and $n$ is even, one should use the Pfaffian $P(t)$ defined
so that $P(t)^2 = \delta(t)$.
Under the condition that the roots $\alpha_1, \ldots, \alpha_m$
of $P(t)$ are distinct, U.~Bhosle~\cite{Bhosle} provides a normal form
\[
q_0 = \sum_{i=1}^m x_ix_{m+i} \ , \quad \quad
q_1 = \sum_{i=1}^m
\left( a_i x_i^2 + \alpha_i x_ix_{m+i} + b_i x_{m+i}^2\right)
\]
where $a_1,\ldots,a_m$ and $b_1,\ldots,b_m$ are in $\bbk$.
The variety of $(m-2)$-planes is isomorphic to the
Jacobian of a hyperelliptic curve in this case as well.

This paper is concerned with pairs of quadratic forms, and their
corresponding pencils, in the case where $\bbk$ has characteristic $2$
and $n=\dim(E)=2m+1$ is odd.
As mentioned above, the discriminant polynomial is identically zero
in this case.
Instead, we use the \emph{half-discriminant} introduced by
M.~Kneser~\cite{Kneser}, which we recall in Section~\ref{sec:prelims}.
This is a homogeneous polynomial of degree $n$
\begin{equation} \label{eq:halfDisc}
\Delta(t_0,t_1) : = \hdisc(t_0q_0+t_1q_1) =  a_0 t_0^n +
a_1 t_0^{n-1}t_1 + \ldots + a_n t_1^n \end{equation} in
$\bbk[t_0,t_1]$.
This polynomial behaves like the usual discriminant when the
characteristic is not $2$:
the pencil is regular if and only if the zero subscheme $V(\Delta)$ in
$\bbP^1$ is smooth of dimension $0$
(we give a geometric proof, an algebraic proof can be found 
in \cite{LS02}).
When $\bbk$ is algebraically closed, this simply means that
$\Delta \not\equiv 0$ and $V(\Delta)$ consists of $n$ distinct points.

Our first result is the following:

\begin{thm}
\label{thm:NormalForm}
Let $(q_0,q_1)$ be a regular pair of quadratic forms on a vector
space $E$ of dimension $n=2m+1 \ge 3$ over a field $\bbk$ of
characteristic 2. Then there exists a basis
$(x_0,\ldots,x_{m},y_0,\ldots,y_{m-1})$
in $E^\vee$ such that 
\begin{align}\label{eq:main}
\begin{split}
q_0& = \sum_{i=0}^{m}a_{2i}x_i^2+
\sum_{i=0}^{m-1} x_{i+1}y_i+\sum_{i=0}^{m-1}r_{2i+1}y_i^2,\\
q_1& =
\sum_{i=0}^{m}a_{2i+1}x_i^2+\sum_{i=0}^{m-1}
x_{i}y_i+\sum_{i=0}^{m-1}r_{2i}y_i^2,
\end{split}
\end{align}
where the coefficients $a_0,\ldots, a_{n}$ are equal to those of the
half-discriminant polynomial \eqref{eq:halfDisc},
and $r_0,\ldots,r_{n-2}$ are in $\bbk$.
\end{thm}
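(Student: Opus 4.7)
The plan is in three stages, mirroring the structure of the normal form.

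\emph{Stage 1: a canonical subspace.} For each $[s:t]\in\bbP^1$, the alternating bilinear form $b_{sq_0+tq_1}$ on the odd-dimensional space $E$ has nontrivial radical. Regularity forces this radical to be exactly one-dimensional at every point: otherwise all $2m\times 2m$ Pfaffians of the matrix would vanish at some $[s_0:t_0]$, so the vector of Pfaffians (which parametrizes the radical) would be divisible by $(t_0 s - s_0 t)$, yielding a factor $(t_0 s - s_0 t)^2$ in $\Delta$ and contradicting the simplicity of its roots. This produces a morphism $\phi:\bbP^1\to\bbP(E)$ of degree $m$ (given coordinate-wise by the principal Pfaffians), whose image is a rational normal curve spanning an $(m+1)$-dimensional subspace $L\subset E$. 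For any two radical vectors $v_{[s:t]}$ and $v_{[s':t']}$, the identities $b_{sq_0+tq_1}(v_{[s:t]},v_{[s':t']})=0$ and $b_{s'q_0+t'q_1}(v_{[s:t]},v_{[s':t']})=0$ form a $2\times 2$ linear system in $b_{q_0}(v_{[s:t]},v_{[s':t']})$ and $b_{q_1}(v_{[s:t]},v_{[s':t']})$ with nonzero determinant $st'+s't$ whenever $[s:t]\ne[s':t']$; hence $L$ is totally isotropic for both $b_{q_0}$ and $b_{q_1}$.

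\emph{Stage 2: a basis of $L$.} Lifting $\phi$ to a polynomial parametrization
\[
v_{[s:t]} \;=\; \sum_{i=0}^m s^{m-i}t^i\,e_i
\]
defines a basis $(e_0,\ldots,e_m)$ of $L$ with $e_0\in\rad b_{q_0}$ and $e_m\in\rad b_{q_1}$. Because $L$ is isotropic, $q_0|_L$ and $q_1|_L$ are additive in characteristic~$2$, giving
\[
(sq_0+tq_1)(v_{[s:t]}) \;=\; \sum_{i=0}^m s^{2(m-i)}t^{2i}\bigl(s\,q_0(e_i)+t\,q_1(e_i)\bigr).
\]
The left-hand side equals $\Delta(s,t)$, since Kneser's half-discriminant of a form with one-dimensional bilinear radical coincides with the value of the form on a suitably normalized radical vector, and our polynomial lift provides one; matching coefficients with~\eqref{eq:halfDisc} then yields $q_0(e_i)=a_{2i}$ and $q_1(e_i)=a_{2i+1}$.

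\emph{Stage 3: the complementary basis.} Expanding $b_{sq_0+tq_1}(v_{[s:t]},-)=0$ degree by degree in $(s,t)$ yields the identities $b_{q_0}(e_{i+1},-)=b_{q_1}(e_i,-)$ on all of $E$, for $i=0,\ldots,m-1$. Because $\rad b_{q_0}=\langle e_0\rangle$, the functionals $b_{q_0}(e_i,-)$ for $i=1,\ldots,m$ are linearly independent; they vanish on $L$ and descend to a basis of $(E/L)^*$. Lifting the dual basis produces vectors $f_0,\ldots,f_{m-1}\in E$ with $b_{q_0}(e_{i+1},f_j)=b_{q_1}(e_i,f_j)=\delta_{ij}$. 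The main obstacle, and the only technical step, is to adjust these lifts $f_j\mapsto f_j+\sum_k\mu_{jk}e_k$ so that $\langle f_0,\ldots,f_{m-1}\rangle$ is simultaneously isotropic for both bilinear forms. This translates to a coupled linear system of $m(m-1)$ equations in $m(m+1)$ unknowns $\mu_{jk}$, which I would solve by a recursive argument exploiting the chain structure in which the two forms couple via the index shift $e_i\leftrightarrow e_{i+1}$. Once the $f_j$ are fixed, setting $r_{2i+1}:=q_0(f_i)$ and $r_{2i}:=q_1(f_i)$ produces the normal form~\eqref{eq:main}.
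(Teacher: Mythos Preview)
Your three-stage plan is the paper's strategy, and your Stage~2 (identifying the $a_i$ via $\Delta=(sq_0+tq_1)(\Omega)$) is exactly the paper's computation. But two steps are asserted without proof, and neither is automatic. In Stage~1 you correctly argue that the Pfaffians have no common zero (corank one everywhere), but then jump to ``the image of $\phi$ is a rational normal curve spanning $m{+}1$ dimensions'', i.e.\ that the $e_i$ are linearly independent. This inference fails for arbitrary degree-$m$ polynomials: in characteristic~$2$ the triple $s^2,\,t^2,\,s^2+t^2$ has no common zero yet spans only a plane. In Stage~3 you defer the solvability of the $m(m-1)$-by-$m(m+1)$ linear system to an unspecified recursion; the inequality of sizes alone does not guarantee consistency.

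Both gaps are exactly what the Kronecker classification of pairs of alternating forms supplies, and the paper simply quotes it (Theorem~\ref{thm:Kronecker}, from \cite{LS99}): the pair $(b_{q_0},b_{q_1})$ is a single basic singular pair of full size, and a Kronecker basis---with the radical subspace of dimension $m{+}1$ and a complementary common isotropic subspace---comes for free. If you prefer to stay self-contained, here are direct fixes. For the first gap: a nontrivial relation $\sum c_ie_i=0$, combined with your chain identities $b_{q_0}(e_{i+1},-)=b_{q_1}(e_i,-)$, yields a linear shift $(c_0,\ldots,c_m)\mapsto(c_1,\ldots,c_m,c')$ on the space of such relations; an eigenvector over~$\bar\bbk$ with eigenvalue~$\lambda$ gives $\Omega(1,\lambda)=\sum\lambda^ie_i=0$, contradicting the no-common-zero fact you did prove. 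For the second: the kernel of the relevant linear map consists exactly of the catalecticant matrices $\mu_{jk}=s_{j+k}$ (this is the computation of $\Aut(b_0,b_1)$ in Lemma~\ref{lem:autoAlt}), a space of dimension~$2m$, so surjectivity follows from $m(m+1)-2m=m(m-1)$.
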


To the authors' surprise, our normal form seems to be new even in the
case $n = 5$ corresponding to quartic del Pezzo surfaces. 
At least in retrospect, it can be easily deduced from the 
classification of pairs of alternate bilinear forms
(see \cite{LS99}).
Nevertheless, the existence of the normal form has the following
arithmetic consequence particular to the case of characteristic $2$.

\begin{corollary} \label{cor:Xrational}
Let $X$ be a smooth complete intersection of two quadrics in $\bbP^{2m}$
defined over a perfect field $\bbk$ of characteristic $2$.
If $\dim(X) = 2$ (resp. $> 2$), then $X$ is unirational (resp. rational) over 
$\Bbbk$.
\end{corollary}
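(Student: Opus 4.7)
The plan is to use Theorem~\ref{thm:NormalForm} to exhibit a $\bbk$-rational linear subspace of $X$ of the right dimension, and then invoke standard projection arguments for intersections of two quadrics. After bringing $(q_0,q_1)$ to the normal form~\eqref{eq:main}, consider the $\bbk$-rational linear subspace $\Lambda := V(y_0,\ldots,y_{m-1}) \cong \bbP^m$ in $\bbP^{2m}$. The restrictions are
\[
q_0|_\Lambda = \sum_{i=0}^m a_{2i}x_i^2, \qquad q_1|_\Lambda = \sum_{i=0}^m a_{2i+1}x_i^2.
\]
Since $\bbk$ is perfect of characteristic $2$, Frobenius is bijective on $\bbk$, so $a_j = b_j^2$ for some $b_j\in\bbk$, and each restriction is the square of a $\bbk$-linear form. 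Hence $L := \Lambda\cap X$ is defined, as a reduced subscheme, by the two $\bbk$-linear equations $\sum b_{2i}x_i = 0$ and $\sum b_{2i+1}x_i = 0$ in $\Lambda$, so $L$ is a $\bbk$-rational projective linear subspace of $X$ of dimension at least $m-2$.

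For $\dim X = 2$ (i.e.\ $m=2$), $L$ supplies a $\bbk$-rational point on the quartic del Pezzo surface $X$, and unirationality then follows from the classical theorem that a smooth quartic del Pezzo surface over a perfect field with a $\bbk$-point is $\bbk$-unirational; one route is to blow up the point to obtain a cubic del Pezzo surface with a rational point and to apply the Segre--Manin unirationality theorem.

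For $\dim X > 2$ (i.e.\ $m\ge 3$) we have $\dim L \ge 1$, so $L$ contains a $\bbk$-rational line $\ell\subset X$, and I would prove rationality by projecting from $\ell$. For a general $q\in X\setminus\ell$, the plane $\Pi = \langle\ell, q\rangle$ meets each quadric $Q_i$ of the pencil in a conic containing $\ell$, hence in a reducible conic $\ell\cup\ell_i$ for some line $\ell_i\subset\Pi$; therefore $\Pi\cap X = \ell \cup \{\ell_0\cap\ell_1\}$ scheme-theoretically, with the residual reduced point being exactly $q$. Thus the generic fibre of the projection $\pi\colon X\dashrightarrow\bbP^{2m-2}$ is a single reduced $\bbk$-point, so $\pi$ is birational and $X$ is $\bbk$-rational.

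The main obstacles I anticipate are: (i) pinning down a clean characteristic-$2$ reference for the unirationality of a smooth quartic del Pezzo surface with a $\bbk$-rational point, since many classical statements are phrased under the assumption $\operatorname{char}\bbk\neq 2$ and may need adaptation; and (ii) the routine but necessary transversality checks in the projection argument, namely that for generic $q$ one has $\Pi\not\subset Q_i$ and $\ell_0\neq\ell_1$. Both transversality statements will follow from the irreducibility of the $Q_i$ and the smoothness of $X$, using that the Fano scheme of lines in $X$ through a point of $\ell$ has strictly smaller dimension than the parameter space of $q$.
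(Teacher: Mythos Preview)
Your construction of the $\bbk$-rational linear subspace $L\subset X$ coincides with the paper's canonical $(m-2)$-plane $\Pi$ of Theorem~\ref{thm:canPlane}, and the surface case is treated identically (the paper simply cites Theorem~3.5.1 of \cite{MT}, which is stated for all characteristics, for the unirationality step). The two proofs diverge for $m\ge 3$: the paper projects $X$ from the \emph{full} $(m-2)$-plane $\Pi$ to $\bbP^{m+1}$ and, after a fibre calculation that invokes the existence of a generator over $\bar\bbk$ (Theorem~\ref{thm:genExists}), identifies $X$ birationally with a $\bbP^{m-3}$-bundle over $\bbP^{m+1}$; you instead take only a line $\ell\subset L$ and project to $\bbP^{2m-2}$, the classical move for an intersection of two quadrics containing a line. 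Your route is shorter and avoids the bundle analysis, at the price of the transversality checks you flag. The substantive one is (ii), that $\langle\ell,q\rangle$ lies in no quadric of the pencil for generic $q\in X$: the bad locus is cut out on $X$ by the quadric $R(v)=b_0(p_1,v)\,b_1(p_2,v)+b_0(p_2,v)\,b_1(p_1,v)$ (with $p_1,p_2$ spanning $\ell$), so one needs $X\not\subset V(R)$. This holds because the polar bilinear form of $R$ has rank at most $4$, while every nonzero member of the pencil has polar form of rank $2m\ge 6$ by Corollary~\ref{cor:corank}; hence $R$ is not a $\bbk$-linear combination of $q_0,q_1$, and since any quadric containing the complete intersection $X$ must lie in that pencil, $V(R)\not\supset X$.
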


(Note that the unirationality of del Pezzo surfaces of degree $4$
is already known for finite fields, see \cite{MT}.)

Following \cite{Reid}, a \emph{generator} is a linear subspace of $X$
of dimension $m$.
Over the algebraic closure, there are exactly $2^{2m}$ generators.
Following \cite{Skor}, one says $X$ is \emph{quasi-split} if it
contains a generator, and $X$ is \emph{split} if all $2^{2m}$ generators
are defined over the base field.

\begin{thm} \label{thm:geometricRis0}
A regular pair $(q_0,q_1)$ has quasi-split base locus $X$
if and only if the pair has a normal form such that
$r_0=\cdots=r_{n-2}=0$.
In particular, this is always possible when $\bbk$ is algebraically
closed.
\end{thm}

If $a_n \ne 0$, we assign to the coefficients $(r_0, \ldots, r_{n-2})$
an element $r$ of the $\bbk$-algebra $A=\bbk[T]/(f(T))$ where $f(T)$ is the
dehomogenization of $\Delta$ with respect to $T=t_1/t_0$.
This element, which we call the \emph{$r$-invariant}, determines the
normal form (it can also be defined when $a_n = 0$, see
Remark~\ref{rem:technicalNuisance}).
An \emph{isomorphism} between pairs $(q_0,q_1)$, $(q'_0,q'_1)$ of
quadratic forms is an element $g \in \GL(E)$ such that
\[
q'_0(v) = q_0(gv) \textrm{ and } q'_1(v) = q_1(gv)
\]
for all $v \in E$.

\begin{thm} \label{thm:OtherNormalForms}
Suppose two regular pairs $(q_0,q_1)$ and $(q_0',q_1')$
have the same half-discriminant polynomial and
have normal forms with invariants $r$ and $r'$, respectively.
The pairs are isomorphic if and only if 
\[ r \equiv r' \mod \bbk + \wp(A) \]
where $\wp(s) = s^2+s$ is the Artin-Schreier map $A\to A$.
\end{thm}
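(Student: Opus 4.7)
The plan is to analyze, for a fixed half-discriminant, the subgroup $H \subset \GL(E)$ of transformations that send one normal form to another normal form (necessarily with the same $a_i$'s) and to compute its action on the $r$-invariants. Fixing a pair $(q_0, q_1)$ in normal form, the $H$-orbit of $(r_0, \ldots, r_{n-2})$ collects all $r$-invariants arising from normal forms of pairs isomorphic to $(q_0, q_1)$, so the theorem amounts to showing this orbit equals $r + \bbk + \wp(A)$.

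The polar forms $b_j(v,w) = q_j(v+w) + q_j(v) + q_j(w)$, intrinsic to the pair, take in the normal basis the rigid Kronecker--Weierstrass shape $b_0 = \sum x_{i+1} \wedge y_i$, $b_1 = \sum x_i \wedge y_i$, with radicals $\bbk x_0$ and $\bbk x_m$ respectively. By the classification of alternating pencils \cite{LS99}, this bilinear structure tightly constrains $H$: up to scalings, $H$ is generated by substitutions $y_i \mapsto y_i + s_i$ (with $s_i$ in an appropriate $x$-subspace) together with the compensating changes in the $x_j$ required to restore the bilinear normal form. For each such admissible substitution the characteristic-$2$ identity $q(v+s) = q(v) + b(v,s) + q(s)$ produces a linear contribution $b(y_i, s_i)$ and a squared contribution $q(s_i)$ to the $y_i^2$-coefficients. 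Packaging the $r_i$'s as $r = \sum r_i T^i \in A$ and identifying a suitable subquotient of $E$ with a free $A$-module of rank one on which multiplication by $T$ is realized via the bilinear chain $x_0 \to y_0 \to x_1 \to \cdots \to x_m$, the net effect on the invariant reads as $r \mapsto r + s + s^2 = r + \wp(s)$ for the element $s \in A$ corresponding to $(s_i)$. A separate distinguished one-parameter family in $H$ supplies the $\bbk$-summand, realized by a specific transformation that shifts the constant part of $r$ without disturbing the bilinear data or any $a_i$.

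For the converse, each pair $(c,s) \in \bbk \times A$ yields, by reversing the $A$-module identification, an explicit element of $H$ realizing $r \mapsto r + c + \wp(s)$. The main obstacle is pinning down the $A$-module identification itself: one must verify that the bilinear data of the normal form canonically equip a subquotient of $E$ with a free rank-one $A$-module structure on which the bilinear shifts implement multiplication by $T$, and that the induced $H$-action on $(r_0, \ldots, r_{n-2})$ is exactly computed by the Artin--Schreier map under this identification. Once this is established, the appearance of $\wp(s) = s^2 + s$ is forced by the characteristic-$2$ expansion of $q(v+s)$; isolating the extra $\bbk$-summand --- a degree of freedom not captured by $\wp(A)$ alone --- is the most delicate piece of the analysis.
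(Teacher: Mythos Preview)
Your high-level strategy is right: compute the group $H=\Aut(b_0,b_1)$ of transformations preserving the Kronecker bilinear data, then determine its action on the $r$-coefficients. But several of the concrete claims are incorrect, and the key computation is missing.

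First, your description of $H$ is off. The automorphisms do not involve ``compensating changes in the $x_j$'' or ``scalings'': every $g\in\Aut(b_0,b_1)$ fixes each $w_i$ and sends $v_i\mapsto v_i+\sum_{k=0}^m s_{i+k}w_k$, where the shifts are governed by a \emph{catalecticant} constraint (the $(i,k)$-entry depends only on $i+k$). This structural fact, not a vague ``appropriate $x$-subspace,'' is what makes the identification with $A/\bbk$ possible.

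Second, your packaging $r=\sum r_iT^i$ is the wrong identification. The paper defines $r=\sum_{i=0}^{n-2} r_i d_i$ in the basis $d_i=a_{i+1}+a_{i+2}t+\cdots+a_nt^{n-1-i}$ of $A$, and the entire Artin--Schreier computation hinges on a nontrivial lemma computing \emph{squaring in this basis}: if $(\sum s_jd_j)^2=\sum r_kd_k$ then $r_k=\sum_j s_j^2 a_{2j+1-k}$. This formula (proved via a derivative of Euler's partial-fractions identity, using characteristic $2$ essentially) is precisely what matches the quadratic contribution $q_0(\sum s_{i+k}w_k)=\sum s_{i+k}^2a_{2k}$ to the $d_i$-coordinates of $s^2$. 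In the power basis $T^i$ no such clean match occurs, and your asserted $A$-module structure on a subquotient of $E$ realizing ``multiplication by $T$'' via the bilinear chain is not set up in the paper and would need to be constructed and verified from scratch.

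Third, the $\bbk$-summand does not come from ``a separate distinguished one-parameter family in $H$.'' It arises because the map $\phi:A\to\Aut(b_0,b_1)$ has kernel $\bbk=\langle d_{n-1}\rangle$ and, dually, the $r$-invariant records only the $d_0,\ldots,d_{n-2}$ coefficients; thus $r'\equiv r+\wp(s)\bmod\bbk$ is a projection, not an additive shift realized by an extra transformation.
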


Denote by $\Aut(q_0,q_1)$ be the group of automorphisms of the pair
$(q_0,q_1)$.
When $X$ is split,
the group of automorphisms $\Aut(q_0,q_1)$ of a regular pair $(q_0,q_1)$
is isomorphic to an elementary abelian $2$-group of rank $2m$,
which acts simply transitively on the set of generators of $X$.
In general, the set of generators can be viewed as an
$\Aut(q_0,q_1)$-torsor.

\begin{thm} \label{thm:GaloisCohomology}
Let $\wp(s) = s^2+s$ be the Artin-Schreier map $A\to A$.
There is an isomorphism between
the algebra $A/(\wp(A)+\bbk)$ and
the Galois cohomology group $H^1(\bbk,\Aut(q_0,q_1))$,
which takes the $r$-invariant to the
class of the $\Aut(q_0,q_1)$-torsor of generators.
\end{thm}

This theorem provides a Galois-cohomological interpretation of
Theorems~\ref{thm:geometricRis0} and \ref{thm:OtherNormalForms}
in the vein of \cite{Skor}.
Namely, the isomorphism class of a regular pair is determined by its
half-discriminant polynomial $\Delta$ and its torsor of generators.
Similarly, the base locus $X$ is determined up to isomorphism by
a smooth subscheme $V(\Delta)$ of $\bbP^1$
and the torsor of generators;
the variety $X$ is quasi-split if and only if the torsor is trivial.

One of the more concrete motivations of this paper is to study the
possible groups of automorphisms of a quartic del Pezzo surface over an
algebraically closed field of characteristic 2
(in order to study the conjugacy classes of finite
subgroups of the plane Cremona group over fields of positive
characteristic).

Recall that a \emph{reflection} is an involution of a vector space (or
a projective space) which leaves pointwise-fixed a hyperplane.
As a corollary of our main results about the classification of pairs of
quadratic forms we obtain the following.

\begin{thm} \label{thm:autos}
Let $X=V(q_0,q_1)$ be a smooth complete intersection of two quadrics
in $\bbP^{2m}$ over an algebraically closed field $\bbk$ of
characteristic $2$.
Then:
\[ \Aut(X) = \Aut(q_0,q_1) \rtimes G \]
where $\Aut(q_0,q_1)$ is generated by reflections
in canonical bijection with the points of $V(\Delta)$
and $G$ is isomorphic to the subgroup of $\Aut(\bbP^1)$ which leaves
invariant the points $V(\Delta)$.
\end{thm}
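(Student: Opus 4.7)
The plan is to realize $\Aut(X)$ inside $\PGL(E)$, analyze the induced action on the pencil of quadrics containing $X$, and compute the kernel explicitly in the normal form of Theorem~\ref{thm:geometricRis0}. Any automorphism of $X$ preserves $\calO_X(1)$ (as $-K_X$ for $m=2$, via Lefschetz for $m\ge 3$, and directly for $m=1$), so acts linearly on $H^0(X,\calO_X(1))=E^\vee$, embedding $\Aut(X)\hookrightarrow\PGL(E)$. The image preserves the pencil $|\calI_X(2)|$, giving $\rho:\Aut(X)\to G$ with kernel $R$ consisting of elements fixing every quadric in the pencil; after rescaling in $\GL(E)$ (possible since $\bbk$ is algebraically closed), such an element preserves both $q_0$ and $q_1$ on the nose.

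Next I would produce the reflections. In the normal form with $r=0$, the vertex of the singular quadric $Q_\lambda$ at a root $\lambda$ of $\Delta(1,t)$ is $w_\lambda=(1,\lambda,\ldots,\lambda^m,0,\ldots,0)$, and the linear form $\phi_\lambda=\sum_{j=0}^{m-1}\lambda^jy_j$ satisfies $b_0(\cdot,w_\lambda)=\lambda\phi_\lambda$ and $b_1(\cdot,w_\lambda)=\phi_\lambda$. A direct computation shows that the involution $\sigma_\lambda(v):=v+c_\lambda\phi_\lambda(v)w_\lambda$ with $c_\lambda=1/\Delta'(\lambda)$ preserves both $q_0$ and $q_1$; the normalization follows from the identities $q_0(w_\lambda)=\lambda q_1(w_\lambda)=\lambda\Delta'(\lambda)$, using that in characteristic $2$ the derivative $\Delta'(t)=\sum_j a_{2j+1}t^{2j}$ coincides with $q_1$ evaluated at $w_\lambda$. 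Each $\sigma_\lambda$ thus defines an element of $R$.

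The core of the argument is showing $R=\langle\sigma_\lambda\rangle$ is elementary abelian of rank exactly $2m$. Since every $w_\mu$ has zero $y$-coordinate while $\phi_\lambda$ involves only $y$'s, $\phi_\lambda(w_\mu)=0$ for all $\lambda,\mu$, so the nilpotent pieces $T_\lambda:=\sigma_\lambda-1$ satisfy $T_\lambda T_\mu=0$. Hence the $\sigma_\lambda$ commute, and products take the form $1+\sum_{\lambda\in S}T_\lambda$. Writing $T_\lambda$ as the block matrix from the $y$-subspace to the $x$-subspace with $(i,j)$-entry $\lambda^{i+j}/\Delta'(\lambda)$, the relation $\sum_{\lambda\in S}T_\lambda=0$ reduces to $\sum_{\lambda\in S}\lambda^k/\Delta'(\lambda)=0$ for $k=0,\ldots,2m-1$. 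The partial fraction expansion of $t^k/\Delta(t)$ at infinity (using $\deg\Delta=2m+1$) produces the single relation $\prod_\lambda\sigma_\lambda=1$, while a Vandermonde argument excludes relations on proper subsets. For the reverse inclusion $R\subseteq\langle\sigma_\lambda\rangle$: any $g\in R$ fixes each $[w_\lambda]$, and since the $2m+1\ge m+2$ vertices lying on the degree-$m$ rational normal curve in the projective $x$-subspace are in general linear position, $g$ is projectively trivial there. Reducing to block form and imposing invariance of $q_0,q_1$ forces the off-diagonal block to be a Hankel matrix (constant on antidiagonals) parametrized by $2m$ scalars satisfying $2m$ Artin–Schreier-type equations; solving these in sequence yields exactly $2^{2m}$ solutions, matching $|\langle\sigma_\lambda\rangle|$.

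Finally, for surjectivity of $\rho$ and splitting: each $g\in G$ pulls $(q_0,q_1)$ back to a pair with the same half-discriminant and vanishing $r$-invariant, hence by Theorem~\ref{thm:geometricRis0} lifts to some $\tilde g\in\PGL(E)$. A canonical lift permutes the $w_\lambda$ and $\phi_\lambda$ according to $g$ and preserves the $x$/$y$ decomposition; an appropriate scalar normalization makes $g\mapsto\tilde g$ a group homomorphism splitting $\rho$. The principal obstacle is the reverse inclusion in the previous paragraph: solving the coupled Artin–Schreier system in characteristic $2$ requires careful bookkeeping, and the resulting count of $2^{2m}$ solutions must be matched against the reflection subgroup. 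A secondary subtlety is arranging the canonical lifts so that they form a genuine subgroup rather than merely a set-theoretic section.
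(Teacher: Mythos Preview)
Your overall architecture matches the paper's: embed $\Aut(X)$ in $\PGL(E)$ (the paper's Lemma~\ref{lem:Fano}), map to $\PGL(U)$ with kernel $\Aut(q_0,q_1)$, construct the reflections explicitly, and lift elements of $G$ using the normal form with $r=0$ (the paper's Theorem~\ref{thm:autosPlus} and Lemma~\ref{lem:changePair}). Your direct verification that the $\sigma_\lambda$ commute via $T_\lambda T_\mu=0$ and your Vandermonde/partial-fraction argument for the single relation $\prod_\lambda\sigma_\lambda=1$ are correct and pleasantly concrete; the paper obtains the same facts by identifying $\sigma_\lambda$ with the orthogonal idempotent $\epsilon_\lambda\in A$ and invoking $\sum\epsilon_\lambda=1$.

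The genuine gap is the reverse inclusion $R\subseteq\langle\sigma_\lambda\rangle$. After you reduce $g\in R$ to block form with Hankel off-diagonal block $(s_{i+j})$, the $2m$ constraints you obtain are
\[
s_{2i}=\sum_{k=0}^m s_{i+k}^2\,a_{2k+1},\qquad
s_{2i+1}=\sum_{k=0}^m s_{i+k}^2\,a_{2k},\qquad i=0,\ldots,m-1,
\]
and these are \emph{not} solvable ``in sequence'': every equation involves the squares of $m+1$ consecutive unknowns, so no ordering makes the system triangular, and there is no step at which you get a single Artin--Schreier equation in one variable contributing a factor of $2$. The paper circumvents this by packaging the Hankel parameters as an element $s=\sum s_id_i$ of the \'etale algebra $A=\bbk[T]/(f(T))$ (your $f=\Delta(1,\cdot)$); Lemma~\ref{lem:BbasisSquare} shows the system above is exactly the coordinate expression of $s^2+s\in\bbk$. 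Over algebraically closed $\bbk$ one then reads off the solution set as $\wp^{-1}(\bbk)/\bbk\cong\Idem(A)/\langle 1\rangle\cong(\bbZ/2\bbZ)^{2m}$, which is Theorem~\ref{thm:autIdem}. In other words, the coupled system decouples only after passing to the idempotent basis of $A$, where it becomes $\sigma_i^2+\sigma_i=c$ (same $c$ for all $i$); fixing one coordinate kills the $\bbk$-ambiguity and leaves $2^{2m}$ solutions. Once you insert this algebraic step your argument goes through; without it the bound $|R|\le 2^{2m}$ is unproved. Your secondary worry about the section $G\to\Aut(X)$ is handled in the paper by the explicit functorial lift $h=\bigl((S^mg^\vee)^\vee,\,c^{-1}S^{m-1}(g^\vee)^{-1}\bigr)$ of Lemma~\ref{lem:changePair}, which visibly respects composition.
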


This extends to characteristic 2 a classical result on automorphisms of
quartic del Pezzo surfaces (see \cite{CAG}).
There is a natural action of the Weyl group
$W(D_n) \cong 2^{2m} \rtimes \frakS_n$
on the cohomology classes of the generators.
The automorphism group $\Aut(X)$ is naturally a subgroup of
the Weyl group $W(D_n)$ where $\Aut(q_0,q_1) \cong 2^{2m}$
and $G \subset \frakS_n$.
Thus, as with quartic del Pezzo surfaces,
the automorphism group is completely determined by its action on the
generators.

In writing this paper, the authors tried to resolve a tension between
two likely audiences: geometers who only work over
algebraically closed fields, and algebraists without a strong background
in geometry.
Hopefully, the paper will appeal to both audiences
and we occasionally supply multiple proofs to facilitate this.
For example, while Theorem~\ref{thm:geometricRis0} is an immediate
consequence of Theorem~\ref{thm:OtherNormalForms},
we provide a more direct geometric proof in Section~\ref{sec:quasiSplit}.
Also, while Theorem~\ref{thm:OtherNormalForms} is used in the proof
of Theorem~\ref{thm:autos}, a geometric description of the reflection
group $R$ can be found in Section~\ref{sec:geomAutos}.

The remainder of the paper is structured as follows.
In Sections \ref{sec:prelims} and \ref{sec:pencils} we establish some
preliminaries on pencils of quadratic forms in characteristic $2$.
In Section~\ref{sec:normalForms}, we prove Theorem~\ref{thm:NormalForm},
Corollary~\ref{cor:Xrational}, and Theorem~\ref{thm:geometricRis0}.
In Section~\ref{sec:NormalFormIsos}, we define the $r$-invariant and prove
Theorem~\ref{thm:OtherNormalForms}.
In Section~\ref{sec:arf}, we discuss possible interpretations for what
the $r$-invariant represents.
In Section~\ref{sec:autos}, we determine the automorphism group of a
pair of quadratic forms and use this to prove
Theorems~\ref{thm:GaloisCohomology}~and~\ref{thm:autos}.
Finally, in Section~\ref{sec:cohomology}, we make some observations
about the cohomology of smooth intersections of smooth quadrics
extending results from  M.~Reid's thesis \cite{Reid} to the case where
the characteristic is $2$.

\section{Preliminaries on quadratic forms}
\label{sec:prelims}

Throughout $\bbk$ is a field of characteristic $2$
and $E$ is a $k$-vector space of dimension $n$.

\subsection{Alternating bilinear forms}

Recall that a bilinear form $b : E \times E \to \bbk$ is
\emph{alternating} if $b(v,v)=0$ for all $v \in E$.
We may view $b$ as an element in $\bigwedge^2 E^\vee$.
The \emph{radical} $\rad(b)$ of an alternating bilinear form $b$ is
the kernel of the induced map $E \to E^\vee$.
A form $b$ is \emph{nondegenerate} if its radical is trivial.
The \emph{corank} of a bilinear form is $\dim(\rad(b))$.
Every alternating bilinear form is an orthogonal sum of its radical and
a nondegenerate alternating bilinear form.

Every nondegenerate alternating bilinear form has even dimension $n=2m$
and has a \emph{symplectic basis} $v_1,\ldots,v_m,w_1,\ldots,w_m$
satisfying the relations
\[
\begin{array}{rcl c rcl c rcl}
b(w_i,w_j)&=&0, & &
b(v_i,v_j)&=&0, & &
b(w_i,v_j)&=&\delta_{ij},
\end{array}
\]
for $1 \le i,j \le m$.
A subspace $W$ of $b$ is \emph{totally isotropic} if
$b(v,w)=0$ for all $v,w \in W$.
Note that $v_1,\ldots,v_m$ and $w_1,\ldots,w_m$ are bases for
complementary totally isotropic subspaces of $E$.
Conversely,
for any pair of complementary totally isotropic subspaces of $E$,
there exists a basis for each subspace such that the union is a
symplectic basis for $E$.

When $n$ is even (resp. odd), $\corank(b)$ is even (resp. odd).
Thus, an alternating bilinear form $b$ on a vector space of odd
dimension $n=2m+1$ is always degenerate.

\begin{prop} \label{prop:Omega}
Let $b$ be an alternating bilinear form 
on a vector space $E$ of odd dimension $n=2m+1$.
Up to a choice of volume form for $E$,
there is a canonical vector $\omega \in E$
which spans $\rad(b)$ if $\corank(b)=1$ and is $0$ otherwise.
Choosing a basis of $e_1,\ldots,e_n$ in $E$ such that
$e_1 \wedge \cdots \wedge e_n \mapsto 1$ under the volume form,
we have 
\begin{equation}\label{eq:pfafcoord}
\omega = (\Pf_1,\ldots,\Pf_n),
\end{equation}
where $\Pf_i$ is the Pfaffian of the principal submatrix of the matrix
of $b$ obtained by deleting the $i$th row and the $i$th column.
\end{prop}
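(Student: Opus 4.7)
The plan is to give a basis-free construction of $\omega$ and verify the claimed properties using one standard identity for bordered Pfaffians.

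To construct $\omega$ invariantly, I use the fact that the element $\tfrac{1}{m!}\,b\wedge\cdots\wedge b \in \bigwedge^{2m}E^\vee$, a priori only meaningful in characteristic zero, has integer coefficients in any basis: the coefficient of $e^\vee_{i_1}\wedge\cdots\wedge e^\vee_{i_{2m}}$ is precisely the Pfaffian of the alternating $2m\times 2m$ principal submatrix of $B=(b(e_i,e_j))$ indexed by those $2m$ indices. Thus there is a canonical polynomial map $\bigwedge^2 E^\vee \to \bigwedge^{2m}E^\vee$ defined over $\bbZ$, and hence over $\bbk$. A volume form $\nu\in\bigwedge^n E$ furnishes an isomorphism $\bigwedge^{n-1}E^\vee \cong E$ by contraction, and I define $\omega$ as the image of $b$ under this composition. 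Rescaling $\nu$ by $\lambda$ rescales $\omega$ by $\lambda$, accounting for the stated ambiguity; taking $\nu=e_1\wedge\cdots\wedge e_n$ yields \eqref{eq:pfafcoord}.

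Next I verify $\omega\in\rad(b)$. For each $j$, form the alternating $(n+1)\times(n+1)$ matrix $\tilde B$ obtained from $B$ by appending the $j$th column (and the matching row below, with a zero in the corner). Since $B_{jj}=0$, the $j$th and $(n+1)$th columns of $\tilde B$ coincide, so $\det \tilde B=0$ and hence $\Pf \tilde B=0$. The standard Pfaffian row expansion reads $\Pf \tilde B = \sum_i \pm B_{ji}\Pf_i$, which in characteristic $2$ is simply $(B\omega)_j$; varying $j$ yields $B\omega=0$.

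It remains to determine when $\omega$ vanishes. If $\corank(b)\ge 3$ then $\rank B \le n-3$, so every $(n-1)\times(n-1)$ minor of $B$ is zero; in particular each $\Pf_i^2 = \det \bar B_i = 0$, so $\omega=0$. Conversely, if $\corank(b)=1$, fix a generator $v$ of $\rad(b)$ and an index $i_0$ with $v_{i_0}\ne 0$. I claim the principal submatrix $\bar B$ obtained by deleting row and column $i_0$ is invertible: given $\bar w \in \ker \bar B$, extend by inserting $0$ at position $i_0$ to get $w\in E$; then $Bw$ is supported only at position $i_0$, and pairing against $v$ using $Bv=0$ gives $0 = v^t B w = v_{i_0}(Bw)_{i_0}$, forcing $Bw=0$, so $w\in\bbk v$, and then $w_{i_0}=0\ne v_{i_0}$ forces $w=0$. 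Hence $\Pf_{i_0}^2 = \det \bar B \ne 0$, so $\omega\ne 0$ and spans the one-dimensional radical.

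The conceptual point requiring care is the invariant construction: one cannot literally write $\tfrac{1}{m!}b^{\wedge m}$ in characteristic $2$, so one must argue that this rational-coefficient exterior power descends to a polynomial map over $\bbZ$ whose coordinates are the principal Pfaffians. Once this is granted, the rest uses only $\Pf^2=\det$ and the standard Pfaffian row-expansion identity.
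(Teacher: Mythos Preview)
Your proof is correct and follows essentially the same approach as the paper: both construct $\omega$ invariantly as the image of the $m$th divided power $b^{(m)}\in\bigwedge^{2m}E^\vee$ (your ``polynomial map over $\bbZ$ with Pfaffian coordinates'' is precisely this divided power) under the volume-form isomorphism $\bigwedge^{2m}E^\vee\cong E$. The only difference is that the paper outsources the verification that $\omega$ spans the radical to a reference (\cite{Buchsbaum}), whereas you supply self-contained arguments via the bordered-Pfaffian expansion and a direct rank computation; your added detail is correct and useful.
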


\begin{proof}
It is obvious that such a vector exists, the point is to construct it
canonically and find an explicit formula for its coordinates.

Let us view $b \in \bigwedge^2E^\vee$ as an element of the divided power
algebra of $E^\vee$.
We may thus consider the $m$th divided power $b^{(m)} \in
\bigwedge^{2m}E^\vee$.
(If $\bbk$ was of characteristic zero, we would have the formula
$b^{(m)} = (b \wedge \cdots \wedge b)/m!$.)
Under the isomorphism $\bigwedge^{2m}E^\vee \cong E$
defined by the volume form, $b^{(m)}$ maps to
a vector $\omega$ in $E$. We have 
$\omega = \sum_{i=1}^n\Pf_ie_i$ in coordinates.
The rank of an alternating matrix is equal to the size of the
largest non-zero Pfaffian of a principal submatrix of even size.
If the rank is equal to $n-1$, then the
nullspace is generated by the vector \eqref{eq:pfafcoord}.
All of these well-known facts can be found in \cite{Buchsbaum}.
\end{proof}

Note that without a choice of volume form, $\omega$ is well-defined
up to a multiplicative constant.
Thus, the corresponding point in the projective space $|E|$
does not depend on this choice.

\subsection{Quadratic forms}

Recall that a quadratic form $q$ is an element of the symmetric square
$S^2(E^\vee)$ of the dual space $E^\vee$.
When $q\ne 0$, one obtains a quadric hypersurface $V(q)$ in
the projective space $|E|$.
The quadratic form is called \emph{nondegenerate} if $V(q)$ is a smooth
hypersurface.

Equivalently, $q$ can be viewed as a function $E \to \bbk$ such that
\begin{enumerate}
\item[(1)] $q(cv)=c^2q(v)$ for all $c \in \bbk, v \in E$, and
\item[(2)] $b(v,w)=q(v+w)-q(v)-q(w)$ is a symmetric bilinear form.
\end{enumerate}
The bilinear form $b$ is called the
\emph{associated polar bilinear form}
and, since $\cha(k)=2$, we observe that $b$ is alternating. 
The \emph{corank} of a quadratic form $q$ is simply the corank of the
associated bilinear form.

A non-zero vector $v \in E$ is called a \emph{singular vector of $q$}
if $q(v)=0$ and $v \in \rad(b)$.
In geometric language, this means the corresponding point $[v]=\bbk v$
in $|E|$ is a singular point of the quadric $V(q)$.

A quadratic form is \emph{totally singular} if its associated
bilinear form is trivial;
this is equivalent to $V(q)$ being singular at every
geometric point.
A quadratic form $q$ is diagonalizable if and only if it is totally
singular.
Moreover, if $q$ is diagonalizable, then it is diagonal relative to any basis.
A subspace $W$ of $E$ is \emph{totally isotropic}
(resp. \emph{totally singular}) with respect to $q$ if
the restricted form $q|_W$ is trivial (resp. totally singular).

The \emph{discriminant} $\disc(q)$ of a quadratic form is the
determinant of the matrix of the polar bilinear form $b$.
If $n$ is even, then a quadratic form is nondegenerate if and
only if $\disc(q) \ne 0$.
When $n$ is odd, $\disc(q)$ is always zero since the polar bilinear form
has odd corank; thus, we consider another invariant.

We define the \emph{half-discriminant} of $q$ as
\[
\hdisc(q): = q(\omega) 
\]
where $\omega$ is the canonical vector from Propositon~\ref{prop:Omega}.
In coordinates, if
\[
q = \sum_{1\le i\le j\le n}a_{ij}x_ix_j
\]
for a basis $x_1,\ldots,x_n$ for $E^\vee$, then
\begin{equation}\label{eq:discExpl}
\hdisc(q) = \sum_{1\le i\le j\le n}a_{ij}\Pf_i\Pf_j \ .
\end{equation}
Since each expression $\Pf_i$ is a homogeneous polynomial of degree $m$
in the coefficients $\{ a_{ij} \}$,
the half-discriminant is either zero or a homogeneous polynomial of degree
$n = 2m+1$ in the coefficients $\{ a_{ij} \}$. 
Recall that $\omega$ is only defined up to a multiplicative constant which
depends on the choice of volume form.
If we do not make such a choice, then $\hdisc(q)$ is only well-defined
modulo non-zero squares in $\bbk$.

\begin{prop} \label{prop:halfDet}
For $n$ odd, $\hdisc(q) \ne 0$ if and only if $V(q)$ is smooth.
\end{prop}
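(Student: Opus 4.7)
The plan is to reduce to a statement about singular vectors after base change to the algebraic closure, then split into cases based on the corank of the polar bilinear form $b$.

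First I would recall (or briefly verify) that $V(q)$ is smooth if and only if $q$ has no non-zero singular vectors over $\bar\bbk$, since smoothness of a projective variety over $\bbk$ can be checked on geometric points. By definition, the singular vectors of $q$ are precisely the non-zero $v \in \rad(b) \otimes \bar\bbk$ with $q(v)=0$, so the issue reduces to understanding $q$ restricted to $\rad(b)$. Since $n$ is odd, $\corank(b)$ is odd, so either $\corank(b)=1$ or $\corank(b) \ge 3$.

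In the case $\corank(b)=1$, Proposition~\ref{prop:Omega} tells us that $\omega$ (after a choice of volume form) spans $\rad(b)$, and in particular $\omega \ne 0$. Every vector of $\rad(b)\otimes\bar\bbk$ is then a scalar multiple of $\omega$, so a non-zero singular vector exists if and only if $q(\omega)=0$, i.e.\ $\hdisc(q)=0$. This settles one case in both directions.

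In the case $\corank(b)\ge 3$, Proposition~\ref{prop:Omega} gives $\omega=0$, so $\hdisc(q)=q(\omega)=0$ automatically. On the other hand $q|_{\rad(b)}$ is totally singular (its polar form is the restriction of $b$, which vanishes identically on $\rad(b)$). Over $\bar\bbk$ it is therefore diagonalizable, of the form $\sum c_i x_i^2$ on a space of dimension $\ge 3$, and hence has a non-trivial zero (indeed its zero locus is a hyperplane in $\rad(b)\otimes\bar\bbk$). Thus $V(q)$ is singular, consistent with $\hdisc(q)=0$. Combining both cases yields the equivalence.

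The only mildly subtle step is the appeal to Proposition~\ref{prop:Omega} for identifying $\omega$ with a generator of $\rad(b)$ exactly in the corank-one case; once that dichotomy is in hand, the rest is a direct check. I do not expect a real obstacle here, as the content of the proposition is essentially a restatement of the construction of $\omega$ combined with the elementary fact that a totally singular quadratic form on a space of dimension $\ge 2$ over an algebraically closed field has non-trivial zeros.
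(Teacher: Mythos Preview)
Your proof is correct and follows essentially the same approach as the paper: split on whether $\corank(b)=1$ or $\corank(b)\ge 3$, use Proposition~\ref{prop:Omega} to identify $\omega$ accordingly, and in the high-corank case observe that $q$ restricted to $\rad(b)$ must have a nontrivial zero. The paper's version is simply terser, saying only that ``the associated projective subspace intersects the quadric $Q=V(q)$'' where you spell out the totally singular diagonalization; the one minor quibble is that your parenthetical ``its zero locus is a hyperplane'' should allow for the possibility that $q|_{\rad(b)}$ vanishes identically, but this does not affect the argument.
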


\begin{proof}
If $\dim~\rad(b) > 1$, then the associated projective subspace
intersects the quadric $Q = V(q)$;
in this case $Q$ is not smooth, $\omega=0$, and $\hdisc(q)=0$.
Otherwise, $\omega$ spans $\rad(b)$
and $Q$ is singular if and only if $q(\omega)=0$.
\end{proof}

\begin{remark}
The half-discriminant was first by introduced by M.~Kneser.
Our definition is equivalent to that of \cite[p. 43]{Kneser}.
Indeed, both polynomials define the same reduced hypersurface in the
projective space of the coordinates $a_{ij}$ (they vanish precisely when
$V(q)$ is singular).
Their degrees are both equal to $n$, hence they differ by a constant factor.
It remains only to compare their values on a particular quadratic form
to see that they are equal.
\end{remark}

\begin{remark}
The half-discriminant is also known as
the \emph{corrected discriminant} (see Appendix~17.6~of~\cite{EGA}),
the \emph{half-determinant} (see \cite{LS02}),
or the \emph{determinant} (see Remark~13.8~of~\cite{EKM}).
\end{remark}

\begin{example} If $n  = 3$, we have
\[
\hdisc(q) = a_{11}a_{23}^2+a_{22}a_{13}^2+a_{33}a_{12}^2+a_{12}a_{23}a_{13}.
\]
The locus of zeros $V(\hdisc(q))$ in $|S^2E^\vee|\cong \bbP^5$ is a cubic
hypersurface singular along the  plane of double lines $a_{12} = a_{13}
= a_{23} = 0$.

If $n = 5$, then
\[
\hdisc(q) =
(a_{11}a_{23}^2a_{45}^2+\cdots)+
(a_{12}^2a_{34}a_{45}a_{35}+\cdots)+
(a_{12}a_{23}a_{34}a_{45}a_{15}+\cdots)
\ .
\]
\end{example}

\section{Pencils of quadratic forms}
\label{sec:pencils}

For the rest of the paper, we assume that the $\bbk$-vector space $E$
has dimension $n=2m+1$ for a positive integer $m$.

Let $U$ be a $2$-dimensional $\bbk$-vector space.
A \emph{pencil of quadratic forms} is an injective linear map
\[
\sfq : U \to S^2E^\vee.
\]
We use subscript notation $\sfq_u$ to denote the image of $u \in U$ in
order to avoid confusion with the interpretation of a quadratic form as
a map $E \to \bbk$.
We will also identify $\sfq$ with its image considering a pencil as a
two-dimensional linear subspace of $S^2E^\vee$.
The pencil $\sfq$ defines a $1$-dimensional linear system of
quadric hypersurfaces in $|E|$ (also called a pencil).
The corresponding rational map $|E|\dasharrow |U^\vee|$ has base
subscheme $\Bs(\sfq)$ equal to the intersection of quadrics $V(\sfq_u)$
over all $u \in U$.

We may view the map $\sfq$ as an element of $U^\vee\otimes S^2E^\vee$. 
Selecting a basis $(u_0, u_1)$ for $U$ with the dual basis $(t_0,t_1)$,
we can view $\sfq$ as an element $t_0q_0+t_1q_1\in U^\vee\otimes
S^2E^\vee$, where 
\[
q_0 = \sfq_{u_0}, \quad q_1 = \sfq_{u_1}
\]
define a pair of quadratic forms $(q_0,q_1)$ which form a basis of the pencil.
We have $\Bs(\sfq) = V(q_0,q_1) = V(q_0)\cap V(q_1)$. 
Conversely, a pair of non-proportional quadratic forms gives rise to a
pencil of quadratic forms $\sfq$ (along with a choice of basis for $U$).

Given a pencil of quadratic forms we obtain an 
\emph{associated pencil of alternating bilinear forms}
\[
\sfb : U \to \bigwedge^2E^\vee
\]
by setting $\sfb_u$ as the associated bilinear form of $\sfq_u$ for each
$u \in U$.
Note that, unlike $\sfq$, the map $\sfb$ may not be injective.
We may view $\sfb$ as an element of $U^\vee\otimes \bigwedge^2E^\vee$.

\subsection{The radical map and the radical subspace}

Fix a volume form on $E$.
For each alternating bilinear form $\sfb_u$ there exists a canonical
vector $\omega$ as in Proposition \ref{prop:Omega}.
Thus, we obtain a function
\begin{equation}
\label{eq:OmegaFunction}
\Omega : U \to E
\end{equation}
which is defined by polynomials of degree $m$ in view of
\eqref{eq:pfafcoord}.
Thus, abusing notation, we may view $\Omega$ as an element of
$E \otimes S^mU^\vee$.
We call the map \eqref{eq:OmegaFunction} the \emph{radical map} of the pencil.
As $u$ varies over $U$, the $\Omega(u)$ span a subspace $W$ of $E$
which we call the \emph{radical subspace} of $E$.
Equivalently, $\Omega$ gives rise to a $\bbk$-linear map
\begin{equation}
(S^mU^\vee)^\vee \to E
\end{equation}
whose image equals the radical subspace $W$.
(Note that we should not identify $(S^mU^\vee)^\vee$ and $S^mU$ in
characteristic $2$).

We define a function $\Delta : U \to \bbk$ via the composition
\[ \Delta(u) = \sfq_u(\Omega(u)) \]
for $u \in U$.
We call $\Delta$ the \emph{half-discriminant of the pencil $\sfq$} since
$\Delta(u)$ is the half-discriminant of each $\sfq_u$.
Since $\sfq$ can be viewed as a linear combination of quadratic forms with
linear coefficients in $U^\vee$
and $\Omega$ is defined by polynomials of degree $m$ in $U^\vee$,
$\Delta$ is an element of $S^n(U^\vee)$.
After choosing a basis $(t_0,t_1)$ in $U^\vee$, it can be identified
with a homogeneous polynomial of degree $n = 2m+1$ in $t_0,t_1$. 

\subsection{Regular pencils}
We say a pencil (or pair) is \emph{regular} if the base locus
$X=\Bs(\sfq)$ is a smooth variety over $\bbk$.

\begin{remark}
We borrowed the term regular from the classical terminology of regular
linear systems of quadric hypersurfaces.  It should not be confused with
the regularity of the base scheme $\Bs(\sfq)$ since, over a non-perfect
field $\Bbbk$, the latter could be a regular scheme but not smooth. 
\end{remark}

From Theorem~5.1~of~\cite{LS02}, we know that a pencil is regular
if and only if $\Delta$ is non-zero and has $n$ distinct linear factors
over the algebraic closure.  In other words:

\begin{thm} \label{thm:smooth}
A pencil $\sfq$ is regular if and only if,
over an algebraic closure of $\bbk$,
there are exactly $n$ degenerate quadrics in $|\sfq|$.
\end{thm}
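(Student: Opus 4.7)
My plan is to translate smoothness of $X = \Bs(\sfq)$ into a condition on the zero scheme of $\Delta$ by controlling the singular points of $X$. I may assume $\bbk$ is algebraically closed since both conditions are preserved by base change.

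The key reformulation is the Jacobian criterion: $p \in X$ is singular on $X$ if and only if $d\sfq_{u_0}(p)$ and $d\sfq_{u_1}(p)$ are linearly dependent, equivalently there exists $u \in U \setminus \{0\}$ with $\sfb_u(p, \cdot) = 0$, i.e.\ $p \in \rad(\sfb_u)$. Because $p \in X$ automatically gives $\sfq_u(p) = 0$, such a point is a singular point of $V(\sfq_u)$ and forces $\Delta(u) = 0$.

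I then analyze each root $u^*$ of $\Delta$ according to the corank of $\sfb_{u^*}$, which is necessarily odd. If $\corank \sfb_{u^*} \geq 3$ then the projective radical $|\rad(\sfb_{u^*})|$ has dimension at least $2$ and meets the codimension-two variety $X$ by dimension count, yielding a singular point; moreover $\Omega(u^*) = 0$ (all Pfaffians vanish), and each term of $\Delta$ in the form \eqref{eq:discExpl} is a product of two Pfaffians, so its derivative already vanishes at $u^*$ and $u^*$ is a multiple root of $\Delta$. If instead $\corank \sfb_{u^*} = 1$, then $\rad(\sfb_{u^*}) = \bbk \Omega(u^*)$ with $\Omega(u^*) \neq 0$, and I compute the first-order expansion
\[
\Delta(u^* + \varepsilon v) = \varepsilon\, \sfq_v(\Omega(u^*)) + O(\varepsilon^2),
\]
in which the naive cross-term $\sfb_{u^*}(\Omega(u^*), D_v\Omega(u^*))$ drops out because $\Omega(u^*) \in \rad(\sfb_{u^*})$. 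Hence $u^*$ is a multiple root of $\Delta$ if and only if $\sfq_v(\Omega(u^*)) = 0$ for every $v \in U$, equivalently $\Omega(u^*) \in \Bs(\sfq) = X$.

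Given this analysis, both directions fall out. If $X$ is smooth, no quadric in $|\sfq|$ can have corank $\geq 3$, so $\Omega$ is nowhere vanishing and in particular $\Delta \not\equiv 0$; and if $\Delta$ had a multiple root at some $u^*$ (or were identically zero), picking any $u^*$ with $\Omega(u^*) \neq 0$ at which $\Delta$ vanishes to order $\geq 2$ the expansion forces $\Omega(u^*) \in X \cap \rad(\sfb_{u^*})$, a singular point of $X$, a contradiction. Hence $\Delta$ has $n$ distinct roots. Conversely, if $X$ is singular at some $p$, choose $u$ with $p \in \rad(\sfb_u)$; then either $\corank \sfb_u \geq 3$, or $\corank \sfb_u = 1$ with $p = \Omega(u) \in X$, and in either case $u$ is a multiple root of $\Delta$.

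The main obstacle is the first-order expansion in the corank-$1$ case: one must expand $\Delta(u^*+\varepsilon v) = \sfq_{u^*+\varepsilon v}(\Omega(u^*+\varepsilon v))$ using $\sfq_u(a+b) = \sfq_u(a) + \sfq_u(b) + \sfb_u(a,b)$ in characteristic $2$ and verify that all terms except $\varepsilon\, \sfq_v(\Omega(u^*))$ are of order $\varepsilon^2$ or higher. This works cleanly because the only potential $\varepsilon^1$ contribution from bilinear cross-terms evaluates $\sfb_{u^*}(\Omega(u^*), \cdot) = 0$. The companion dimension count in the corank-$\geq 3$ case, and the Pfaffian-derivative argument for multiplicity, are routine by comparison.
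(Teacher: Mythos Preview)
Your proof is correct and takes a genuinely different route from the paper's. The paper actually offers two arguments: it first cites Theorem~5.1 of \cite{LS02}, and then gives an alternate geometric proof valid in all characteristics. That alternate proof sets up the incidence variety $\tilde{\sfD} = \{(q,v) : [v] \in \Sing(V(q))\}$ inside $S^2E^\vee \times E$, computes the differential of the projection to $S^2E^\vee$, and deduces that the tangent space to the discriminant hypersurface $\sfD$ at a corank-$1$ quadric $q_0$ with singular vector $v_0$ is $\{q : q(v_0) = 0\}$ (Lemma~\ref{lem:reg}). Transversality of the pencil line with $|\sfD|$ then becomes the condition $v_0 \notin X$, and the two directions follow.

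You arrive at the same transversality criterion by a more direct computation: expanding $\Delta(u^* + \varepsilon v) = \sfq_{u^* + \varepsilon v}(\Omega(u^* + \varepsilon v))$ over the dual numbers and noting that the only surviving first-order term is $\varepsilon\,\sfq_v(\Omega(u^*))$, since $\sfb_{u^*}(\Omega(u^*),\cdot) = 0$ kills the bilinear cross-term. This is more elementary and avoids both the incidence variety and the global tangent-space lemma, exploiting instead the structural identity $\Delta(u) = \sfq_u(\Omega(u))$. The paper's approach buys a characteristic-free statement and isolates Lemma~\ref{lem:reg} as an independent fact about the discriminant locus; yours is shorter and stays entirely within the half-discriminant formalism already set up. One small expository point: the implication ``$\Omega$ is nowhere vanishing and in particular $\Delta \not\equiv 0$'' is not justified by what precedes it, but your parenthetical ``(or were identically zero)'' in the next clause handles that case via the same expansion argument, so there is no actual gap.
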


We will need the following consequence of this theorem:

\begin{corollary} \label{cor:corank}
If $\sfq$ is a regular pencil, then every
non-zero quadratic form in the pencil has corank $1$.
\end{corollary}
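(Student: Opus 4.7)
The plan is to contradict Theorem~\ref{thm:smooth} by showing that if some non-zero form $\sfq_u$ in the pencil has corank $\ge 3$, then the linear form on $U$ corresponding to $u$ must divide $\Delta$ at least twice. Once this is established, a regular pencil cannot harbor such a $\sfq_u$, and every non-zero form in the pencil has corank exactly $1$ (recall that the corank is the corank of the alternating form $\sfb_u$ on an odd-dimensional space, hence automatically odd).

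First I would reduce to the case $u=u_0$: choose a basis $(u_0,u_1)$ of $U$ with $u_0=u$, take the dual basis $(t_0,t_1)$, and set $q_i=\sfq_{u_i}$, $b_i=\sfb_{u_i}$, so that $\sfq = t_0q_0+t_1q_1$. The corank hypothesis says $b_0$ has rank at most $n-3$, so every $(n-1)\times(n-1)$ principal Pfaffian of $b_0$ vanishes. By the explicit formula~\eqref{eq:pfafcoord}, this says $\Omega(u_0)=0$ in $E$.

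Next I would exploit homogeneity. Since $\Omega\in E\otimes S^mU^\vee$, expand
\[
\Omega(t_0,t_1)=\sum_{i+j=m}t_0^it_1^j\,w_{ij}\quad\text{with } w_{ij}\in E.
\]
The vanishing $\Omega(u_0)=0$ is exactly the statement $w_{m,0}=0$, so every surviving monomial carries a factor of $t_1$; hence $\Omega=t_1\,\tilde\Omega$ for some $\tilde\Omega$ polynomial of degree $m-1$. Applying the identity $q(cv)=c^2q(v)$ componentwise with $c=t_1\in\bbk[t_0,t_1]$, I then get
\[
\Delta(t_0,t_1)=(t_0q_0+t_1q_1)\bigl(t_1\tilde\Omega(t_0,t_1)\bigr)=t_1^2\,(t_0q_0+t_1q_1)\bigl(\tilde\Omega(t_0,t_1)\bigr),
\]
so $t_1^2\mid\Delta$. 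Thus the point $(1{:}0)$ in $\bbP^1$ is a zero of $\Delta$ of multiplicity at least $2$, contradicting the conclusion of Theorem~\ref{thm:smooth} that $\Delta$ must have $n$ distinct linear factors over $\bar\bbk$.

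The one step to watch is the upgrade from the immediate equation $\Delta(u_0)=\sfq_{u_0}(\Omega(u_0))=0$, which only gives a simple root, to the full $t_1^2\mid\Delta$. That upgrade is precisely what the vanishing of the vector $\Omega(u_0)$ — rather than the weaker statement that $\sfq_{u_0}$ merely kills $\Omega(u_0)$ — delivers, via quadratic homogeneity in the extracted factor of $t_1$.
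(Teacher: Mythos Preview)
Your argument is correct and follows a genuinely different route from the paper's. The paper argues geometrically and does not actually invoke Theorem~\ref{thm:smooth}: it observes that if $\rad(\sfb_u)$ has dimension $\ge 3$, then the projective plane $|\rad(\sfb_u)|$ meets the codimension-$2$ base locus $X$ in some point $x$, which is then a singular point of $V(\sfq_u)$; but smoothness of $X$ forces both quadrics through $x$ to be smooth there (a tangent-space count), a contradiction. Your argument is algebraic: from $\corank\ge 3$ you deduce $\Omega(u_0)=0$ via Proposition~\ref{prop:Omega}, factor $t_1$ out of $\Omega$, and then use the quadratic homogeneity $q(cv)=c^2q(v)$ to get $t_1^2\mid\Delta$, contradicting the separability of $\Delta$ furnished by Theorem~\ref{thm:smooth}.

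Each approach has its virtue. The paper's proof is self-contained from the definition of regularity and needs nothing about $\Delta$; yours is cleaner computationally, makes transparent why the canonical vector $\omega$ governs the discriminant, and in fact proves a slightly sharper intermediate statement (that a corank-$\ge 3$ member forces a repeated factor of $\Delta$, not merely a singular point on $X$), at the cost of relying on the equivalence in Theorem~\ref{thm:smooth}. Your final paragraph correctly isolates the crux: the mere vanishing $\Delta(u_0)=0$ gives only a simple root, and it is the stronger input $\Omega(u_0)=0$ that buys the square factor.
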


\begin{proof}
It suffices to assume $k$ is algebraically closed.
Suppose the corank of some non-zero $q$ in $\sfq$ is $> 1$.
Then the radical $\rad(b)$ of the associated bilinear form $b$ is of
dimension $\ge 3$.
This implies that $|\rad(b)|$ contains a plane intersecting
$X$ non-trivially.
This shows that $V(q)$ has a singular point $x$ in $X$.
The base locus $X$ is smooth,
so the tangent space of $X$ at $x$ is of codimension $2$.
We have $X = V(q) \cap V(q')$ for some $q' \in \sfq$.
Thus, the tangent spaces of $V(q)$ and $V(q')$ at $x$ intersect along
a subspace of codimension $2$.
This implies that $V(q)$ is smooth at $x$, a contradiction.
\end{proof}

\subsection{Alternate proof of Theorem~\ref{thm:smooth}}

In the remainder of this section, we present a geometric proof that
works in arbitrary characteristic which we believe is of independent
interest.  It is not necessary for the rest of the paper.
Note that Theorem~\ref{thm:smooth} is well known when the characteristic
of $\bbk$ is odd.
Unlike the situation for the rest of the paper, in this section
we assume $\bbk$ is algebraically closed of arbitrary characteristic
and that the integer $n$ is odd only if the characteristic is even.

Let $\sfD$ be the subvariety of singular quadrics in $S^2E^\vee$
(this is just defined by the discriminant or half-discriminant).
Let $\sfD_0$ be the open subvariety of $\sfD$ consisting of quadrics
which have a unique isolated singular point
(an open condition since it is determined by the rank of the
matrix of the associated bilinear form).

\begin{lem}\label{lem:reg}
Let $L$ be a linear subspace of $S^2E^\vee$ of dimension $r > 0$.
Then the tangent space of $\sfD \cap L$ at a point $q\in
\sfD_0 \cap L$ with singular vector $v_0 \in E$ is the
linear space $\{q\in L: q(v_0) = 0\}$.
\end{lem}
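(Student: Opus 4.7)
The plan is to compute the tangent space of $\sfD$ at $q$ by introducing the incidence variety of (quadric, singular point) pairs, and then intersect with $L$. The hypothesis $q \in \sfD_0$ (unique isolated singular point) is precisely what makes the projection from the incidence variety to $\sfD$ an isomorphism locally, so the tangent-space computation on the incidence variety descends to $\sfD$.

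Concretely, I would introduce
\[
\sfI \;=\; \bigl\{\,(q',[v]) \in S^2 E^\vee \times |E| \;:\; v \in \rad(\sfb_{q'}) \text{ and } q'(v)=0\,\bigr\},
\]
equivalently the locus of pairs such that $[v]$ is a singular point of $V(q')$. Since $q\in \sfD_0$, the singular locus of $V(q)$ consists of the single reduced point $[v_0]$; thus, in a Zariski open neighborhood of $(q,[v_0])$, every quadric in $\sfD_0$ still has a unique (isolated) singular point, so the first projection $\sfI \to \sfD$ is a bijection near $(q,[v_0])$ and, in fact, its inverse is regular (the singular point depends algebraically on $q'$ by Cramer's rule applied to the rank-$(n-1)$ polar matrix). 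Hence it suffices to compute the tangent space of $\sfI \cap (L \times |E|)$ at $(q,[v_0])$ and push it forward.

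Next I would compute this tangent space by first-order perturbation. Write $q(\varepsilon) = q + \varepsilon q'$ with $q' \in L$, and $v(\varepsilon) = v_0 + \varepsilon v_1$ with $v_1 \in E$ taken modulo $\bbk v_0$. Expanding to order $\varepsilon$:
\begin{align*}
q(\varepsilon)\bigl(v(\varepsilon)\bigr) &= q(v_0) + \varepsilon\bigl(q'(v_0) + b_q(v_0,v_1)\bigr) \equiv \varepsilon\, q'(v_0), \\
b_{q(\varepsilon)}\bigl(v(\varepsilon),w\bigr) &= b_q(v_0,w) + \varepsilon\bigl(b_{q'}(v_0,w) + b_q(v_1,w)\bigr) \equiv \varepsilon\bigl(b_{q'}(v_0,w) + b_q(v_1,w)\bigr),
\end{align*}
where both congruences use $q(v_0)=0$ and $v_0 \in \rad(b_q)$. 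The vanishing of the $\varepsilon$-terms for all $w$ yields the conditions
\[
q'(v_0) = 0 \qquad \text{and} \qquad b_q(v_1,\cdot) = b_{q'}(v_0,\cdot)\text{ in } E^\vee.
\]

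The first condition is precisely the claimed linear equation cutting out the alleged tangent space. It remains to check that once $q'(v_0)=0$ holds, one can solve for $v_1$: since $q \in \sfD_0$, the polar form $b_q$ has corank exactly $1$ with radical $\bbk v_0$, so $b_q \colon E \to E^\vee$ has image equal to $\{\ell \in E^\vee : \ell(v_0)=0\}$; and indeed $b_{q'}(v_0,\cdot)$ lies in this hyperplane because $b_{q'}(v_0,v_0)=0$ (alternating). Thus the projection $\sfI\cap(L\times|E|) \to \sfD\cap L$ at $(q,[v_0])$ has tangent image exactly $\{q' \in L : q'(v_0)=0\}$, which completes the proof.

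The one subtle point, and the main thing to get right, is the identification of the image of $b_q$ with the hyperplane $v_0^\perp \subset E^\vee$ in characteristic $2$; once this is in hand the linear algebra is routine. Note that the dimension count is consistent: in characteristic $2$ with $n$ odd, $\sfD$ has codimension $1$ in $S^2E^\vee$ (half-discriminant), the hyperplane $\{q': q'(v_0)=0\}$ has codimension $1$ as well, and the freedom in $v_1$ modulo $\bbk v_0$ accounts for the one-dimensional kernel of the projection from the incidence variety.
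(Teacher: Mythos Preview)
Your proposal is correct and follows essentially the same approach as the paper: both introduce the incidence variety of (quadric, singular point) pairs, linearize its defining equations to obtain the conditions $q'(v_0)=0$ and $b_{q_0}(v_1,\cdot)=b_{q'}(v_0,\cdot)$, and then project to $\sfD\cap L$. The only notable differences are stylistic---the paper works with $E\setminus\{0\}$ rather than $|E|$, verifies smoothness of the incidence variety via the second projection, and establishes the local isomorphism by a differential/dimension argument rather than your appeal to a regular inverse via Cramer's rule (in the char~$2$, odd~$n$ case this inverse is really the Pfaffian map $\Omega$ of Proposition~\ref{prop:Omega}).
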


\begin{proof}
Let
\[
\tilde{\sfD} = \{(q,v)\in S^2E^\vee\setminus \{0\}\times
E\setminus \{0\}: [v]\in \Sing(V(q))\}.
\]
Consider the maps $p_1 : \tilde{\sfD} \to S^2E^\vee$ and
$p_2:\tilde{\sfD}\to E$ obtained from the projections.
For any $v\in E\setminus \{0\}$,
let $F_v$ be the closure of $p_1(p_2^{-1}(v))$ in $S^2E^\vee$.
By choosing a basis in $E$ such that $v = (1,0,\ldots,0)$,
the subvariety $F_v$ consists of quadratic forms that do not
contain the first variable.
Thus, $F_v$ is a linear subspace of $S^2E^\vee$ of codimension $n$.
In particular,
$\tilde{\sfD}$ is a fibration over $E\setminus \{0\}$ with
fibers isomorphic to linear spaces of the same dimension.
It follows that $\tilde{D}$ is smooth and of dimension
$n+\dim S^2E^\vee-n = \dim S^2E^\vee$.

The variety $p_2(p_1^{-1}(q))$ is, set-theoretically, the linear
space of singular vectors.
Passing to the map of the associated
projective varieties $\pi : |\tilde{\sfD}|\to |S^2E^\vee|$, we
expect that the map $\pi$ induces a birational isomorphism with the
subvariety $|\sfD_0|$ of quadrics with isolated singular point.

We will compute the differential of the map
$p_1:\tilde{\sfD}\to S^2E^\vee$ at a point $(q_0,v_0)$ represented by a
quadratic form $q_0$ with one-dimensional space of singular vectors
generated by $v_0$.

Let  $t_1,\ldots,t_{n}$ be coordinates  in $E$ and let $A_{ij}$  be
coordinates in  $S^2E^\vee$ corresponding to the coefficients of a
quadratic form.   
Then $\tilde{\sfD}$ is given  by $n+1$  equations:
\begin{eqnarray}
F_k &=& \sum_{j=1}^{n} B_{kj}t_j = 0, \quad k = 1,\ldots,n,\\
F_{n+1} &=&  \sum_{1\le i\le j\le n}A_{ij}t_it_j = 0,
\end{eqnarray}
where $B_{ii} = 2A_{ii}$ and $B_{ij} = B_{ji} = A_{ij}$ if $i < j$.

Let  $q_0 = \sum_{1\le i\le j\le n} a_{ij}t_it_j$ and $v_0 =
(c_1,\ldots,c_{n})$. The embedded tangent space $T_{q_0,v_0}(\tilde{\sfD})$ of
$\tilde{\sfD}$ is a subspace of
$T_{q_0}(S^2E^\vee)\oplus T_{v_0}(E) = S^2E^\vee\oplus E$
given by a system of linear equations in variables
$t_i,A_{ij}$ with matrix of coefficients $[M_1 M_2](a_{ij},c_i)$,
where 
\begin{eqnarray}
M_1 &=& \left(\frac{\partial F_k}{\partial t_j}\right)_{
1\le k\le n+1, 1\le j\le n}, \textrm{ and }\\ 
M_2 &=& \left(\frac{\partial F_k}{\partial A_{ij}}\right)_{
1\le k\le n+1, 1\le i\le j\le n}.
\end{eqnarray}
Computing the partial derivatives, we find that 
$T_{q_0,v_0}(\tilde{\sfD})$ consists of pairs $(q,v)$ satisfying the conditions
\begin{equation}\label{eq:tangent}
b_0(v,w) + b(v_0,w) = 0, \quad b_0(v_0,v)+q(v_0) = q(v_0) = 0,
\end{equation}
for all $w \in W$,
where $b$ (respectively $b_0$) is the associated bilinear form of
$q$ (respectively $q_0$).

The kernel of the differential of the map
$\tilde{\sfD} \to \sfD, (q,v) \mapsto q$
obtained from the projection 
can be identified with the linear space of vectors $v\in E$
such that $b_0(v,w) = 0$ for all $w \in E$;
this is the radical of the bilinear form $b_0$.
The map
\[T_{(q_0,v_0)}\tilde{\sfD}\to \{q:q(v_0)=0\}, (q,v)\to q\]
is linear and has kernel $\rad(b_0)$.
Since the tangent space of $\sfD_0$ at
$q_0$ is isomorphic to the quotient space
$T_{(q_0,v_0)}\tilde{\sfD}/\rad(b_0)$, we obtain that
\[ T_{q_0}\sfD =  \{q\in S^2E^\vee:q(v_0) = 0\}. \]

Note that $\rad(b_0)$ is one-dimensional
(this does not hold when $n$ is even and $\bbk$ is
characteristic $2$).
This is equal to the dimension of the fiber $p_1^{-1}(q_0)$.
This shows that the differential of $p_1$ is of maximal
rank, and hence $\pi$ is an isomorphism over $|\sfD_0|$.
Now let $L\subset S^2E^\vee$ be a linear non-zero subspace of
$S^2E^\vee$, then the variety of quadratic forms in $L$ with
one-dimensional space of singular vectors is equal to $L\cap \sfD_0$.
We have
\[ T_{q_0}(L\cap \sfD) = T_{q_0}L\cap T_{q_0}\sfD =
L\cap T_{q_0}\sfD = \{q\in L:q(v_0) = 0\}.\]
This proves the assertion. 
\end{proof}

\begin{proof}[Proof of Theorem~\ref{thm:smooth}]
We may assume $\bbk$ is algebraically closed.
Consider $L := \sfq(U)$, the two-dimensional linear subspace of
$S^2E^\vee$.
First, we claim that $x$ is a singular point of $X$ if and only if
$x$ is a singular point of $Q=V(q)$ for some $q \in L$.
The tangent space $T_xX$ is the intersection of the tangent spaces
$T_xQ\cap T_xQ'$, for any two distinct quadrics $Q,Q'$ from $L$.
If $x$ is a singular point of $Q$, then $T_xQ = |E|$.
Thus $\textrm{codim}~T_xX \le 1$ and $x$ is a singular point of $X$.
If $x$ is a singular point of $X$, then either $T_xQ=|E|$ for some
quadric $Q$, or $T_xQ = T_xQ'$ for any two quadrics $Q,Q'$ from $L$.
In the first case, $x$ is a singular point of $Q$.
In the latter case, if $b,b'$ are the corresponding bilinear forms,
then the tangent spaces are defined by linear forms
$b(v,-)$ and $b'(v,-)$ where $v \in E$ represents $x$.
Since the linear forms are proportional by assumption,
some non-trivial linear combination $(\lambda b + \mu b')(v,-)$ is trivial.
The quadric corresponding to that linear combination
is therefore singular at $x$.
This establishes the claim. 

As an immediate consequence, if a quadric $Q$ in $L$ has corank $> 1$,
then $X$ is not smooth; indeed, in this case, the singular locus of $Q$
has non-trivial intersection with some other quadric in the pencil
since it is positive dimensional.

Assume $\sfq$ is regular; in other words, that $X$ is smooth.
Suppose $q_0$ is a degenerate quadratic form in $L$ with corresponding
quadric $Q_0$.
Since $X$ is smooth, $Q_0$ has corank $\le 1$, so $q_0$ must be in $\sfD_0$.
Let $v_0 \in E$ represent the unique singular point $|v_0|$ of $Q_0$.
Since $|v_0| \notin X$, the space 
\[ |\{ q \in L\ :\ q(v_0)=0\}| \]
is just the point $|q_0|$.
By Lemma~\ref{lem:reg}, we conclude that the tangent space of $|D \cap L|$
at the point $|q_0|$ is $0$-dimensional.
Thus $|L|$ and $|D|$ meet transversally at each intersection point and
$|L \cap D|$ consists of exactly $n$ distinct points as desired.

Now, suppose $|L \cap \sfD|$ consists of exactly $n$ distinct points.
This is equivalent to $|L|$ intersecting transversally the
hypersurface $|\sfD|$.
From the proof of Lemma~\ref{lem:reg}, the differential of
$\tilde{\sfD} \to \sfD$ at a point $(q,v)$
has kernel equal to the radical of the associated bilinear form $b$.
The dimension of this radical is minimal precisely for $\sfD_0$;
so $\sfD_0$ is the smooth locus of $\sfD$.
Since $|L|$ intersects $|\sfD|$ transversally at every intersection
point, in fact $|L \cap \sfD| = |L \cap \sfD_0|$.
If $q_0 \in L \cap \sfD$, then, by Lemma~\ref{lem:reg},
$|\{ q \in L\ :\ q(v_0)=0\}|$ is a single point.
Thus, the unique singular point of the quadric $Q_0$ corresponding to
$q_0$ does not lie in $X$.
Since none of the singular points of the quadrics lie in $X$,
we conclude from above that $X$ is smooth.
\end{proof}

\section{Normal forms}
\label{sec:normalForms}

\subsection{Kronecker basis}
In this section, we prove Theorem~\ref{thm:NormalForm} and discuss some
of its consequences.
At least in retrospect, the theorem follows quite easily using known
techniques regarding pairs of bilinear forms going back
to L. Kronecker.
Throughout $E$ is a vector space of dimension $n=2m+1 \ge 3$ over a
field of characteristic $2$.

Given a pair of alternating bilinear forms $(b_0,b_1)$ on $E$,
we say that a direct sum $E = E_1 \oplus E_2$ is \emph{orthogonal} if it is
orthogonal with respect to both $b_0$ and $b_1$.
A pair of alternating bilinear forms is \emph{nonsingular}
if the corresponding determinant polynomial is non-zero.
For a positive integer $k$,
a \emph{basic singular pair} is a pair of alternating bilinear forms
$(b_0,b_1)$
such that there exists a basis
$\calB = (w_0,\ldots,w_k,v_0,\ldots,v_{k-1})$
of the underlying vector space
such that
\begin{equation} \label{eq:basicEquations}
\begin{array}{rcl c rcl c rcl}
b_0(w_i,w_j)&=&0, & &
b_0(v_i,v_j)&=&0, & &
b_0(w_i,v_j)&=&\delta_{i(j+1)},\\
b_1(w_i,w_j)&=&0, & &
b_1(v_i,v_j)&=&0, & &
b_1(w_i,v_j)&=&\delta_{ij},
\end{array}
\end{equation}
for all valid $i,j$.
We call a basis $\calB$ as above a \emph{Kronecker basis}.

\begin{thm} \label{thm:Kronecker}
Let $(b_0,b_1)$ be a pair of alternating bilinear forms on a vector
space $E$.  Then $E$ can be written as an orthogonal direct sum of a
nonsingular pair and a set of basic singular pairs.
\end{thm}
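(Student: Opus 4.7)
The plan is to induct on $\dim E$, following the classical Kronecker reduction for matrix pencils, adapted to alternating forms in characteristic $2$. The base case $\dim E = 0$ is vacuous. For the inductive step, either $(b_0,b_1)$ is nonsingular, in which case we take $V_1 = E$ with no basic singular summands, or we must produce a basic singular pair as an orthogonal summand $F$ of $E$ and apply induction to its complement.

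To locate such a summand, I would work with the \emph{minimal Kronecker index} $k$: the smallest nonnegative integer for which there exist vectors $w_0,\ldots,w_k,v_0,\ldots,v_{k-1}$ in $E$ satisfying~\eqref{eq:basicEquations}. The non-existence of such a chain for any $k$ is exactly the nonsingularity hypothesis (on odd-dimensional subpencils this always fails, so a chain always exists, while on even-dimensional subpencils non-existence corresponds to non-vanishing of the Pfaffian of $t_0 M_0 + t_1 M_1$). Given a chain of minimal length, I would first verify linear independence of the $2k+1$ constructed vectors by pairing any hypothetical relation against $v_j$ under $b_0$ and $b_1$ and using the specific pattern of nonzero entries in~\eqref{eq:basicEquations}; this forces all coefficients to vanish successively. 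Let $F$ denote their span; by construction $(b_0|_F,b_1|_F)$ is a basic singular pair.

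The main difficulty is producing an orthogonal complement $F'$ with $E = F \oplus F'$ and $b_i(F, F') = 0$ for $i=0,1$. Starting from any vector-space complement, each of its basis vectors must be adjusted by some element of $F$ so as to lie in the joint annihilator $F^{\perp_0} \cap F^{\perp_1}$; equivalently, one must solve a coupled system of linear equations for the correction terms. Solvability is precisely what minimality of $k$ delivers: any obstruction to solving this system can be repackaged, via a Kronecker-type shift (roughly, replacing $w_i$ by $w_{i-1}$ and $v_j$ by $v_{j-1}$ and reading off a new chain from the obstruction data), into a Kronecker chain of length strictly less than $k$, contradicting minimality. This shift-and-contradict argument is the crux of the proof. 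Once $F'$ is obtained, applying the inductive hypothesis to $(b_0|_{F'},b_1|_{F'})$ yields the desired orthogonal decomposition into a nonsingular pair and further basic singular pairs.
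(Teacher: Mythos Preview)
The paper does not actually prove this theorem; it cites Theorem~3.3(1) of \cite{LS99} and remarks that Kronecker's classical argument for symmetric forms (Theorem~3.1 of \cite{Wat76}) carries over verbatim to alternating forms in characteristic~$2$, as already observed in \cite{Wat77}. Your inductive sketch---split off a basic singular summand governed by a minimal Kronecker index, use minimality both for linear independence and for the shift-and-contradict construction of the orthogonal complement, then recurse---is exactly that classical argument, so there is nothing to contrast.
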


Theorem~\ref{thm:Kronecker} is a special case of
Theorem~3.3(1)~of~\cite{LS99}.
The statement and its proof are essentially due to Kronecker,
although his version
applies to symmetric bilinear forms in characteristic $\ne 2$;
see Theorem~3.1~of~\cite{Wat76}.
The proof in \cite{Wat76} carries over to the case of alternating bilinear
forms in characteristic $2$ as observed before the statement of
Theorem~9~of~\cite{Wat77}.

\begin{corollary}
Let $(q_0,q_1)$ be a regular pair of quadratic forms and let $(b_0,b_1)$
be the associated pair of alternating bilinear forms.
Then $(b_0,b_1)$ is a basic singular pair and the vector 
\begin{equation} \label{eq:OmegaDef}
\Omega = \sum_{i=0}^m \lambda^{m-i} \mu^i w_i
\end{equation}
spans $\rad(\lambda b_0 + \mu b_1)$ where
$w_i$ are basis elements in a Kronecker basis
\eqref{eq:basicEquations}.
\end{corollary}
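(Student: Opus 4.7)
The plan is to combine Corollary~\ref{cor:corank} with Theorem~\ref{thm:Kronecker}: regularity forces the Kronecker decomposition to contain a single basic singular summand and no nonsingular part, after which the formula for $\Omega$ drops out of a direct computation.

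First, I would observe that by Corollary~\ref{cor:corank} every non-zero $\sfq_u$ has corank $1$, so every non-zero $\sfb_u = \lambda b_0 + \mu b_1$ has corank $1$ as well. In particular the associated pencil $\sfb$ is injective (for $n \ge 3$, a vanishing $\sfb_u$ would give corank $n > 1$). Second, applying Theorem~\ref{thm:Kronecker} I decompose
\[
E = V_0 \oplus V_1 \oplus \cdots \oplus V_s
\]
orthogonally with respect to both $b_0$ and $b_1$, where $(b_0,b_1)|_{V_0}$ is nonsingular and each $(b_0,b_1)|_{V_i}$ for $i \ge 1$ is a basic singular pair of some size $2k_i+1$. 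Since $\rad(\lambda b_0 + \mu b_1)$ respects orthogonal sums, the corank of $\lambda b_0 + \mu b_1$ on $E$ is the sum of the coranks on the summands.

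Third, I would examine each summand separately. On a basic singular pair of size $2k+1$ with Kronecker basis $(w_0,\dots,w_k,v_0,\dots,v_{k-1})$, writing $c = \sum_i c_i w_i$ and using \eqref{eq:basicEquations} gives
\[
(\lambda b_0+\mu b_1)(c,v_j) = \lambda c_{j+1}+\mu c_j \qquad (0\le j\le k-1),
\]
and the $v_i$'s are already orthogonal to every $w_i$ under the other slot, so the radical has dimension exactly $1$ for every $(\lambda,\mu)\ne(0,0)$, spanned by $c_j = \lambda^{k-j}\mu^j$. On the other hand, $V_0$ has some even dimension $2d$, and the Pfaffian of $\lambda b_0+\mu b_1$ restricted to $V_0$ is a nonzero form of degree $d$ in $(\lambda,\mu)$; at any root $(\lambda_0,\mu_0)\ne 0$ the corank on $V_0$ would be at least $2$ (coranks of alternating forms on an even-dimensional space are even).

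Fourth, I combine these observations. If the total corank is $1$ at every nonzero $(\lambda,\mu)$, then (a) there must be exactly one basic singular summand, and (b) the Pfaffian of $V_0$ has no zeros on $\bbP^1$, forcing $d=0$ and $V_0=0$. Hence $E$ itself is a basic singular pair with $k=m$, and the formula
\[
\Omega = \sum_{i=0}^m \lambda^{m-i}\mu^i\,w_i
\]
follows from the computation in step three. The only subtle points are the additivity of corank on orthogonal decompositions and the parity of corank in even dimension; neither is really an obstacle. The main conceptual content is the passage from \emph{everywhere corank $1$} (supplied by regularity) to \emph{there is exactly one Kronecker block}, which is the bridge Theorem~\ref{thm:Kronecker} provides.
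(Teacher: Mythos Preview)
Your argument is correct and follows the same skeleton as the paper: invoke Corollary~\ref{cor:corank} to get corank $1$ everywhere, apply Theorem~\ref{thm:Kronecker}, deduce there is exactly one basic singular block, and compute the radical directly on that block. The one place you diverge is in ruling out the nonsingular summand $V_0$. The paper argues via the half-discriminant: if $k<m$ then the canonical Pfaffian vector $\Omega$ (of degree $m$) acquires a common polynomial factor $g$ of degree $m-k$, hence $\Delta=\sfq(\Omega)$ is divisible by $g^2$, contradicting the separability of $\Delta$. Your argument stays entirely at the level of bilinear forms: the Pfaffian of $(\lambda b_0+\mu b_1)|_{V_0}$ is a nonzero form of positive degree, so over $\bar\bbk$ it has a zero, and there the total corank jumps to at least $3$. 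This is slightly more direct since it avoids invoking $\Delta$ at all, though it does implicitly require passing to the algebraic closure (harmless, since corank is insensitive to base change and Corollary~\ref{cor:corank} already works geometrically). One small point: your sentence ``the $v_i$'s are already orthogonal to every $w_i$ under the other slot'' is meant to justify that the radical lies in the span of the $w_i$, and it would be cleaner to spell out that the equations $(\lambda b_0+\mu b_1)(c,w_l)=0$ force all $v$-coefficients of $c$ to vanish.
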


\begin{proof}
By a straightforward calculation (see Lemma~3.2~of~\cite{LS99}),
for a basic singular pair $(b'_0,b'_1)$
on a vector space of dimension $2k+1$, the radical of
$\lambda b'_0 + \mu b'_1$
is spanned by the vector
\[
\Omega' = \sum_{i=0}^k \lambda^{k-i} \mu^i w_i \ .
\]

By Corollary~\ref{cor:corank}, any non-zero bilinear form from the family
$\lambda b_0 + \mu b_1$ has corank $1$.
Thus there is exactly one basic singular pair in the decomposition from
Theorem~\ref{thm:Kronecker} and its radical is given by
$\Omega=\Omega'$.
It remains only to establish that $k=m$, and thus there is no
nonsingular summand in the decomposition.

The vector $\Omega$ can be viewed as a homogeneous polynomial function
$U \to E$ of degree $m$.
Thus, if $k < m$ then in any choice of basis
the entries of $\Omega$ must all be divisible by a homogeneous polynomial
$g(\lambda,\mu)$ of positive degree.
But $\Delta(u)=\sfq_u(\Omega(u))$ for all $u \in U$, so the half-discriminant
would then be divisible by $g(\lambda,\mu)^2$.
This contradicts that the roots of
$\Delta$ are distinct and so $k=m$ as desired.
\end{proof}

It follows that the map $\Omega : U \to E$ is injective and its image
spans the same subspace $W$ as that spanned by $w_0, \ldots, w_m$.
Since $\Omega$ does not depend on any choice of coordinates, we see that
$W$ is canonical.
Any $(m+1)$-dimensional totally isotropic subspace of an alternating
bilinear form of rank $2m$ on a $(2m+1)$-dimensional space must contain
the radical.
Since $W$ is the minimal space containing the radical of the associated
bilinear form to every quadratic form in the pencil, we obtain the
following\footnote{The existence of such a subspace for any pencil of 
alternating forms is a known fact,
see the Lemma in the Appendix to the paper \cite{Beauville}}:

\begin{corollary}
The space $W$ spanned by $w_0, \ldots, w_m$ is canonical.
It is the unique common $(m+1)$-dimensional totally singular subspace of all
quadratic forms in the pencil.
\end{corollary}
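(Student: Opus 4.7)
The plan is to get canonicity for free by proving uniqueness: once $W$ is characterized intrinsically as the unique $(m+1)$-dimensional common totally singular subspace, the fact that its definition via $w_0,\ldots,w_m$ used a non-canonical Kronecker basis becomes irrelevant.

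First I would record existence. Since "totally singular for a quadratic form $q$" means exactly "totally isotropic for the polar bilinear form $b_q$", I would use the Kronecker basis relations \eqref{eq:basicEquations}, which give $b_0(w_i,w_j)=b_1(w_i,w_j)=0$ for all $i,j$. By linearity $(\lambda b_0+\mu b_1)(w_i,w_j)=0$, so $W=\operatorname{span}(w_0,\ldots,w_m)$ is totally isotropic for every bilinear form in the pencil, i.e.\ totally singular for every quadratic form in the pencil. Linear independence of $w_0,\ldots,w_m$ (they are part of a basis) gives $\dim W=m+1$.

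Next I would prove uniqueness. Let $W'\subseteq E$ be any $(m+1)$-dimensional subspace on which every $q$ in the pencil restricts to a totally singular form, equivalently, which is totally isotropic for every $b_u$, $u\in U$. For $u\ne 0$, Corollary~\ref{cor:corank} says the non-zero quadratic form $\sfq_u$ has corank $1$; hence $b_u$ has corank $1$, making $b_u$ a rank-$2m$ alternating form on a $(2m+1)$-dimensional space. By the observation used already in the paragraph preceding the corollary, any $(m+1)$-dimensional totally isotropic subspace of such a form must contain its $1$-dimensional radical $\rad(b_u)=\bbk\cdot\Omega(u)$. Therefore $\Omega(u)\in W'$ for all $u\in U$. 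Since $W$ is spanned by $\{\Omega(u):u\in U\}$ (this is the content of \eqref{eq:OmegaDef} together with the fact that the monomials $\lambda^{m-i}\mu^i$ are a basis of $S^mU^\vee$), we get $W\subseteq W'$, and a dimension count forces equality.

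Finally, canonicity of $W$ follows: $W$ is characterized purely in terms of the pencil by the property of being the unique $(m+1)$-dimensional common totally singular subspace, so it does not depend on the choice of Kronecker basis used to exhibit it. I expect no serious obstacle; the only step that could mislead a reader is the identification "totally singular for $q$'' $\Longleftrightarrow$ "totally isotropic for $b_q$'', which deserves an explicit sentence, and the maximality argument for totally isotropic subspaces of a corank-$1$ alternating form, which is standard but should be cited.
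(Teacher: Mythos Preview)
Your proposal is correct and follows essentially the same route as the paper: both use that every $(m+1)$-dimensional totally isotropic subspace of a corank-$1$ alternating form must contain the radical $\bbk\cdot\Omega(u)$, hence any common such subspace contains the span of the $\Omega(u)$, which equals $W$ by \eqref{eq:OmegaDef}. The only cosmetic difference is that the paper deduces canonicity directly from the fact that the radical map $\Omega$ is coordinate-free, whereas you deduce it from the uniqueness characterization; both are valid and equivalent.
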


Note that choice of basis $w_0, \ldots, w_m$ is determined by the choice
of basis $u_0, u_1$ in $U$ (or, equivalently, the choice of pair
$(q_0,q_1)$ corresponding to the pencil $|\sfq|$).
We also see that a choice of a basis $u_0,u_1$ in $U$ and the canonical
isomorphism $W$ with $(S^mU^\vee)^\vee$ defines a basis in $W$ equal to
the image of the dual basis to the monomial basis
$(t_0^m,t_0^{m-1}t_1,\ldots,t_1^m)$ in $S^mU^\vee$. 

Another consequence of the Kronecker theorem is that the map
$\Omega:U\to E$ is equal to the composition of the map $U\to
(S^mU^\vee)^\vee \to E$, where the second map is injective.
This implies that the image of $U$ is equal to the affine cone over a
\emph{Veronese curve} $R_m$ of degree $m$ in
$|W| \cong |(S^mU^\vee)^\vee| \cong \bbP^m$.
Recall that a Veronese curve in $\bbP^m$ is a smooth rational curve of
degree $m$ equal to the image of $\bbP^1$ under a map given by linearly
independent homogeneous polynomials of degree $m$.
By choosing a monomial basis $t_0^{m-i}t_1^i$ of such polynomials, it is
projectively equivalent to a curve $R_m$ in $\bbP^m$ given by equations
expressing the condition that 
\begin{equation} \label{eq:veronese}
\rank\begin{pmatrix}x_0&x_1&\ldots&x_{m-1}\\
x_1&x_2&\ldots&x_m\end{pmatrix} = 1.
\end{equation}
For example, when $m = 2$, this is a smooth conic in $\bbP^2$.
In a coordinate-free way, the Veronese curve is the image of $\bbP^1 =
|U|$ in $|(S^mU^\vee)^\vee|$ under the complete linear system
$|\calO_{|U|}(m)| = |S^mU^\vee|$.
The image of the scheme $V(\Delta)$ of roots of the half-discriminant
$\Delta$ of the pencil under the map $\Omega:|U|\to |W|$ is a
$0$-dimensional closed subscheme $Z$ of the Veronese curve $R_m$ of
length $n$. Over an algebraically closed extension of $\bbk$ it becomes
a union of $n$ distinct points.

We now prove the main theorem:
 
\begin{proof}[Proof of Theorem \ref{thm:NormalForm}]
Choose a Kronecker basis $w_0,\ldots,w_m,v_0,\ldots,v_{m-1}$ and the
corresponding coordinates $x_0,\ldots,x_m,y_0,\ldots,y_{m-1}$ in $E$.
Let $W$ be the span of $w_0, \ldots, w_m$ and let $L$ be the span
of $v_0, \ldots, v_{m-1}$.
Thus $E = W \oplus L$ is a direct sum of totally singular subspaces
for all quadrics in the pencil $| \sfq |$
(since they are totally isotropic for the associated alternating
bilinear forms).
This implies that the restrictions of $q_0,q_1$ to $W$ (resp. $L$) is
equal to a linear combination of squares of $x_i$'s (resp. $y_i$'s).
So, we have reduced $q_0,q_1$ to the expressions from the Theorem.
It remains to prove the assertion about the coefficients $\{ a_i \}$.
We use the expression of $\Omega$ computed above to
find the half-discriminant.
Let $u_0,u_1$ be the basis of $U$ corresponding to $q_0,q_1$ and
$t_0,t_1$ be the dual basis.
Then 
\begin{align*}
\Delta(t_0,t_1)
=&\ (t_0q_0+t_1q_1)(t_0^m,t_0^{m-1}t_1,\ldots,t_1^m,0, \ldots, 0)\\
=&\ t_0 \left(\sum_{i=0}^ma_{2i}t_0^{2(m-i)}t_1^{2i}\right)
+t_1\left( \sum_{i=0}^ma_{2i+1}t_0^{2(m-i)}t_1^{2i} \right) \\
=&\ \sum_{i=0}^{2m+1}a_it_0^{2m+1-i}t_1^i
\end{align*}
as desired.
\end{proof}

We say that a regular pair of quadratic forms is in \emph{normal form}
if it is written in coordinates corresponding to a Kronecker basis in $E$. 

Note that, given a specific choice of pair $(q_0,q_1)$, the
basis $w_0, \ldots, w_m$ in a Kronecker basis is canonical.
However, given only a pencil $| \sfq |$, only 
the vector space $W$ spanned by $w_0,\ldots,w_m$ is canonical.
This has some strong consequences which we discuss now.
In the following $X$ denotes the base scheme $\Bs(\sfq)$ of a regular
pencil of quadrics $|\sfq|$ in $|E|$.

\begin{thm}\label{thm:canPlane}
Assume $m \ge 2$.
If $\bbk$ is a perfect field, then the variety $X$ contains
a canonical $(m-2)$-plane $\Pi$ defined over $\bbk$.
In particular, a del Pezzo surface of degree $4$ over a perfect field of
characteristic $2$ has a canonical point.
\end{thm}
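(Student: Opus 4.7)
The plan is to exploit the canonical $(m+1)$-dimensional totally singular subspace $W\subset E$ already produced, restrict the pencil to $W$, and use that over a perfect field of characteristic $2$ every totally singular quadratic form is the square of a unique linear form. Intersecting the zero loci of the two resulting linear forms inside $|W|$ will produce the desired $\Pi$.

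Concretely, since $W$ is totally singular for every $q\in\sfq$, the restrictions $q_0|_W$ and $q_1|_W$ are totally singular quadratic forms on $W$. Because $\bbk$ is perfect, there exist unique linear forms $\ell_0,\ell_1\in W^\vee$ with $q_j|_W=\ell_j^2$; in the coordinates of Theorem~\ref{thm:NormalForm} these read $\ell_j=\sum_{i=0}^m \sqrt{a_{2i+j}}\,x_i$. I then set $\Pi := V(\ell_0)\cap V(\ell_1)\subset |W|\subset |E|$. The inclusion $\Pi\subset X$ is automatic from $q_j(v)=\ell_j(v)^2$ for $v\in W$. For canonicality, $W$ is canonical by the corollary preceding this theorem; although the pair $(\ell_0,\ell_1)$ depends on the chosen basis $(q_0,q_1)$ of $\sfq$, the $\bbk$-plane they span inside $W^\vee$ depends only on the pencil $\sfq|_W$, so $\Pi$ is intrinsic. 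Since everything is defined over $\bbk$ and $\bbk$ is perfect, $\Pi$ is defined over $\bbk$.

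The main obstacle is showing that $\ell_0$ and $\ell_1$ are $\bbk$-linearly independent, without which $\Pi$ would be too large. If instead $\ell_0=c\ell_1$ for some $c\in\bbk$, then $a_{2i}=c^2 a_{2i+1}$ for all $i$, and a direct rearrangement yields
\[
\Delta(t_0,t_1) = (c^2 t_0+t_1)\left(\sum_{i=0}^m \sqrt{a_{2i+1}}\,t_0^{m-i}t_1^i\right)^{\!2},
\]
while the degenerate cases $q_0|_W=0$ or $q_1|_W=0$ yield $\Delta=t_1 R^2$ or $\Delta=t_0 R^2$ respectively. In each situation the squared factor has degree $m\geq 2$, forcing $\Delta$ to have a repeated root and contradicting regularity of $\sfq$ via Theorem~\ref{thm:smooth}. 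Specialising to $n=5$, $m=2$ makes $\Pi$ a $0$-plane, giving the canonical rational point on a characteristic-$2$ quartic del Pezzo surface.
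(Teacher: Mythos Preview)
Your proof is correct and follows essentially the same approach as the paper: restrict the pencil to the canonical subspace $W$, replace the totally singular quadrics $q_j|_W$ by their linear square roots $\ell_j$ (using perfectness), and rule out proportionality of $\ell_0,\ell_1$ by showing it would force $\Delta$ to acquire a repeated factor, contradicting regularity. The paper phrases the contradiction as $\Delta(T,1)=(T+c^2)g(T^2)$ rather than exhibiting the explicit square, and omits your discussion of canonicality under change of basis of the pencil and the degenerate cases $q_j|_W=0$, but these are minor presentational differences.
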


\begin{proof}
Note that the ideal of the subscheme $X \cap |W|$ is generated by
\[
q_0|_W = \sum_{i=0}^m a_{2i}x_i^2, \ 
q_1|_W = \sum_{i=0}^m a_{2i+1}x_i^2
\]
in projective coordinates for $|W|$.
Over a perfect field, the radical ideal is
$\langle l_0, l_1 \rangle$ where
$l_0^2=q_0|_W$ and $l_1^2=q_1|_W$.
If $l_0 = c l_1$ for some $c \in \bbk$,
then $a_{2i}=c^2a_{2i+1}$ for all $i$ in $0, \ldots, m$.
But this implies that
\[ \Delta(T,1)=(T+c^2)g(T^2) \]
for some polynomial $g \in \bbk[T]$,
contradicting the separability of $\Delta(T,1)$.
Thus the subspace $\{ l_0=l_1=0 \}$ in $X$ has codimension $2$ in
$|W|$ as desired.
\end{proof}

The vector space $L$ spanned by $v_0,\ldots,v_{m-1}$ is not canonical;
in fact, we have the following.
 
\begin{lem}\label{lem:LgivesNF}
Let $(q_0,q_1)$ be a regular pair of quadratic forms and let $(b_0,b_1)$
be the associated bilinear forms.
Let $W$ be the canonical subspace spanned by $(w_0,\ldots,w_m)$ in a
Kronecker basis.
Suppose $L$ is a totally isotropic subspace for both $b_0$ and $b_1$
and that $E = W\oplus L$.
Then there exists a basis $(v_0,\ldots,v_{m-1})$ in $L$ that, together with
the canonical basis $(w_0,\ldots,w_m)$ in $W$, forms a Kronecker
basis in $E$.
\end{lem}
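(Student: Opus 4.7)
The plan is to start with an arbitrary basis $(v'_0,\ldots,v'_{m-1})$ of $L$ and then adjust it by an element of $\GL_m(\bbk)$ to satisfy the Kronecker relations \eqref{eq:basicEquations}. Since both $W$ and $L$ are totally isotropic for $b_0$ and $b_1$, the internal pairings $b_\epsilon(w_i,w_j)$ and $b_\epsilon(v_i,v_j)$ vanish automatically for $\epsilon\in\{0,1\}$. Hence only the cross pairings $A_{ij}:=b_0(w_i,v'_j)$ and $B_{ij}:=b_1(w_i,v'_j)$, assembled into $(m+1)\times m$ matrices $A$ and $B$, require control.

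The main tool is the family of radicals established in the corollary preceding this lemma: for every $(t_0,t_1)\in U$ the vector $\Omega(t_0,t_1)=\sum_{i=0}^{m}t_0^{m-i}t_1^{i}w_i$ spans $\rad(t_0 b_0+t_1 b_1)$. Pairing $\Omega(t_0,t_1)$ against each $v'_j$ produces the polynomial identity
\[
\sum_{i=0}^{m}\bigl(t_0A_{ij}+t_1B_{ij}\bigr)t_0^{m-i}t_1^{i}\;=\;0
\]
in $\bbk[t_0,t_1]$. Matching coefficients (using $\cha(\bbk)=2$) will give $A_{0j}=0$, $B_{mj}=0$, and $A_{ij}=B_{i-1,j}$ for $1\le i\le m$. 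In other words, $A$ is forced to be the downward row-shift of $B$, with both extreme rows vanishing.

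To finish, I convert these structural relations into the sharp Kronecker form. The linear map $\phi:L\to W^{\vee}$, $v\mapsto b_1(-,v)|_W$, is injective: if $\phi(v)=0$ then total isotropy of $L$ for $b_1$ extends $b_1(-,v)$ to zero on all of $E$, so $v\in\rad(b_1)\cap L=\langle w_m\rangle\cap L=0$. Since $B_{mj}=0$, the image of $\phi$ lies in the $m$-dimensional subspace $\{f\in W^\vee:f(w_m)=0\}$, so $\phi$ is an isomorphism by dimension count and the top $m\times m$ block of $B$ is invertible. Choose $C\in\GL_m(\bbk)$ with $BC=\bsm I_m\\0\esm$ (identity top block, zero bottom row) and set $v_j=\sum_k C_{kj}v'_k$; the shift relation $A_k=B_{k-1}$ then forces $AC=\bsm 0\\I_m\esm$ automatically. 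The resulting basis satisfies $b_1(w_i,v_j)=\delta_{ij}$ and $b_0(w_i,v_j)=\delta_{i,j+1}$, which together with the total isotropies already noted is exactly the Kronecker basis. The only step requiring care is the coefficient extraction in the displayed polynomial identity, but this is routine bookkeeping given how explicit $\Omega$ is.
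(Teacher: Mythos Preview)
Your proof is correct and takes a genuinely different route from the paper's. The paper first normalizes the $b_1$-pairings by passing to the quotient $E/\langle w_m\rangle$ (where $b_1$ becomes nondegenerate) and using that the images of $W$ and $L$ are complementary Lagrangians; it then derives the $b_0$-relations by invoking the existence of \emph{some} Kronecker basis on a different complement $L'$ and transporting the identity $b_1(w_i,v')=b_0(w_{i+1},v')$ from $L'$ to $L$ via the decomposition $v=w+v'$. Your argument instead extracts the shift relation $A_{i\,\cdot}=B_{i-1,\cdot}$ directly from the polynomial identity $(t_0b_0+t_1b_1)(\Omega(t_0,t_1),v'_j)=0$, and then handles both forms at once by inverting the top block of $B$. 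The advantage of your route is that it never needs the auxiliary complement $L'$ or the quotient construction: once one accepts the formula~\eqref{eq:OmegaDef} for $\Omega$, everything is explicit linear algebra. The paper's route has the virtue of making the symplectic picture on $E/\langle w_m\rangle$ visible, which is conceptually useful elsewhere. One small point worth noting in your write-up: the vanishing of $(t_0b_0+t_1b_1)(\Omega(t_0,t_1),v'_j)$ is genuinely a polynomial identity (not merely pointwise over $\bbk$), since it can be checked against a known Kronecker basis for $E$ or, equivalently, since it holds over $\bar\bbk$; this justifies matching coefficients even when $\bbk$ is finite.
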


\begin{proof}
Since $b_1$ has a $1$-dimensional radical spanned by $w_m$,
the alternating bilinear form induced on the quotient space
$\bar{E} = E/\langle w_m \rangle$ is a direct sum of $m$ hyperbolic planes.
Since $w_m \in W$, the
images of $W$ and $L$ in $\bar{E}$ are complementary totally isotropic
subspaces of $\bar{E}$; and thus provide a pair of
complementary maximal totally isotropic subspaces.
Thus, we may find a basis $v_0, \ldots, v_{m-1}$ for $L$ such that it
satisfies the desired equations for $b_1$ from
\eqref{eq:basicEquations}.

We have established all desired equations from \eqref{eq:basicEquations}
except for $b_0(w_i,v_j)=\delta_{i(j+1)}$.
We know that some Kronecker basis $(w_0,\ldots,w_m,v'_0,\ldots,v'_{m-1})$
exists with corresponding splitting $E = W \oplus L'$.
For any $v\in L$ we can write $v = w+v'$ for $w\in W, v'\in L'$.
For all $i = 0, \ldots, m$, we have
$b_0(w_i,v)=b_0(w_i,v')$ since $b_0(w_i,w)=0$; and similarly for $b_1$.
From \eqref{eq:basicEquations}, we have that
$b_1(w_i,v'_j) = \delta_{ij} = b_0(w_{i+1},v'_j)$ for all valid $i,j$.
From this we conclude that $b_1(w_i,v) = b_0(w_{i+1},v)$ for any
$v \in L$.
We conclude that $b_0(w_i,v_j) = b_1(w_{i-1},v_j)=\delta_{i(j+1)}$
for all $i,j$ except when $i=0$.
This last case follows from the fact that $w_0$ is in the radical of
$b_0$.
\end{proof}

We have identified the radical subspace $W$ with $(S^mU^\vee)^\vee$.
The next proposition tells that we can identify any complement $L$
as in Lemma~\ref{lem:LgivesNF} with the space $S^{m-1}U^\vee$.

\begin{prop} \label{prop:coordFree}
Let $(q_0,q_1)$ be a regular pair of quadratic forms on $E$.
Up to a choice of volume form on $E$,
there is a canonical isomorphism $\iota : U \to U^\vee$ and
a canonical isomorphism $L \cong S^{m-1}(U^\vee)$, such that
\begin{equation} \label{eq:form1}
\sfb_u(v,w) = \psi_1(\iota(u)f_2) + \psi_2(\iota(u)f_1)
\end{equation}
where $v = (\psi_1,f_1)$ and $w = (\psi_2,f_2)$ are elements of
$W\oplus L = (S^mU^\vee)^\vee\oplus S^{m-1}U^\vee$ and
we note that $\iota(u) f_i$ can be viewed as an element of $S^m(U^\vee)$
via the multiplication map $U^\vee \otimes S^{m-1}U^\vee \to S^mU^\vee$.
\end{prop}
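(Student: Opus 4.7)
The plan is to verify \eqref{eq:form1} by explicit computation in a Kronecker basis. Fix a volume form on $E$; this determines the map $\Omega$ of \eqref{eq:OmegaFunction} and, via the Veronese factorization $U \to (S^mU^\vee)^\vee \to W$, a canonical isomorphism $W \cong (S^mU^\vee)^\vee$ sending the Kronecker basis element $w_i$ to the vector dual to the monomial $t_0^{m-i}t_1^i$, as recorded after Theorem~\ref{thm:NormalForm}. Here $(u_0,u_1)$ denotes the basis of $U$ attached to the ordered pair $(q_0,q_1)$ and $(t_0,t_1)$ is the dual basis. By Lemma~\ref{lem:LgivesNF} (combined with the fact that the $b_1$-pairing induces a perfect pairing between $W/\rad(b_1)$ and $L$, so that the relations $b_1(w_i,v_j)=\delta_{ij}$ already pin down the $v_j$ uniquely), the complement $L$ together with the canonical basis of $W$ determine a canonical basis $(v_0,\ldots,v_{m-1})$ of $L$ completing them to a Kronecker basis. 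I would then define the candidate isomorphism $L \to S^{m-1}U^\vee$ by $v_j \mapsto t_0^{m-1-j}t_1^j$, and the candidate isomorphism $\iota : U \to U^\vee$ as contraction with $t_0 \wedge t_1 \in \bigwedge^2 U^\vee$, so that $\iota(u_0)=t_1$ and $\iota(u_1)=t_0$.

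To establish \eqref{eq:form1}, note that both sides are bilinear in $(v,w)$, so it suffices to verify it on basis elements. When $v$ and $w$ both lie in $W$ or both lie in $L$ the formula holds trivially: the left side vanishes because these subspaces are totally isotropic for every form in the pencil, and the right side vanishes because the relevant $\psi_j$ or $f_i$ is zero. Since $\sfb_u$ is symmetric in characteristic $2$ and the right-hand side is manifestly symmetric in $v,w$, it remains only to check the mixed case $v = w_i \in W$, $w = v_j \in L$. Writing $u = \alpha u_0 + \beta u_1$, the Kronecker relations \eqref{eq:basicEquations} yield $\sfb_u(w_i,v_j) = \alpha\,\delta_{i,j+1}+\beta\,\delta_{i,j}$, while on the right
\[ \iota(u)\cdot t_0^{m-1-j}t_1^j \;=\; \alpha\, t_0^{m-1-j}t_1^{j+1} + \beta\, t_0^{m-j}t_1^j \]
pairs with $w_i$ (dual to $t_0^{m-i}t_1^i$) to give precisely $\alpha\,\delta_{i,j+1} + \beta\,\delta_{i,j}$, matching the left-hand side.

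The main conceptual obstacle is confirming that the constructions define genuine coordinate-free isomorphisms rather than mere artifacts of the chosen Kronecker basis. The canonicity of $W \cong (S^mU^\vee)^\vee$ is built into the Veronese factorization of $\Omega$ coming from \eqref{eq:OmegaDef}; the canonicity of $(v_0,\ldots,v_{m-1})$ given $L$ is the content of Lemma~\ref{lem:LgivesNF} together with the uniqueness observation above; and the map $\iota$ is intrinsically the duality $U \to U^\vee$ induced by the volume form $t_0 \wedge t_1 \in \bigwedge^2 U^\vee$, which is dual to $u_0 \wedge u_1 \in \bigwedge^2 U$ determined by the ordered pair $(q_0,q_1)$. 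Consequently the formulae for $\iota$ and for the identification of $L$ depend only on the stated data, so \eqref{eq:form1} is genuine and not tied to the particular Kronecker basis used to derive it.
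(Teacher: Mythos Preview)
Your proof is correct and follows essentially the same approach as the paper: both define $\iota$ by $\iota(u_0)=t_1$, $\iota(u_1)=t_0$, identify $W$ with $(S^mU^\vee)^\vee$ via $w_i\mapsto (t_0^{m-i}t_1^i)^\vee$ and $L$ with $S^{m-1}U^\vee$ via $v_j\mapsto t_0^{m-1-j}t_1^j$, and then verify \eqref{eq:form1} by direct comparison with the Kronecker relations \eqref{eq:basicEquations}. You actually carry out that comparison in full, whereas the paper leaves it as a one-line remark; and your description of $\iota$ as contraction against $t_0\wedge t_1\in\bigwedge^2 U^\vee$ is a pleasant intrinsic formulation that the paper does not make explicit.
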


\begin{proof}
Let $u_0,u_1$ be the basis for $U$ such that
$\sfq_{u_0}=q_0$ and $\sfq_{u_1}=q_1$,
and let $t_0,t_1$ be the dual basis.
The map $\iota$ is defined to satisfy $t_0=\iota(u_1)$ and $t_1=\iota(u_0)$.
We use the canonical isomorphism between $W$ and $(S^mU^\vee)^\vee$
which we deduced from formula \eqref{eq:OmegaDef};
more concretely: $w_i \mapsto (t_0^{m-i} t_1^i)^\vee$
(this is where we use the volume form on $W$).
Consider the isomorphism $L \cong S^{m-1}(U^\vee)$ given
by $v_i \mapsto t_0^{(m-1)-i}t_1^{i}$ for $i$ in $0,\ldots,m-1$.
A direct comparison of \eqref{eq:form1} and \eqref{eq:basicEquations}
in coordinates for $U$, $W$, and $L$ establishes the result.
\end{proof}

Proposition~\ref{prop:coordFree} allows us to compare normal forms for
different choices of pairs in the same pencil.

\begin{lem} \label{lem:changePair}
Suppose $u, u' \in U$ are linearly independent and consider
a Kronecker basis $(w_0,\ldots,w_m,v_0,\ldots, v_{m-1})$
for the pair $(q_u,q_{u'})$.
For any $g \in \GL(U)$, there exists an $h \in \GL(W) \times \GL(L)$
such that $(h(w_0), \ldots, h(v_{m-1}))$ is a Kronecker basis for
$(\sfq_{g(u)},\sfq_{g(u')}))$.
\end{lem}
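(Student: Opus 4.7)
The plan is to exploit the fact that although a Kronecker basis depends on the choice of pair $(q_u, q_{u'})$, the decomposition $E = W \oplus L$ it induces depends only on the pencil, not on the chosen basis of $U$. So the same decomposition can be reused for the new pair $(\sfq_{g(u)}, \sfq_{g(u')})$, and Lemma~\ref{lem:LgivesNF} then produces a new Kronecker basis respecting that decomposition. The desired $h$ is just the element of $\GL(W) \times \GL(L)$ that moves one basis to the other.

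Concretely, I would first observe that $W$ is intrinsic to the pencil $\sfq$: by the corollary preceding Theorem~\ref{thm:NormalForm}, $W$ is the unique common $(m+1)$-dimensional totally singular subspace for all quadrics in $|\sfq|$, independent of the choice of pair generating the pencil. In particular, the $W$ arising from the given Kronecker basis for $(q_u,q_{u'})$ coincides with the $W$ that would arise from any Kronecker basis for $(\sfq_{g(u)},\sfq_{g(u')})$.

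Next, I would check that $L := \mathrm{span}(v_0,\ldots,v_{m-1})$ is still totally isotropic with respect to the new polar forms $\sfb_{g(u)}$ and $\sfb_{g(u')}$. Since $g(u)$ and $g(u')$ are $\bbk$-linear combinations of $u$ and $u'$, the forms $\sfb_{g(u)}$ and $\sfb_{g(u')}$ are linear combinations of $\sfb_u$ and $\sfb_{u'}$. As $L$ is totally isotropic for both $\sfb_u$ and $\sfb_{u'}$ by \eqref{eq:basicEquations}, it is totally isotropic for any linear combination, and in particular for the new pair. Since $E = W \oplus L$ is unchanged, the hypotheses of Lemma~\ref{lem:LgivesNF} are met for the pair $(\sfq_{g(u)},\sfq_{g(u')})$, yielding a Kronecker basis $(w'_0,\ldots,w'_m,v'_0,\ldots,v'_{m-1})$ for this new pair with $w'_i \in W$ and $v'_i \in L$.

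I then define $h = (h_W, h_L) \in \GL(W) \times \GL(L)$ to be the unique pair of linear automorphisms with $h_W(w_i) = w'_i$ and $h_L(v_j) = v'_j$ for all valid $i,j$; these exist and are unique because both $(w_i)$ and $(w'_i)$ are bases of $W$, and both $(v_j)$ and $(v'_j)$ are bases of $L$. By construction, the image of the original Kronecker basis under $h$ is a Kronecker basis for $(\sfq_{g(u)},\sfq_{g(u')})$, as desired. There is no real obstacle here beyond keeping track of the fact that the canonical basis of $W$ attached to a pair changes under $g$ — but since the statement only asks for the existence of $h$, we need not compute this change explicitly.
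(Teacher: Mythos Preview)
Your argument is correct, and it follows a genuinely different route from the paper's own proof. The paper invokes Proposition~\ref{prop:coordFree} and writes down $h$ explicitly: under the identifications $W \cong (S^mU^\vee)^\vee$ and $L \cong S^{m-1}U^\vee$, the element $g \in \GL(U)$ induces block maps $(S^mg^\vee)^\vee$ and $c^{-1}S^{m-1}(g^\vee)^{-1}$ on $W$ and $L$, and a direct computation with formula~\eqref{eq:form1} shows $\sfb_{gu}(hv,hw)=\sfb_u(v,w)$, whence the Kronecker relations transfer. You instead bypass this representation-theoretic construction entirely: you note that $W$ is intrinsic to the pencil, that $L$ remains totally isotropic for any linear combination of $\sfb_u,\sfb_{u'}$, and then feed the same decomposition $E=W\oplus L$ into Lemma~\ref{lem:LgivesNF} applied to the new pair to extract a Kronecker basis abstractly. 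Your approach is more elementary and avoids Proposition~\ref{prop:coordFree} altogether; the paper's approach has the advantage of giving an explicit formula for $h$ in terms of $g$, though in the subsequent applications (Remark~\ref{rem:technicalNuisance} and the proof of Theorem~\ref{thm:autosPlus}) only the existence of $h$ is actually used, so nothing is lost.
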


\begin{proof}
There exists some $c \in \bbk$ such that
$\iota(gu)=c(g^\vee)^{-1}\iota(u)$ for all $u \in U$.
Using the isomorphisms from Proposition~\ref{prop:coordFree},
consider
\[
h = \left((S^mg^\vee)^\vee, c^{-1} S^{m-1}(g^\vee)^{-1}\right)
\] in $\GL(W) \times \GL(L)$.
Using the shorthand notation $g_*=(S^kg^\vee)^\vee$ and
$g^*=S^{k-1}(g^\vee)$ for all positive integers $k$,
we find
\begin{align*}
& (h\psi)(\iota(gu)(hf)) = 
(g_*\psi)((\iota(gu))(c^{-1} (g^*)^{-1}f))\\
=&\ \psi( g^* ((g^*)^{-1}\iota(u))((g^*)^{-1}f)) 
= \psi( g^* (g^*)^{-1}(\iota(u)f))
= \psi(\iota(u)f)
\end{align*}
for all $\psi \in W \cong (S^mU^\vee)^\vee$,
$f \in L \cong S^{m-1}(U^\vee)$, and $u \in U$.
From this we conclude that
\[ \sfb_{gu}(hv,hw)=\sfb_u(v,w) \]
for all $v,w \in E$ and $u \in U$.
Thus, $(\sfb_{gu},\sfb_{gu'})$ is in Kronecker normal form
relative to the basis $h(w_0), \ldots, h(v_{m-1})$.
\end{proof}

\begin{remark} \label{rem:technicalNuisance}
It is frequently convenient to assume that $a_n \ne 0$ in the
half-discriminant polynomial or, equivalently, that
the quadratic form $q_1$ is nondegenerate.
Lemma~\ref{lem:changePair} shows that this is often a harmless
assumption.
\end{remark}

\subsection{The quasi-split case}
\label{sec:quasiSplit}

Following the classical terminology in geometry of ruled varieties, a
\emph{generator} of the variety $X$ is a linear subspace of $X$ of maximal
possible dimension (over the algebraic closure).
In the spirit of \cite{Skor}, we say that $X$ is \emph{quasi-split} if
$X = \Bs(\sfq)$ has a generator defined over $\bbk$.
We say $X$ is \emph{split} if all generators of $X$ are defined over
$\bbk$.
We shall see in Corollary~\ref{cor:generators} below that,
when $\bbk$ is algebraically closed,
every $X$ has exactly $2^{2m}$ generators.
(For two general quadrics, this also follows by
Example~14.7.15~of~\cite{Fulton}.)

\begin{thm}\label{thm:genExists}
A generator of $X$ has dimension $m-1$.
If $\bbk$ is algebraically closed, then $X$ contains a generator.
\end{thm}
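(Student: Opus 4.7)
The theorem has two parts, which I will tackle separately: the upper bound $\dim \Pi \le m-1$ (valid over any $\bbk$) and the existence of a generator when $\bbk$ is algebraically closed.

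For the upper bound, I fix any smooth quadric $V(q)$ in the pencil, which exists since by Theorem~\ref{thm:smooth} only finitely many quadrics in $|\sfq|$ are singular. Any generator $\Pi = |V| \subset X$ lies in $V(q)$, so $V \subset E$ is totally singular for $q$ and a fortiori totally isotropic for the polar form $b$. By Corollary~\ref{cor:corank}, $\rad(b)$ is one-dimensional, spanned by some $\omega$ with $q(\omega) \ne 0$ since $V(q)$ is smooth; hence $V \cap \bbk\omega = 0$. Consequently $V$ injects into $E/\bbk\omega$, on which $b$ induces a nondegenerate alternating form of rank $2m$, so $\dim V \le m$ and $\dim \Pi \le m-1$.

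For existence over algebraically closed $\bbk$, I put the pair in the Kronecker normal form of Theorem~\ref{thm:NormalForm} with basis $(w_0,\ldots,w_m,v_0,\ldots,v_{m-1})$ and seek a totally singular $V$ of dimension $m$ in the form $V = \mathrm{span}(v_j + \phi_j : j = 0,\ldots,m-1)$ with $\phi_j = \sum_{k=0}^m \phi_{j,k}\, w_k \in W$. Using \eqref{eq:basicEquations}, the vanishing of $b_0(v_j+\phi_j, v_k+\phi_k)$ and $b_1(v_j+\phi_j, v_k+\phi_k)$ for $j \ne k$ yields the symmetries $\phi_{k,j+1} = \phi_{j,k+1}$ and $\phi_{k,j} = \phi_{j,k}$. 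Iterating these identities on each antidiagonal forces $\phi_{j,k}$ to depend only on the sum $s = j+k$, so I write $\phi_{j,k} = e_s$ with $2m$ unknowns $e_0,\ldots,e_{2m-1}$. Substituting into the normal-form expressions, the requirements $q_0(v_j+\phi_j) = q_1(v_j+\phi_j) = 0$ reduce to
\[
e_{2j+1} = \sum_{i=0}^{m} a_{2i}\, e_{i+j}^2 + r_{2j+1}, \qquad e_{2j} = \sum_{i=0}^{m} a_{2i+1}\, e_{i+j}^2 + r_{2j},
\]
for $j = 0,\ldots,m-1$: a system of $2m$ quadratic equations in $2m$ unknowns.

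Finally, to produce a $\bbk$-solution I homogenize with a new variable $z$ and invoke B\'ezout: the resulting intersection of $2m$ quadrics in $\bbP^{2m}$ has degree $2^{2m}$ and is nonempty. A point at infinity ($z = 0$) satisfies the purely quadratic parts, which are linear in the squares $e_i^2$; after rearranging rows, the coefficient matrix becomes the Sylvester matrix of $g(S) = \sum_{i=0}^m a_{2i} S^i$ and $h(S) = \sum_{i=0}^m a_{2i+1} S^i$. Since $\Delta(1,T) = g(T^2) + T\, h(T^2)$ and, in characteristic 2, $\Delta'(1,T) = h(T^2)$, a common root of $g$ and $h$ would force a double root of $\Delta$, contradicting regularity via Theorem~\ref{thm:smooth}. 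Hence the Sylvester matrix is invertible, all $e_i^2$ vanish at infinity, and therefore every projective solution lies in the affine chart, providing the desired generator. The main obstacle is precisely this final identification of the obstruction at infinity with the separability of $\Delta$ through the resultant; the rest is a direct unwinding in the Kronecker basis.
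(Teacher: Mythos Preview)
Your argument is correct and takes a genuinely different route from the paper. The paper's proof simply invokes Theorem~2.1 of \cite{Debarre}, which asserts that the Fano scheme $F_r(X)$ of $r$-planes in a smooth complete intersection of two quadrics in $\bbP^{2m}$ is nonempty precisely when $r \le m-1$; both the upper bound and existence are outsourced to that reference. Your proof is instead self-contained: the upper bound is the standard isotropic-subspace bound applied to a smooth member of the pencil, and for existence you exploit the Kronecker normal form to write down an explicit system of $2m$ quadratic equations in $2m$ unknowns parametrizing totally singular complements to the radical subspace $W$, then rule out solutions at infinity by recognizing the leading-term matrix as the Sylvester matrix of the even and odd parts of $\Delta$. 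Your approach actually yields more than bare existence: it exhibits the generator as a complement $L$ to $W$, directly anticipating Theorem~\ref{thm:qsGivesr0} and Corollary~\ref{cor:generatorIsL}, and the B\'ezout count of $2^{2m}$ affine solutions foreshadows Corollary~\ref{cor:generators}. One small point worth making explicit: the Sylvester matrix is also singular in the degenerate case where both leading coefficients $a_{2m}$ and $a_{2m+1}$ vanish (a common ``root at infinity'' for $g$ and $h$), but then $t_0^2 \mid \Delta$, again contradicting the separability guaranteed by Theorem~\ref{thm:smooth}.
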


\begin{proof}
Assume $\bbk$ is algebraically closed.
Let $F_r(X)$ be the subscheme of the Grassmannian $G_r(\bbP^{n-1})$
of $r$-dimensional subspaces in $\bbP^{n-1}$ which are contained in $X$.
From Theorem~2.1~of~\cite{Debarre}, we know $F_r(X)$ is non-empty
if and only if $r \le m-1$.  Thus, $X$ has a generator
(which is of dimension $m-1$).
\end{proof}

\begin{remark}
We shall see below that all $X$ are quasi-split precisely when
$\bbk$ is closed under separable quadratic extensions.
\end{remark}

This brings us to the main theorem of this section.
Since $X$ is always quasi-split over an algebraically closed field,
Theorem~\ref{thm:geometricRis0} follows immediately from the following.

\begin{thm}\label{thm:qsGivesr0}
$X$ is quasi-split if and only if there exists a normal form for the pencil 
$\sfq$ with $r_1= \ldots = r_{2m} = 0$.
\end{thm}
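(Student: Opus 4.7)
The plan is to handle the two directions of the biconditional separately. For the $\Leftarrow$ direction, starting from a normal form with $r_0 = \cdots = r_{n-2} = 0$, the span $L := \la v_0, \ldots, v_{m-1} \ra$ in the corresponding Kronecker basis satisfies $q_0|_L = q_1|_L = 0$ by direct inspection of \eqref{eq:main}. Hence $|L|$ is an $(m-1)$-dimensional linear subspace contained in $X$ and defined over $\bbk$, so $X$ is quasi-split.

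For the $\Rightarrow$ direction, I start with a generator $|L|$ of $X$ defined over $\bbk$, so $L \subset E$ is $m$-dimensional with $q_0|_L = q_1|_L = 0$. Taking polar forms, $L$ is automatically totally isotropic for every $b_u$ in the associated pencil. The strategy is to apply Lemma~\ref{lem:LgivesNF} to produce a Kronecker basis with $L = \la v_0, \ldots, v_{m-1} \ra$. In such a basis the restrictions $q_0|_L = \sum r_{2i+1} y_i^2$ and $q_1|_L = \sum r_{2i} y_i^2$ must both vanish, and the linear independence of the $y_i^2$ then forces all $r_j = 0$.

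The main obstacle is to verify the hypothesis $E = W \oplus L$ of Lemma~\ref{lem:LgivesNF}, equivalently $L \cap W = 0$. My plan is to set $k := \dim(L \cap W)$, suppose $k \geq 1$, and derive a contradiction by examining the bilinear map $B : (L \cap W) \times U \to (L + W)^\perp$ given by $B(v, u) = b_u(v, \cdot)$; this is well-defined since $L$ and $W$ are each totally isotropic for every $b_u$ and therefore $b_u(v, \cdot)$ annihilates $L + W$ whenever $v \in L \cap W$. The target $(L+W)^\perp$ has dimension $k$, matching the source in $v$, so $\det B(\cdot, u)$ is a homogeneous polynomial of degree $k$ in $u \in U$, well-defined up to a nonzero scalar. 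The contradiction will come from arguing that, over $\bar{\bbk}$, this polynomial is both not identically zero and has no zeros on $\bbP(U)$.

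For non-vanishing identically: if $\det B \equiv 0$ then every $\Omega(u)$ lies in $L \cap W$, and since the linear span of the image of $\Omega$ equals $W$ (via the Veronese factorization $\Omega: U \to (S^m U^\vee)^\vee \cong W$ described after \eqref{eq:OmegaDef}), this would force $W \subset L$, contradicting $\dim W = m + 1 > m = \dim L$. For non-vanishing at each point of $\bbP(U) \otimes \bar{\bbk}$: a root $u$ of $\det B$ would produce a nonzero $v \in (L \cap W) \otimes \bar{\bbk}$ with $v \in \rad(b_u)$, so that $b_0(v, \cdot)$ and $b_1(v, \cdot)$ would be linearly dependent; at the point $[v] \in X \otimes \bar{\bbk}$ this forces $T_{[v]} V(q_0) = T_{[v]} V(q_1)$, making $T_{[v]} X$ of codimension one rather than two in $|E|$ and contradicting smoothness of $X$ at $[v]$. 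Since a nonzero homogeneous polynomial of degree $k \ge 1$ on $\bbP^1_{\bar{\bbk}}$ must have $k$ roots, we conclude $k = 0$ as desired, and Lemma~\ref{lem:LgivesNF} completes the construction of the required normal form.
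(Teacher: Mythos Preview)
Your proof is correct and, for the crucial step $L\cap W=0$, takes a genuinely different route from the paper.

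Both arguments agree on the overall architecture: the $\Leftarrow$ direction is immediate from the normal form, and for $\Rightarrow$ one must show that a generator $|L|$ is complementary to $|W|$ so that Lemma~\ref{lem:LgivesNF} applies. The divergence is in how the contradiction is extracted from $L\cap W\neq 0$. The paper passes to coordinates on $P=|L+W|$, writes out the restricted quadric explicitly, and argues that $Q|_P$ is singular along $S=|L\cap W|$ for every $Q$ in the pencil, then appeals to the codimension of $T_{[v]}X$. Your argument is coordinate-free: you package the pairing $(v,u)\mapsto b_u(v,\cdot)$ as a bilinear map $B:(L\cap W)\times U\to (L+W)^\circ$ between $k$-dimensional spaces, and observe that a zero of the degree-$k$ determinant $\det B(\cdot,u)$ over~$\bar\bbk$ produces a point $[v]\in X$ lying in $\rad(b_u)$, hence a singular point of $X$. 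Since a homogeneous form of degree $k\ge 1$ on $\bbP^1_{\bar\bbk}$ cannot avoid having zeros, $k=0$. This is cleaner and more uniform in~$k$ than the paper's coordinate computation.

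Two minor remarks. First, your Claim~1 is redundant: the Claim~2 argument already shows $\det B(\cdot,u)\neq 0$ for every nonzero $u$ over $\bar\bbk$, which simultaneously rules out the identically-zero case and the existence of roots, forcing $k=0$ directly. Second, the phrase ``forces $T_{[v]}V(q_0)=T_{[v]}V(q_1)$'' is slightly imprecise: if one of $b_0(v,\cdot),b_1(v,\cdot)$ vanishes identically then the corresponding tangent space is all of $|E|$, not a hyperplane. The correct conclusion (which you in effect use) is that $T_{[v]}X=T_{[v]}V(q_0)\cap T_{[v]}V(q_1)$ has codimension $\le 1$ in $|E|$, contradicting smoothness.
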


\begin{proof}  
Note that a generator $\Lambda$ is equal to $|L|$, where $L$ is totally
isotropic for all quadrics in the pencil.
When $r_i =0$ in the normal form defined by a Kronecker basis
$(w_0,\ldots,w_{m},v_0,\ldots,v_{m-1})$,
the span of $v_0,\ldots,v_{m-1}$ provides such a subspace.
Conversely, given such a subspace $L$,
Lemma~\ref{lem:LgivesNF} provides the desired normal form
under the condition that $L$ is complementary to $W$.

Thus, the difficulty is to prove that $|L| \cap |W| = \emptyset$
(in other words, that $L \cap W = 0$).
We may assume without loss of generality, $\bbk$ is algebraically
closed.

Let $\Pi = |W| \cap X$ be the canonical subspace  from
Theorem~\ref{thm:canPlane}.
Let $S =  |L|\cap |W| = |L| \cap \Pi$.
Suppose $\dim~S = k\ge 0$.
Let $P$ be the span of $|W|$ and $|L|$, its dimension is equal to
$2m-k-1$.
Let $Q = V(q)$ be a quadric from the pencil.
The restriction of $Q$ to $|W|$
is a linear subspace $Y$ of dimension $m-1$ (taken with multiplicity 2).
The quadric $Q$ also contains the linear subspace $|L|$ of
the same dimension.
They intersect along the $k$-dimensional subspace $S$.
We claim that $Q$ must be singular along $S$.

To see this, choose coordinates $x_0,\ldots,x_{2m-k-1}$ for $P$ in
such a way that $|L|$ is given by $x_{m}=\ldots = x_{2m-k-1} = 0$ and
$|W|$ is given by $x_{0}=\ldots = x_{m-k-2} = 0$, so that $S$ is given
by $x_{i}= 0, i\ne m-k-1,\ldots,m-1$.
The restriction $Q|_P$ of $Q$ to $P$ must have
an equation of the form
\[
L(x_{m},\ldots,x_{2m-k-1})^2+\sum_{i=0}^{m-k-2}x_iM_i(x_m,\ldots,x_{2m-k-1})
= 0
\]
where $L$ and $M_1, \ldots, M_{m-k-2}$ are linear homogeneous
polynomials.
Since the polynomial defining this equation does not contain the
variables $x_i, i\ne m-k-1,\ldots,m-1$, the quadric is singular along
the subspace $S$. In other words, the subspace $P$ is tangent to the
quadric $Q$ along $S$. Thus any hyperplane containing $P$ is tangent to
$Q$ along $S$. Since $S$ and $P$ do not depend on $Q$, we can find a
hyperplane tangent to any $Q$ along $S$. Since the tangent space of $X$
at any point is of codimension 2, and it is equal to the intersection of
tangent hyperplanes of quadrics in the pencil, we find a contradiction
with the assumption that $X$ is
smooth.
\end{proof}

The proof of the preceeding theorem has the following useful
consequence:

\begin{corollary} \label{cor:generatorIsL}
If $\Lambda$ is a generator of $X$, then there is a Kronecker normal
form with decomposition $E = W \oplus L$ such that $|L|=\Lambda$.
\end{corollary}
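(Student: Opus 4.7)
The plan is to extract the content that is already essentially established inside the proof of Theorem~\ref{thm:qsGivesr0} and apply Lemma~\ref{lem:LgivesNF}. The corollary should require no new ideas: it merely repackages the statement of Theorem~\ref{thm:qsGivesr0} so that one records \emph{which} complement $L$ is being used.

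First, I would set $L\subset E$ to be the $m$-dimensional linear subspace with $|L|=\Lambda$. Since $\Lambda\subset X=V(q_0)\cap V(q_1)$, the subspace $L$ is totally singular for $q_0$ and $q_1$, and in particular totally isotropic for the associated alternating forms $b_0$ and $b_1$ (and hence for every $\sfb_u$, $u\in U$). Thus $L$ is a candidate for the complement in Lemma~\ref{lem:LgivesNF}; the only missing ingredient is the direct-sum condition $E = W\oplus L$, i.e.\ that $W\cap L = 0$.

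Second, I would invoke the argument already carried out in the proof of Theorem~\ref{thm:qsGivesr0}. The assumption there that $\sfq$ admits a Kronecker normal form with $r_1=\cdots =r_{2m}=0$ was used only to produce a totally singular $L$ of dimension $m$, and the rest of the argument (tangent hyperplanes along $S=|L|\cap |W|\subset \Pi$, together with $X$ being smooth of codimension $2$) is a property of such an $L$ alone. Applying that argument verbatim to our present $L$ forces $|L|\cap|W|=\emptyset$, hence $E = W\oplus L$.

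Finally, Lemma~\ref{lem:LgivesNF} supplies a basis $(v_0,\ldots,v_{m-1})$ of $L$ which, together with the canonical basis $(w_0,\ldots,w_m)$ of $W$, is a Kronecker basis for $E$. This gives the claimed Kronecker normal form with decomposition $E = W\oplus L$ and $|L|=\Lambda$. The only potential obstacle is verifying that the geometric contradiction argument of Theorem~\ref{thm:qsGivesr0} really depends only on $L$ being a totally singular $m$-dimensional subspace and not on anything extracted from a preexisting normal form; a quick check of that proof confirms this, so no further work is needed.
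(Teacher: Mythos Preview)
Your proposal is correct and matches the paper's approach exactly: the paper presents this corollary as an immediate consequence of the proof of Theorem~\ref{thm:qsGivesr0}, and your unpacking---take $L$ with $|L|=\Lambda$, note it is totally isotropic for all $\sfb_u$, invoke the tangent-hyperplane contradiction from that proof to get $|L|\cap|W|=\emptyset$, then apply Lemma~\ref{lem:LgivesNF}---is precisely what is intended. Your check that the contradiction argument depends only on $L$ being an $m$-dimensional totally singular subspace (not on a preexisting $r=0$ normal form) is the right verification to make.
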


\subsection{Applications to rationality}
\label{sec:applications}

This canonical subspace $\Pi$ from Theorem~\ref{thm:canPlane}
allows us to prove Corollary~\ref{cor:Xrational}
from the introduction:

\begin{proof}[Proof of Corollary~\ref{cor:Xrational}]
 First we consider the case $m=2$, where we are dealing with quartic
del Pezzo surfaces in $|E|\cong \bbP^4$.
The canonical $(m-2)$-plane $\Pi$ is thus a canonical point.
It is known that any del Pezzo surface of degree $\ge 3$ with a rational
point is unirational over the base field (see
Theorem~3.5.1~of~\cite{MT}).

Now we consider the case where $m > 2$. Consider the projection
$p:X\setminus \Pi\to\bbP^{m+1}$ with center at $\Pi$. Since $\Pi$ is
defined over $\Bbbk$, the projection is defined over $\Bbbk$.
The morphism $p$ extends to a morphism $p':X_\Pi\to \bbP^{m+1}$
where $X_\Pi$ is the blow-up of $X$ at center $\Pi$.

Let $x$ be a point in $X \setminus \Pi$.
Let $F_x$ be the closure of $p^{-1}(p(x))$ in $\bbP^{n-1}$.
We claim that $F_x$ is the span $\la x, \Pi \cap T_x X \ra$
where $T_x X$ is the embedded tangent space to $X$ at $x$.
Let $v \in E$ be a vector such that $x = [v]$ and let
$e_0,\ldots,e_{m-2}$ be a basis of $\Pi$.
The subvariety $F_x$ is equal to the closure of the
intersection of $X \setminus \Pi$
and the span $\la \Pi,x\ra$ of $\Pi$ and $x$.
The span $\la \Pi,x\ra$ is equal to $[tv+\sum_{i=0}^{m-2} s_ie_i]$,
where $[s_0,\ldots,s_{m-2},t]$ are projective coordinates
in $\la \Pi,x\ra$.
Plugging in the equations $q_0 = q_1 = 0$ of $X$ we get,
for $k = 0,1$, that
\begin{align*}
&q_k\left(tv+\sum_{i=0}^{m-2} s_ie_i\right)\\
=&
t^2q_k(v)+
q_k\left(\sum_{i=0}^{m-2} s_ie_i\right)
+t\sum_{i=0}^{m-2}s_ib_k(e_i,v) \\
=& t\sum_{i=0}^{m-2}s_ib_k(e_i,v).
\end{align*}
This shows that $F_x$ is the span
$\la x, \Pi\cap T_x(X)\ra$ as claimed.

First, assume that $\bbk$ is algebraically closed.
We know that if $\Pi\cap T_x(X)$ is non-empty then $\dim \Pi\cap T_x(X)$ is
$m-2-r(x)$, where $r(x)$ is the rank of the matrix
\[
\begin{pmatrix}b_0(e_1,v)&\ldots& b_0(e_{m-2},v)\\
b_1(e_1,v)&\ldots &b_1(e_{m-2},v)\end{pmatrix}.
\]
Let $C$ be the set of points $x \in X$ where $r(x) < 2$:
these are the points such that, for some quadric $Q$
given by $\lambda q_0+\mu q_1=0$,
the associated bilinear form $b$ satisfies
$b(w,v)=0$ for all $w \in E$ representing $[w] \in \Pi$.
Suppose $C$ is equal to all of $X$.
By Theorem \ref{thm:genExists}, the space $X$ contains a
$(m-1)$-dimensional projective subspace $\Lambda$.
Choose a point $x$ from $\Lambda$.
Note that the tangent space $T_x(Q)$ is defined by the fact that
$b(w,v)=0$ for all $w \in E$.
Thus $T_x(Q)$ contains the $(m-1)$-dimensional projective
subspace $\Lambda$ and the $(m-2)$-dimensional projective subspace
$\Pi$.
Since $\Lambda\cap \Pi = \emptyset$, we obtain that $T_x(Q) \cap Q$
contains a projective subspace of dimension $2m-2$.
However, by Corollary~\ref{cor:corank}, the corank
of any quadric in the pencil is equal to $1$.
Hence $Q$ cannot contain
projective subspaces of dimension greater than $m-1$.
This contradicts $C$ being the whole space.
Thus, for a general point $x\in X$, we have $r(x) = 2$.

Consequently, $\Pi\cap T_x(X)$ is empty if $m\le 3$, or
of dimension $\ge m-4$ if $m\ge 4$.
This implies that $F_x=\la x, \Pi\cap T_x(X)\ra$
consists of $x$ if $m\le 3$ and
$\dim(F_x) = m-3$ if $m\ge 4$.
If $m = 2$, $X$ is a del Pezzo surface of degree
$4$ and the projection $p$ is a birational map onto a cubic surface in
$\bbP^3$.
If $m = 3$, we obtain that the projection $p$ is a birational map
onto $\bbP^4$.
If $m > 3$, we obtain that the general fiber $(p')^{-1}(p(x))$ of the
projection map $p' : X_\Pi\to \bbP^{m+1}$ is isomorphic to the blow-up of
the $(m-3)$-dimensional subspace $\la x, \Pi\cap T_x(X)\ra$
along the $(m-4)$-dimensional subspace $\Pi\cap T_x(X)$.
Thus the general fiber is a projective space of dimension $m-3$.
Thus, $X$ is birationally isomorphic to a projective $(m-3)$-bundle over
$\bbP^{m+1}$.

Now, assume $\Bbbk$ is any perfect field. Since the projection is a
$\bbk$-rational map, we immediately get that $X$ is rational if $m =
3$.
It remains to consider $m>3$.
By construction, $X_\Pi$ is a subvariety of a projective bundle $\bbP(E)$
over $\bbP^{m+1}$.
Moreover, the general fibers of $X_\Pi \to \bbP^{m+1}$ are linear
subspaces of the fibers of $\bbP(E) \to \bbP^{m+1}$.
Thus, over some open subscheme $S$ of $\bbP^{m+1}$, the subvariety $X_\Pi$
restricts to a projective bundle over $S$
(not just a Severi-Brauer scheme over $S$).
Thus $X_\Pi$ is rational.
\end{proof}

Yu. Prokhorov asked us for a geometric interpretation of the canonical
point $\Pi$ in the plane model of a del Pezzo surface of degree 4.
We consider this question in Remark~\ref{rem:ptInPlaneModel}.

\section{Isomorphisms of normal forms}
\label{sec:NormalFormIsos}

The main goal of this section is to prove
Theorem~\ref{thm:OtherNormalForms}.
In order to do this, we need to study isomorphisms between pairs of
quadratic forms in Kronecker normal form.
The associated bilinear form of any such pair is always the same, so
any isomorphism of pairs of quadratic forms is an automorphism
of a basic singular pair of alternating bilinear forms.

Thus, our first task is to compute the automorphisms of a basic singular
pair of alternating bilinear forms.
Next, we collect some results about the finite dimensional algebra
$A$ from the introduction.
We then use this algebra to determine which automorphisms of basic
singular pairs correspond to which isomorphisms of pairs of
quadratic forms.

\subsection{Automorphisms of basic singular pairs}

Let $(b_0,b_1)$ be a basic singular pair of alternating bilinear forms
(for example, obtained from a regular pair of quadratic forms),
and let $w_0, \ldots, w_m, v_0, \ldots, v_{m-1}$ be a Kronecker basis
for the underyling vector space $E$ of dimension $n=2m+1$.

An \emph{automorphism} of the pair $(b_0, b_1)$
is an element $g \in \GL(E)$ such that
\[ b_0(gv,gv)=b_0(v,v)
 \textrm{ and }
b_1(gv,gv)=b_1(v,v) \]
for any $v \in V$.

\begin{lem} \label{lem:autoAlt}
The group of automorphisms $\Aut(b_0,b_1)$ of the pair $(b_0,b_1)$
consists of elements $g\in \GL(E)$ of the form 
\begin{equation} \label{eq:gs}
g(w_i) = w_i,\ g(v_i) = v_i+\sum_{k=0}^{m}s_{i+k}w_{k}
\end{equation}
where $s_0, \ldots, s_{n-2}$ are any elements in $\bbk$.
\end{lem}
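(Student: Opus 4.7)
The plan is to determine $g$ in two stages: its action on the canonical radical subspace $W=\mathrm{span}(w_0,\ldots,w_m)$ first, and then on the complement $L=\mathrm{span}(v_0,\ldots,v_{m-1})$. Since $g\in\Aut(b_0,b_1)$ preserves every form $\lambda b_0+\mu b_1$, it preserves the one-dimensional radical of each such form, which by~\eqref{eq:OmegaDef} is spanned by $\Omega(\lambda,\mu)=\sum_{i=0}^m\lambda^{m-i}\mu^i w_i$. Hence $g(\Omega(\lambda,\mu))=c(\lambda,\mu)\Omega(\lambda,\mu)$; since both sides are polynomials in $(\lambda,\mu)$ of degree $m$, the scalar factor $c$ must be constant, and matching the resulting homogeneous coefficients forces $g(w_i)=c\cdot w_i$ for all $i$. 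The normalization corresponding to the statement is $c=1$, so $g$ fixes each $w_i$.

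For the action on $L$, I would expand $g(v_i)=\sum_{k=0}^m\alpha_{k,i}w_k+\sum_{l=0}^{m-1}\beta_{l,i}v_l$ and feed this into the defining relations of the Kronecker basis. The pairings $b_0(w_i,g(v_j))=\delta_{i,j+1}$ and $b_1(w_i,g(v_j))=\delta_{i,j}$ immediately yield $\beta_{l,i}=\delta_{l,i}$, so $g(v_i)=v_i+\sum_k\alpha_{k,i}w_k$. The remaining isotropy conditions $b_0(g(v_i),g(v_j))=0=b_1(g(v_i),g(v_j))$ expand, using characteristic~$2$ and the Kronecker relations, to the pair of symmetries
\[
\alpha_{i,j}=\alpha_{j,i} \quad\text{and}\quad \alpha_{i+1,j}=\alpha_{j+1,i}.
\]

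The heart of the proof is the combinatorial claim that these two relations force $\alpha_{k,i}$ to depend only on $k+i$, so that setting $s_n:=\alpha_{k,i}$ whenever $k+i=n$ gives a well-defined tuple $(s_0,\ldots,s_{n-2})$ parametrizing the group. I would prove this by fixing an antidiagonal $\{(k,i):k+i=n\}$ and observing that the involutions $(k,i)\mapsto(i,k)$ and $(k,i)\mapsto(i+1,k-1)$ together act transitively on its valid lattice points, their composition stepping one unit along the antidiagonal. I expect this combinatorial check to be the main obstacle, with the delicate cases being the boundary pairs where $k=m$, since the first involution then does not apply and one must move inward via the second. The converse is immediate: any $g$ of the stated form is upper-unitriangular in the ordered basis $(w_0,\ldots,w_m,v_0,\ldots,v_{m-1})$, hence lies in $\GL(E)$, and a short substitution verifies $g^*b_0=b_0$ and $g^*b_1=b_1$.
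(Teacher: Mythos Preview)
Your approach is essentially identical to the paper's: show $g$ fixes each $w_i$, use the pairings $b_1(g w_i,g v_j)=\delta_{ij}$ to force $\beta_{l,i}=\delta_{l,i}$, extract the two symmetries $\alpha_{i,j}=\alpha_{j,i}$ and $\alpha_{i+1,j}=\alpha_{j+1,i}$ from $b_1(gv_i,gv_j)=b_0(gv_i,gv_j)=0$, and conclude that $\alpha_{k,i}$ depends only on $k+i$. The paper dispatches the first step in a single clause (``$g$ must fix each $w_i$ since they are canonical'') and treats the antidiagonal combinatorics as immediate, so the obstacle you anticipate is not one the paper lingers on; your $\Omega$-argument simply makes the ``canonical'' claim explicit, and your observation that one only obtains $g(w_i)=c\,w_i$ is well taken---the scalings $w_i\mapsto c\,w_i$, $v_i\mapsto c^{-1}v_i$ are genuine automorphisms of $(b_0,b_1)$ not of the stated form, though in the paper's applications the overlying quadratic forms force $c^2=1$ and hence $c=1$.
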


Explicitly, the automorphisms have the form
\[
g = \begin{pmatrix}I_{m+1}&S\\
0_{m,m+1}&I_{m}\end{pmatrix},
\]
where
\[
S = \begin{pmatrix}
s_0&s_1&\ldots&s_{m-1}\\
s_1&s_2&\ldots&s_m\\
\vdots&\vdots&\vdots&\vdots\\
s_m&s_{m+1}&\ldots&s_{2m-1}
\end{pmatrix}
\]
is a \emph{catalecticant matrix}
$\textrm{Cat}_{m-1}(s_0,\ldots,s_{2m-1})$ (see \cite{CAG}, 1.4.1).

\begin{proof}
One checks directly that the given elements
are automorphisms of $(b_0,b_1)$ via \eqref{eq:basicEquations}.
It remains to check that these are the only automorphisms.

Let $g$ be an automorphism of $(b_0,b_1)$.
Note that $g$ must fix each $w_i$ since they are canonical.
It remains to consider $v_0, \ldots, v_{m-1}$.
Thus
\[ g(v_i)=h(v_i)+\sum_{k=0}^m l_{ik}w_k  \]
for some $h \in \GL(L)$ and elements $l_{ik}$ in $\bbk$.
Since
\[
\delta_{ij}=b_1(gv_i,gw_j)=b_1(hv_i,w_j)
\]
we conclude that $h$ is the identity.
Since
\[
0=b_1(gv_i,gv_j)=
\sum_{k=0}^m l_{jk} b_1(v_i,w_k)
+\sum_{k=0}^m l_{ik} b_1(w_k,v_j)
= l_{ij} + l_{ji}
\]
we conclude that $l_{ij}=l_{ji}$.
Similarly, using $b_0$ we conclude that $l_{j(i+1)}=l_{i(j+1)}$.
We obtain that $l_{i(j+1)}=l_{(i+1)j}$ and thus the values of
$l_{ij}$ depend only on the sum of their indices: $l_{ik}=s_{i+k}$.
\end{proof}

\subsection{Results on \texorpdfstring{$\bbk$}{k}-algebras}

Here we collect some facts about finite $\bbk$-algebras which we will
need later.
All are standard results except for Lemma~\ref{lem:BbasisSquare}. 

Consider a separable polynomial
\[ f(T) = a_n T^n + \cdots +a_1 T + a_0 \]
in $\bbk[T]$ of degree $n$.
Consider the $\bbk$-algebra $A = \bbk[T]/(f(T))$ and let $t$ be the image
of $T$ under the map $\bbk[T] \to A$.

Note that since $f$ is a separable polynomial, we may also write
$A$ as a direct sum $A = A_1 \oplus \cdots \oplus A_l$ where each
$A_i$ is a separable field extension of $\bbk$.
Indeed, we have isomorphisms $A_i \cong \bbk[T]/(f_i(T)) \cong A/(f_i(t))$
where $f_i$ is an irreducible polynomial dividing $f$.

Define polynomials $g_1, \ldots, g_l$ in $\bbk[T]$ via
\[ f(T) = f_i(T)g_i(T) \ . \]
Since the polynomials $g_i$ are coprime, we may write
\begin{equation} \label{eq:epsilon}
1 = \epsilon_1 + \cdots + \epsilon_l
\end{equation}
where $\epsilon_i \in ( g_i(t) )$ and they satisfy relations
$\epsilon_i \epsilon_j = 0$ when $i\ne j$,
and $\epsilon_i^2 = \epsilon_i$.
In other words, $\epsilon_1, \ldots, \epsilon_l$
form an orthogonal set of idempotents in $A$.

Multiplication by an idempotent $\epsilon_i$ defines a homomorphism
from $A$ into the corresponding summand $A_i$.
Specifically, if $h$ is any polynomial in $\bbk[T]$ we have
\begin{equation} \label{eq:idemAction}
\epsilon_i h(t) \equiv h(t) \mod (f_i(t))
\end{equation}
in $A_i \cong A/(f_i(t))$.
If $f_i$ is of degree 1 with root $\alpha_i$,
then multiplication by $\epsilon_i$ corresponds to the $\bbk$-algebra
homomorphism $A \to A_i \simeq \bbk$ determined by $t \mapsto \alpha_i$.

For any $a\in A$, the linear map $x\mapsto ax$ is an endomorphism of the
vector space $A$ over $\bbk$, we denote by $\tr_{A/\bbk}(a)$ its trace.
The formula
\[ \la a,b\ra = \tr_{A/\bbk}(ab) \]
defines a symmetric bilinear form on $A$.
Its restriction to each summand $A_i$ is the usual trace for a separable
extension of fields.
Since $A$ is separable, the trace form is non-degenerate.
In particular, the natural homomorphism
$A\to A^\vee, a\mapsto (x\mapsto \tr_{A/\bbk}(ax))$
of vector spaces over $\bbk$ is a bijection.

If $f$ splits completely into linear factors
(for example, if $\bbk$ is separably closed),
then $A \cong \bbk^n$.
If $\alpha_1, \ldots, \alpha_n$ are the roots of $f$,
then a canonical isomorphism $A \cong \bbk^n$ is defined via
\[
h(t) \mapsto \left( h(\alpha_1), \ldots, h(\alpha_n) \right)
\]
for any polynomial $h$ in $k[T]$.
In this case, the trace can be computed as
\[
\tr_{A/\bbk}\left(h(t)\right) = \sum_{i=1}^n h(\alpha_i) \ .
\]
By passing to a separable closure, one can compute the trace even when
$f$ does not split into linear factors over the original field.

Consider a basis of $A$ given by the elements 
\begin{equation} \label{eq:def_d_i}
d_i = a_{i+1}+a_{i+2}t+\cdots+a_nt^{n-1-i}
\end{equation}
for $i = 0,\ldots,n-1$.
Using the recurrence relation $d_i=t d_{i+1}+a_{i+1}$,
we get the identity in $A[X]$
\begin{equation}\label{eq:GenFunc_def_u}
f(X) = (X-t)(d_0 + d_1 X + \cdots + d_{n-1}X^{n-1}) \ .
\end{equation}
Note that $d_{n-1}$ spans the ``constant'' subalgebra
$\bbk \subset A$.

Let $f'(T)$ denote the formal derivative of $f(T)$.
Since $f(T)$ is a separable polynomial, $f'(T)$ is coprime to $f(T)$
and thus $f'(t)$ is an invertible element in $A$.

The following proposition shows that the elements $\frac{d_i}{f'(t)}$
form the dual basis of the basis $1,t,\ldots,t^{n-1}$
with respect to the trace form. 

\begin{prop} \label{prop:dualBasis}
$ \tr_{A/\bbk}\left(d_i\frac{t^j}{f'(t)}\right) = \delta_{ij}$
for $i,j$ in $0,\ldots, n-1$.
\end{prop}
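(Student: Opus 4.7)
The plan is to reduce to the case where $f$ splits completely and then invoke the Lagrange interpolation formula. This identity (sometimes attributed to Euler) is a classical result about the dual basis to $1,t,\ldots,t^{n-1}$ under the trace form of a separable extension, and the argument is well known; I would simply adapt it to the statement as formulated.

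First, I would observe that both sides of the identity are compatible with separable base change: for any separable extension $\bbk'/\bbk$, the algebra $A\otimes_{\bbk}\bbk'$ is again of the form $\bbk'[T]/(f(T))$, the elements $d_i$ and $f'(t)$ have the same expressions, and $\tr_{A\otimes\bbk'/\bbk'}=\tr_{A/\bbk}\otimes_{\bbk}\bbk'$. Therefore it suffices to verify the identity after passing to a separable closure; I will assume throughout that $f$ splits completely as $f(T)=a_n\prod_{k=1}^n(T-\alpha_k)$ with distinct roots $\alpha_1,\ldots,\alpha_n$.

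In this split case, the canonical isomorphism $A\cong\bbk^n$ sends $h(t)\mapsto(h(\alpha_1),\ldots,h(\alpha_n))$, so $\tr_{A/\bbk}(h(t))=\sum_k h(\alpha_k)$. Evaluating the key identity \eqref{eq:GenFunc_def_u} at the $k$-th component (i.e.\ substituting $t=\alpha_k$) yields
\[
\frac{f(X)}{X-\alpha_k}=\sum_{i=0}^{n-1}d_i(\alpha_k)X^i \quad\text{in }\bbk[X].
\]
In particular, $f'(\alpha_k)$ equals the value at $X=\alpha_k$ of $f(X)/(X-\alpha_k)$ and is nonzero by separability. Combining these ingredients,
\[
\tr_{A/\bbk}\left(d_i\frac{t^j}{f'(t)}\right)=\sum_{k=1}^n\frac{\alpha_k^{\,j}\,d_i(\alpha_k)}{f'(\alpha_k)}.
\]

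The final step is Lagrange interpolation: since $X^j$ has degree $j<n$, it coincides with its interpolant at the nodes $\alpha_1,\ldots,\alpha_n$,
\[
X^j=\sum_{k=1}^n\alpha_k^{\,j}\cdot\frac{f(X)/(X-\alpha_k)}{f'(\alpha_k)}=\sum_{i=0}^{n-1}X^i\left(\sum_{k=1}^n\frac{\alpha_k^{\,j}\,d_i(\alpha_k)}{f'(\alpha_k)}\right).
\]
Comparing coefficients of $X^i$ on the two sides gives precisely $\sum_k\alpha_k^{\,j}d_i(\alpha_k)/f'(\alpha_k)=\delta_{ij}$, which establishes the claim. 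The only subtle point is the base change step, but this is immediate from separability of $f$; the remainder of the argument is a routine manipulation with the partial fraction decomposition of $1/f(X)$.
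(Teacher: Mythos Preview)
Your proof is correct and follows essentially the same route as the paper's: both reduce to the split case by base change, invoke the Lagrange/Euler identity $X^j=\sum_k \alpha_k^{\,j}\,\dfrac{f(X)/(X-\alpha_k)}{f'(\alpha_k)}$, combine it with the factorization $f(X)=(X-t)\sum_i d_iX^i$, and read off the result by comparing coefficients of $X^i$. The paper packages the final computation as a trace on $A\llbracket X\rrbracket$ rather than summing over components, but the argument is the same.
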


\begin{proof}
In the case when $A$ is a field, this can be found in, for example,
Proposition~III.1.2~of~\cite{Lang94}.
We give the proof here since it is short and uses formulas that we we
will need later.
We may assume without loss of generality that $\bbk$ is separably
closed.

Let $\alpha_1, \ldots, \alpha_n$ be the roots of $f$.
We use the following \emph{Euler's identity}:
\begin{equation}\label{eq:Euler}
X^k - \sum_{i=1}^n \frac{f(X)}{X-\alpha_i}
\frac{\alpha_i^k}{f'(\alpha_i)} = 0, \quad k = 0, \ldots, n-1
\end{equation}
in the ring $A\llbracket X \rrbracket$.
To see this, one uses that
$f'(\alpha_i) = \prod_{j\ne i}(\alpha_i-\alpha_j)$,
then checks that the expression on the left-hand
side is a polynomial of degree $< n$ and has $n$ roots
$\alpha_1,\ldots,\alpha_n$, hence it must be zero.

One can extend the trace $\tr_{A/\bbk}$ to a $\bbk$-linear function
$A\llbracket X\rrbracket \to \bbk \llbracket X\rrbracket$
by applying the trace function to each coefficient of $X^i$.
Thus, using \eqref{eq:GenFunc_def_u}, we obtain 
\[
\tr_{A/\bbk}\left( \frac{f(X)}{X-t} \frac{t^k}{f'(t)} \right) = 
\tr_{A/\bbk}\left( \left( \sum_{i=0}^{n-1}d_iX^i \right)
\frac{t^k}{f'(t)}\right) = X^k,\]
for every $k$ in $0, \ldots, n-1$.
Comparing the coefficients, we get the assertion.
\end{proof}

We need an explicit formula for squaring an element with respect to the
basis $\{ d_0, \ldots, d_{n-1} \}$.

\begin{lem} \label{lem:BbasisSquare}
Given the equation
\[
r_0d_0+\ldots+r_{n-1}d_{n-1}=(s_0d_0+\ldots+s_{n-1}d_{n-1})^2
\]
for some coefficients $s_0,\ldots,s_{n-1}$ in $\bbk$,
we have
\[
r_k = \sum_{j=0}^{n-1}s_j^2a_{2j+1-k}
\]
for all $k$ in $0,\ldots, n-1$.
(We assume by convention $a_i=0$ when $i < 0$ or $i > n$.)
\end{lem}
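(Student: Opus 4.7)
The plan is to reduce the lemma, via Frobenius in characteristic~$2$, to a single basis identity
\[
d_j^{\,2} \;=\; \sum_{k=0}^{n-1} a_{2j+1-k}\, d_k \quad \text{in } A,
\]
for each $j \in \{0,\ldots,n-1\}$ (with the convention $a_i = 0$ outside $0 \le i \le n$). Indeed, since $\cha(\bbk)=2$,
\[
\Bigl(\sum_j s_j d_j\Bigr)^{\!2} = \sum_j s_j^2 d_j^{\,2} = \sum_k\Bigl(\sum_j s_j^2 a_{2j+1-k}\Bigr) d_k,
\]
so once the basis identity is in hand, reading off coefficients gives $r_k = \sum_j s_j^2 a_{2j+1-k}$, as desired.

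To prove the basis identity, I would work in the polynomial ring $A[X]$ with the generating function $D(X) := \sum_{i=0}^{n-1} d_i X^i$. The starting point is the identity \eqref{eq:GenFunc_def_u}, i.e. $f(X) = (X-t)\,D(X)$. Multiplying both sides by $D(X)$ yields
\[
f(X)\,D(X) \;=\; (X-t)\,D(X)^2.
\]
In characteristic $2$, Frobenius gives $D(X)^2 = \sum_{i} d_i^{\,2} X^{2i}$, so the right-hand side equals $\sum_{i} d_i^{\,2} X^{2i+1} + t \sum_{i} d_i^{\,2} X^{2i}$, where the first term contributes only to odd powers of $X$ and the second only to even powers. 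Thus the coefficient of $X^{2j+1}$ on the right-hand side is exactly $d_j^{\,2}$.

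On the other hand, expanding the left-hand side as $f(X)\,D(X) = \sum_{i,k} a_i d_k X^{i+k}$, the coefficient of $X^{2j+1}$ is $\sum_{k} a_{2j+1-k}\, d_k$. Matching coefficients of $X^{2j+1}$ gives the basis identity, and the lemma follows.

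I do not anticipate a real obstacle here; the only subtlety is recognizing that the Frobenius in characteristic $2$ makes the two halves of $(X-t)D(X)^2$ live in disjoint (odd vs.\ even) degree ranges, so comparing coefficients in the product $f(X)\,D(X)$ cleanly extracts the desired expansion of $d_j^{\,2}$. The apparent obstruction of inverting a Hankel-type change-of-basis matrix from $\{1,t,\ldots,t^{n-1}\}$ to $\{d_0,\ldots,d_{n-1}\}$ is thereby sidestepped entirely.
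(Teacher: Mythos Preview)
Your argument is correct and is genuinely different from the paper's. The paper proves the identity by showing
\[
\tr_{A/\bbk}\!\left(d_j^{\,2}\frac{t^k}{f'(t)}\right)=a_{2j+1-k},
\]
which it obtains by rearranging Euler's partial-fraction identity \eqref{eq:Euler}, formally differentiating in $X$, and then invoking the dual-basis Proposition~\ref{prop:dualBasis} to read off the coefficients $r_k$. Your route bypasses the trace, the dual basis, and the differentiation step entirely: multiplying \eqref{eq:GenFunc_def_u} by $D(X)$ and using Frobenius to separate $(X-t)D(X)^2$ into odd and even powers of $X$ gives $d_j^{\,2}=\sum_k a_{2j+1-k}d_k$ directly in $A$. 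This is more elementary and self-contained; the paper's approach, on the other hand, reuses machinery (Euler's identity, the dual basis) that it has already developed for Proposition~\ref{prop:dualBasis}, so in context it is a natural continuation rather than an independent computation.
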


\begin{proof}
Using Proposition~\ref{prop:dualBasis}, it suffices to show that
\[
\tr_{A/\bbk} \left( d_j^2\frac{t^k}{f'(t)} \right) = a_{2j+1-k}
\]
for all $j,k$ in $0,\ldots, n-1$.

Rearranging Euler's equation \eqref{eq:Euler} we obtain:
\begin{align*}
\frac{X^k}{f(X)}
&= \sum_{i=1}^n \frac{1}{X-\alpha_i} \frac{\alpha_i^k}{f'(\alpha_i)} \ .
\\
\intertext{Now, differentiating with respect to $X$, we have}
\frac{kX^{k-1}f(X)-X^kf'(X)}{f(X)^2}
&= \sum_{i=1}^n \frac{-1}{(X-\alpha_i)^2} \frac{\alpha_i^k}{f'(\alpha_i)}
\\
\frac{\partial}{\partial X} \left(X^kf(X)\right)
&= \sum_{i=1}^n \left(\frac{f(X)}{X-\alpha_i}\right)^2
\frac{\alpha_i^k}{f'(\alpha_i)} \ .
\\
\intertext{Using \eqref{eq:GenFunc_def_u}, we conclude}
\frac{\partial}{\partial X} \left( \sum_{i=0}^n a_iX^{k+i} \right)
&=
\tr_{A/\bbk} \left( \left(\sum_{j=0}^{n-1} d_j^2X^{2j} \right)
\frac{t^k}{f'(t)} \right)
\\
\sum_j a_{2j+1-k}X^{2j}
&=
\sum_{j=0}^{n-1} \tr_{A/\bbk} \left( d_j^2\frac{t^k}{f'(t)} \right) X^{2j}
\end{align*}
as desired.
Note that the assumption that $\bbk$ has characteristic $2$ is essential!
\end{proof}

\subsection{Isomorphisms of Pairs of Quadratic Forms}

Let $(q_0,q_1)$ be a regular pair of quadratic forms
in normal form with respect to a Kronecker basis
$\calB = (w_0,\ldots,w_m,v_0,\ldots,v_{m-1})$ for $E$.
Note that since $w_0, \dots, w_m$ are canonical,
the polynomial $\Delta$ and its coefficients
$a_0, \ldots, a_n$ are the same regardless of the choice of basis
$\calB$.
However, the coefficients $r_0, \ldots, r_{n-2}$ from
Theorem~\ref{thm:NormalForm} are more subtle.

Assume that $a_n \ne 0$ in $\Delta$ or, equivalently,
that $q_1$ is nondegenerate
(see Remark~\ref{rem:technicalNuisance}).
Define
\[ f(T) := \Delta(1,T) = a_0 + \cdots + a_nT^n \ . \]
Setting $A = \bbk[T]/(f(T))$, we have an algebra as in the previous
section.

The \emph{$r$-invariant} of the normal form is the element $r \in A$
given by
\begin{equation} \label{eq:rInvDef}
r = r_0 d_0 + \cdots + r_{n-2} d_{n-2}
\end{equation}
where $d_0, \ldots, d_{n-1}$ is the basis for $A$
defined in \eqref{eq:def_d_i}.
Note that the basis element $d_{n-1}$ spanning $\bbk$ does not appear in
the expression \eqref{eq:rInvDef},
so the set of possible $r$-invariants are in bijection with
the quotient space $A/\bbk$.

For $g \in \GL(E)$, let $(q'_0,q'_1)$ be the pair given by
\[
q'_0(v) = q_0(gv) \textrm{ and } q'_1(v) = q_1(gv)
\]
for $v \in E$.
Note that the half-discriminant polynomial of $(q'_0,q'_1)$
remains the same.
Let $r'$ be the the $r$-invariant of the new normal form.

We assume, without loss of generality,
that $(q'_0,q'_1)$ is also in Kronecker normal form with respect to the
basis $\calB$.
Consequently, both pairs have the same pair $(b_0,b_1)$ of
associated bilinear forms and we may take $g \in \Aut(b_0,b_1)$.

Any $g \in \Aut(b_0,b_1)$ is determined by the elements
$s_0, \ldots, s_{n-2}$ from \eqref{eq:gs}.
We construct an element
\begin{equation} \label{eq:sInvDef}
s = s_0 d_0 + \cdots + s_{n-2} d_{n-2}
\end{equation}
in $A$ as with the $r$-invariant above.

Reversing the process,
one checks that this gives rise to a group homomorphism
\begin{equation} \label{eq:phiDef}
\phi : A \to \Aut(b_0,b_1) \subset \GL(E)
\end{equation}
where $A$ is viewed as an additive group.
Note that the kernel of $\phi$ is the subgroup $\bbk$ since $d_{n-1}$
spans $\bbk$.

Theorem~\ref{thm:OtherNormalForms} is a consequence of the following.

\begin{thm} \label{thm:ArtinSchreierFormula}
If $g = \phi(s)$ for some $s\in A$, then
\[
r' \equiv r + \wp(s) \mod \bbk
\]
where $\wp : A \to A$ is the Artin-Schreier map $s \mapsto s^2 + s$.
\end{thm}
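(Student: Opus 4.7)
\textbf{Proof plan for Theorem~\ref{thm:ArtinSchreierFormula}.}
The plan is to compute the effect of the automorphism $g=\phi(s)$ on the coefficients $r_k$ directly in coordinates, then interpret the resulting formula in the algebra $A$ via Lemma~\ref{lem:BbasisSquare}.

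First, I would translate the action of $g$ into the dual coordinates $(x_i,y_j)$. Since $g(w_i)=w_i$ and $g(v_j)=v_j+\sum_{k=0}^{m}s_{j+k}w_k$ by Lemma~\ref{lem:autoAlt}, the pullback on $E^\vee$ is $g^*(x_i)=x_i+\sum_{j}s_{i+j}y_j$ and $g^*(y_j)=y_j$. Then I would substitute into the normal forms \eqref{eq:main}. The crucial characteristic-$2$ simplifications are: (i) $(x_i+\sum_j s_{i+j}y_j)^2=x_i^2+\sum_j s_{i+j}^2 y_j^2$, with no mixed $x_i y_j$ or $y_j y_{j'}$ terms; and (ii) in the contribution $\sum_{i,j}s_{i+1+j}y_iy_j$ (resp.\ $\sum_{i,j}s_{i+j}y_iy_j$) coming from $g^*(x_{i+1})y_i$ (resp.\ $g^*(x_i)y_i$) in $q_0$ (resp.\ $q_1$), the off-diagonal terms cancel in pairs because the coefficient $s_{i+1+j}$ (resp.\ $s_{i+j}$) depends only on $i+j$. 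Thus $g^*(q_0)$ and $g^*(q_1)$ are again in Kronecker normal form with the same half-discriminant coefficients $a_i$, and only the $y_j^2$ coefficients change.

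Next, I would collect the new coefficients. For $k\in\{0,\dots,n-2\}$ a bookkeeping computation gives
\[
r'_k = r_k + s_k + \sum_{l=0}^{n-1} a_{2l+1-k}\,s_l^2,
\]
where $a_i=0$ for $i<0$ or $i>n$ (the ranges of summation in the two cases $k$ even and $k$ odd merge into this unified formula after the index substitution $l=i+j$). I would verify this by checking that the $y_j^2$ coefficient in $g^*(q_0)$ is $r_{2j+1}+s_{2j+1}+\sum_{i=0}^m a_{2i}s_{i+j}^2$ (and the symmetric statement for $q_1$), and then writing $l=i+j$ so that $2l+1-k=2i+1$ or $2i$ as appropriate.

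Finally, I would invoke Lemma~\ref{lem:BbasisSquare}: writing $s=\sum_{k=0}^{n-1}s_k d_k$ (with $s_{n-1}$ chosen arbitrarily, say $0$, since $d_{n-1}$ spans $\bbk\subset A$ and lies in $\ker\phi$), the lemma identifies $\sum_{l=0}^{n-1}a_{2l+1-k}s_l^2$ as precisely the coefficient of $d_k$ in $s^2\in A$. Combined with the obvious identification of $\sum_k s_k d_k$ as $s$, the displayed formula becomes
\[
r' \;=\; r + s + s^2 \pmod{d_{n-1}\bbk} \;=\; r+\wp(s) \pmod{\bbk}.
\]
The main obstacle is purely organizational: ensuring the index substitutions $l=i+j$ produce exactly the range $l\in\{0,\dots,n-1\}$ compatible with Lemma~\ref{lem:BbasisSquare}, and checking carefully that the cross terms vanish in characteristic $2$. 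Once the explicit coefficient formula is in hand, the identification with $\wp(s)$ is immediate from the earlier algebraic lemma.
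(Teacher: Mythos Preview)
Your proposal is correct and follows essentially the same approach as the paper. The paper evaluates $q_0(gv_i)$ and $q_1(gv_i)$ directly to obtain the same formulas $r'_{2i+1}=r_{2i+1}+s_{2i+1}+\sum_k a_{2k}s_{i+k}^2$ and $r'_{2i}=r_{2i}+s_{2i}+\sum_j a_{2j+1-2i}s_j^2$, then invokes Lemma~\ref{lem:BbasisSquare} exactly as you do; your pullback computation on dual coordinates is just the contragredient phrasing of the same calculation.
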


\begin{proof}
Recall that $g(w_i)=w_i$ and $g(v_i)=v_i+\sum_{k=0}^m s_{i+k}w_{k}$.
We need to compute $q_j(gv_i)$ for $j=0,1$ and $i=0, \ldots, m-1$.
We find
\begin{align*}
q_0(gv_i)&=q_0\left( v_i+\sum_{k=0}^{m} s_{i+k}w_{k} \right)\\
&= 
q_0(v_i)+b_0\left(v_i,\sum_{k=0}^{m} s_{i+k}w_{k}\right)
+q_0\left(\sum_{k=0}^{m} s_{i+k}w_{k}\right)\\
&= r_{2i+1} + s_{2i+1} + \sum_{k=0}^m s_{i+k}^2a_{2k},
\end{align*}
and similarly that
\[
q_1(gv_i) = r_{2i} + s_{2i} + \sum_{j=i}^{m+i} s_j^2a_{2j+1-2i}
\ .
\]
By Lemma~\ref{lem:BbasisSquare}, we conclude that the new invariant
is $r+s+s^2$ as desired.
\end{proof}

\section{Relations to the Arf invariant}
\label{sec:arf}

The following theorem was suggested to us by A.~Efimov.

Recall that any non-degenerate quadratic form in $2m$
variables over a field $\bbk$ of characteristic 2 can be reduced to the form
\[ q = \sum_{i=1}^m a_ix_i^2+x_iy_i+b_iy_i^2 \]
for some basis $x_1,\ldots,x_m,y_1,\ldots,y_m$.
The \emph{Arf invariant} of $q$ is 
\[ \Arf(q) = \sum_{i=1}^ma_ib_i\in \bbk/\wp(\bbk). \]
It is independent of a choice of a canonical form from above.

Let $(q_0,q_1)$ be a regular pair of quadratic forms
in normal form with invariants $\Delta$ and $r$.
We assume that $a_n \ne 0$ and define $A=\bbk[T]/(f(T))$
with $t$ the image of $T$ as in previous sections.
The pair gives rise to a quadratic form $q_A$ on the module
$E_A = E \otimes_\bbk A$ by defining
\[
q_A = q_0+tq_1
\]
as an element $S^2(E^\vee_A)$.
Since $A$ is a sum of fields $A_1,\ldots A_l$,
we may define the Arf invariant of $q_A$ to be
the sum of the Arf invariants of the restrictions $q_{A_i}$.

\begin{thm}
$q_A$ is an orthogonal sum of a $1$-dimensional trivial form
and a non-degenerate $2m$-dimensional form whose Arf invariant
is $r$ modulo $\wp(A)+\bbk$.
\end{thm}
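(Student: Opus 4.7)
The plan is to construct an explicit orthogonal decomposition $E_A = A\omega\oplus E_A'$ in which $q_A|_{A\omega}$ is trivial, then to bring $q_A|_{E_A'}$ into standard form via a symplectic basis, and finally to match the resulting Arf invariant with $r$ by exploiting the recursion that defines the basis $\{d_i\}$ of $A$.

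Let $(w_0,\ldots,w_m,v_0,\ldots,v_{m-1})$ be the Kronecker basis underlying the normal form and set $\omega = \sum_{i=0}^m t^i w_i\in E_A$. Using the Kronecker relations~\eqref{eq:basicEquations}, one computes $b_A(\omega,v_j) = t^{j+1}+t\cdot t^j = 0$, so $\omega\in\rad(b_A)$; and since $W$ is totally singular for both $q_0$ and $q_1$,
\[
q_A(\omega) = \sum_{i=0}^m t^{2i}(a_{2i}+ta_{2i+1}) = \sum_{k=0}^n a_k t^k = f(t) = 0 \text{ in } A.
\]
Let $E_A'$ be the free $A$-submodule spanned by $w_1,\ldots,w_m,v_0,\ldots,v_{m-1}$. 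The identity $w_0 = \omega+\sum_{i=1}^m t^iw_i$ (valid in characteristic~$2$) shows $E_A = A\omega\oplus E_A'$, and the decomposition is orthogonal because $\omega\in\rad(b_A)$. Thus $q_A|_{A\omega}$ is the trivial $1$-dimensional form.

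Next I will verify non-degeneracy of $b_A|_{E_A'}$ and construct a symplectic basis. In the chosen basis of $E_A'$, the Gram matrix has the block form $\bsm 0 & C \\ C^T & 0 \esm$ where $C_{ij} = \delta_{ij}+t\delta_{i+1,j}$ is upper triangular with $1$'s on the diagonal, hence invertible in $\Mat_m(A)$. Inverting $C$ by a geometric series suggests setting
\[
\tilde v_j = \sum_{k=0}^j t^{j-k} v_k,\qquad j=0,\ldots,m-1,
\]
and one directly checks $b_A(w_{i+1},\tilde v_j) = \delta_{ij}$. Since the $w_{i+1}$ and the $\tilde v_j$ each span totally isotropic subspaces of $b_A$, the form $q_A|_{E_A'}$ is already in standard shape $\sum\alpha_i X_i^2+\sum X_iY_i+\sum\beta_j Y_j^2$ in these coordinates, so
\[
\Arf(q_A|_{E_A'}) \equiv \sum_{j=0}^{m-1} q_A(w_{j+1})\,q_A(\tilde v_j)\pmod{\wp(A)}.
\]

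Finally I compare this with $r$. Substituting $q_A(w_{j+1}) = a_{2j+2}+ta_{2j+3}$ and $q_A(\tilde v_j)=\sum_{k=0}^j t^{2(j-k)}(r_{2k+1}+tr_{2k})$ and exchanging the order of summation, the inner sum becomes $\sum_{i=2k+2}^n a_i t^{i-2k-2}$, which equals $d_{2k+1}$ by \eqref{eq:def_d_i}. Thus $\Arf(q_A|_{E_A'}) = \sum_{k=0}^{m-1}(r_{2k+1}+tr_{2k})d_{2k+1}$. Subtracting $r=\sum_{k=0}^{m-1}(r_{2k}d_{2k}+r_{2k+1}d_{2k+1})$ and applying the recursion $d_{2k}=a_{2k+1}+td_{2k+1}$ (so that $td_{2k+1}+d_{2k}=a_{2k+1}$ in characteristic~$2$), the difference telescopes to
\[
\Arf(q_A|_{E_A'}) - r = \sum_{k=0}^{m-1} r_{2k}a_{2k+1}\in \bbk,
\]
which establishes $\Arf(q_A|_{E_A'})\equiv r \pmod{\bbk+\wp(A)}$. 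The main obstacle is this final identification, which depends on recognizing the inner sum as $d_{2k+1}$ and then using the same recursion that drove the proof of Theorem~\ref{thm:ArtinSchreierFormula}; everything else is essentially forced by the shape of the Kronecker basis.
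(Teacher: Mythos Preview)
Your proof is correct and is essentially a mirror image of the paper's argument. The paper also splits off $A\omega$ (there written $w_0'$) and then builds a symplectic basis for the complement, but it telescopes on the \emph{other} side: it sets $w_i' = \sum_{k=i}^m t^{k-i}w_k$ and keeps $v_i' = v_i$, so that $q_A(w_{i+1}') = d_{2i+1}$ pops out immediately and $q_A(v_i') = r_{2i}t + r_{2i+1}$ is the simple factor. You instead keep $w_1,\ldots,w_m$ and telescope the $v$'s to $\tilde v_j$, which makes $q_A(w_{j+1}) = a_{2j+2} + ta_{2j+3}$ the simple factor and pushes the work into computing $q_A(\tilde v_j)$; after exchanging the order of summation you land on the very same expression $\sum_k (r_{2k+1}+tr_{2k})d_{2k+1}$, and the final reduction via $d_{2k} = a_{2k+1} + td_{2k+1}$ is identical to the paper's. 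Neither route is materially shorter; the paper's choice avoids the interchange-of-sums step, while yours makes the complementary submodule $E_A'$ more transparent. (One small inaccuracy in your commentary: the recursion $d_i = td_{i+1}+a_{i+1}$ is recorded immediately after \eqref{eq:def_d_i}, not in the proof of Theorem~\ref{thm:ArtinSchreierFormula}.)
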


\begin{proof}
We select a new basis for $E_A = E \otimes_\bbk A$ as follows:
\[
\begin{array}{rlr}
w_i' &= \sum_{k=i}^m w_i \otimes t^{k-i} & \textrm{ for $i$ in $0,\ldots,m$}\\
v_i' &= v_i \otimes 1 & \textrm{ for $i$ in $0,\ldots,m-1$}
\end{array}
\]
and find that
\[
b_A(w'_i,w'_j)=0\ , \quad
b_A(v'_i,v'_j)=0\ , \quad
b_A(v'_i,w'_j) = \delta_{(i+1)j},
\]
where $b_A$ is the bilinear form associated to $q_A$.
In particular, $w_0'$ is the image of $\Omega$ in $E_A$ and we see that
it spans a $1$-dimensional form where $q_A(w_0')=0$.

The remaining vectors span a subspace which decomposes into an
orthogonal sum of $2$-dimensional subspaces $\langle w'_{i+1}, v'_i
\rangle$ on which $q_A$ is isomorphic to the quadratic form
\[
q_A(w'_{i+1})x^2 + xy + q_A(v'_i)y^2
\]
for appropriate coordinates $x$ and $y$.

Note that
\[
q_A(w'_{i+1})=\sum_{k=i}^m (a_{2i}+a_{2i+1}t) t^{2(k-i)} = d_{2i+1}
\]
while
\[
q_A(v'_i)=r_{2i}t+r_{2i+1}.
\]

The Arf invariant of the $2m$-dimensional subspace is thus
\[
\sum_{i=0}^{m-1} q_A(w'_{i+1}) q_A(v'_i)
=\sum_{i=0}^{m-1} (r_{2i} t + r_{2i+1} ) d_{2i+1}
= \left( \sum_{i=0}^{n-2} r_id_i \right) + \left( \sum_{i=0}^{m-1}
r_{2i}a_{2i+1} \right)
\]
which is equal to $r \mod \bbk$.
\end{proof}

\section{Automorphisms of pencils of quadratic forms}
\label{sec:autos}

\subsection{Arithmetic description of automorphisms of a pair}

Let $A$ be the $\bbk$-algebra from the previous two sections.
Since the characteristic is $2$, the set of idempotents $\Idem(A)$ form
a subgroup of the additive group of $A$.
Recall that an idempotent $a$ in $A$ is an element such that $a^2=a$.
Consequently, the group $\Idem(A)$ can be characterized as the kernel of
the Artin-Schreier map
\[
\wp : A \to A
\]
which takes $a$ to $a^2+a$.

Let $\Aut(q_0,q_1)$ be the set of automorphisms of the pair $(q_0,q_1)$;
in other words, the subgroup of $\GL(E)$ whose elements induce an
isomorphism of the pair with itself.
From Theorem~\ref{thm:ArtinSchreierFormula}, we see that
every automorphism comes from an idempotent via \eqref{eq:gs}.
Conversely, the only non-trivial idempotent which gives rise to a trivial
automorphism is the multiplicative identity $1 \in \bbk \subset A$.
Thus we have the following:

\begin{thm} \label{thm:autIdem}
There is a canonical isomorphism
\[
\Aut(q_0,q_1) \cong \Idem(A)/\langle 1 \rangle
\]
where $\langle 1 \rangle$ is the additive subgroup of $A$ generated by
the unit element $1 \in A$.
\end{thm}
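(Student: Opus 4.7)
The plan is to identify $\Aut(q_0,q_1)$ as a subgroup of $\Aut(b_0,b_1)$ and then translate the defining constraint into pure algebra in $A$. Any automorphism of the pair preserves the associated bilinear forms, so $\Aut(q_0,q_1)\subseteq \Aut(b_0,b_1)$. By Lemma~\ref{lem:autoAlt} together with the discussion of $\phi$ in \eqref{eq:phiDef}, the map $\phi : A \to \Aut(b_0,b_1)$ is surjective with kernel $\bbk$, so $\Aut(b_0,b_1)\cong A/\bbk$. By Theorem~\ref{thm:ArtinSchreierFormula}, an element $g = \phi(s)$ fixes the $r$-invariant precisely when $\wp(s) \in \bbk$, so
\[
\Aut(q_0,q_1) \cong \wp^{-1}(\bbk)/\bbk.
\]

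The heart of the argument is then to show
\[
\wp^{-1}(\bbk) = \Idem(A) + \bbk.
\]
The inclusion $\supseteq$ is immediate since $\Idem(A) = \ker\wp$ and $\wp(\bbk)\subseteq\bbk$. For $\subseteq$, I would reduce to the single key statement $\wp(A)\cap \bbk \subseteq \wp(\bbk)$: given $s\in\wp^{-1}(\bbk)$, this produces $k\in\bbk$ with $\wp(k)=\wp(s)$, and then $\bbF_2$-linearity of $\wp$ in characteristic~$2$ gives $s-k\in\ker\wp = \Idem(A)$, so $s = k + (s-k)\in\bbk+\Idem(A)$.

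The cleanest route to $\wp(A)\cap\bbk \subseteq \wp(\bbk)$, and the main obstacle of the proof, uses the trace map $\tr_{A/\bbk} : A \to \bbk$. Working in $A \otimes_\bbk \bar\bbk \cong \bar\bbk^n$ one verifies $\tr_{A/\bbk}(a^2) = \tr_{A/\bbk}(a)^2$ (since squaring is additive in characteristic~$2$, the identity $(\sum \pi_i(a))^2 = \sum \pi_i(a)^2$ holds over $\bar\bbk$), so $\tr_{A/\bbk}$ commutes with $\wp$. For $c\in\bbk$ one has $\tr_{A/\bbk}(c) = n\cdot c$, and since $n=2m+1$ is odd this equals $c$ in characteristic~$2$. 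Consequently, for any $s\in A$ with $\wp(s)\in\bbk$,
\[
\wp(s) = \tr_{A/\bbk}(\wp(s)) = \wp(\tr_{A/\bbk}(s)) \in \wp(\bbk).
\]
The oddness of $n$ enters precisely here, and it must, since the analogous statement for even $n$ would fail (one can construct $s \in A$ with $\wp(s) \in \bbk \setminus \wp(\bbk)$ when $2 \mid n$).

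Finally, combining these isomorphisms gives
\[
\Aut(q_0,q_1) \cong (\Idem(A)+\bbk)/\bbk \cong \Idem(A)/(\Idem(A)\cap \bbk) = \Idem(A)/\langle 1\rangle,
\]
where the last equality uses that the only idempotents in the diagonal subalgebra $\bbk \subset A$ are $0$ and $1$. Each step in the chain (the map $\phi$, the identification of $r$-invariants, the trace) is manifestly canonical, so the resulting isomorphism is as well.
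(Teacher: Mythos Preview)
Your proof is correct and follows the same overall route as the paper: reduce via $\phi$ and Theorem~\ref{thm:ArtinSchreierFormula} to the question of which $s\in A$ satisfy $\wp(s)\in\bbk$, and identify this set modulo $\bbk$ with $\Idem(A)/\langle 1\rangle$.

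The difference is that the paper simply asserts ``every automorphism comes from an idempotent'' as an immediate consequence of Theorem~\ref{thm:ArtinSchreierFormula}, without justifying the implication $\wp(s)\in\bbk \Rightarrow s\in\Idem(A)+\bbk$. You actually prove this, and your trace argument is the right tool: the identity $\tr_{A/\bbk}\circ\,\wp = \wp\circ\tr_{A/\bbk}$ combined with $\tr_{A/\bbk}|_\bbk = n\cdot\id = \id$ (using that $n=2m+1$ is odd) gives $\wp(A)\cap\bbk\subseteq\wp(\bbk)$, and then additivity of $\wp$ in characteristic~$2$ finishes it. Your remark that this step genuinely fails for even $n$ (e.g.\ $\bbk=\bbF_2$, $A=\bbF_4$, $s$ a primitive element) is a good sanity check and shows the oddness of $n$ is not incidental. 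So your write-up is more complete than the paper's on exactly the point that deserves attention.
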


The group $\Idem(A)$ is generated by the orthogonal set of idempotents
$\epsilon_1, \ldots, \epsilon_l$ from \eqref{eq:epsilon}.
Since the additive subgroup of $A$ is commutative and every element has
order $2$,
we see that $\Idem(A)$ is an elementary abelian $2$-group of order
$2^l$.

Consequently, $\Aut(q_0,q_1)$ is an elementary abelian $2$-group of
order $2^{l-1}$ where $l$ is the number of irreducible factors in
$f(T)$.
When $A$ is a field (equivalent to $f(T)$ being irreducible),
then the only idempotent is $1$ and thus $\Aut(q_0,q_1)$ is trivial.
At the other extreme, when $f(T)$ splits completely,
$\Aut(q_0,q_1)$ has order $2^{2m}$ and we will see below that
the idempotents $\epsilon_1, \ldots, \epsilon_n$ correspond to
reflections.

$\Idem(A)$, respectively $\Aut(q_0,q_1)$, can be viewed as the set
of $\bbk$-points of a finite \'etale group scheme
$\underline{\Idem}(A)$, respectively $\underline{\Aut}(q_0,q_1)$,
over $\bbk$.
The group scheme $\underline{\Idem}(A)$ is simply the Weil restriction of
scalars $\mathbb{R}_{A/\bbk}( (\bbZ/2\bbZ)_A )$ where
$\bbZ/2\bbZ$ is the constant group scheme of order $2$.
The group scheme
$\underline{\Aut}(q_0,q_1)$
is the quotient in the exact sequence
\begin{equation} \label{eq:IdemAutSES}
\xymatrix{
0 \ar[r] & \bbZ/2\bbZ \ar[r]^-{\iota} &
\mathbb{R}_{A/\bbk}( (\bbZ/2\bbZ)_A ) \ar[r] &
\underline{\Aut}(q_0,q_1) \ar[r] & 0
}
\end{equation}
where $\iota$ is the natural monomorphism.

\begin{prop} \label{prop:GaloisCohomlogy}
There is a canonical isomorphism between the Galois cohomology group
$H^1(\bbk,\underline{\Aut}(q_0,q_1) )$
and the group $A/(\wp(A) + \bbk)$. 
\end{prop}

\begin{proof}
This follows the same reasoning as \S{}1~of~\cite{Skor}.
The composition
\[
H^2(\bbk,\bbZ/2\bbZ) \to
H^2(\bbk,\mathbb{R}_{A/\bbk}( (\bbZ/2\bbZ)_A )) =
H^2(A,\bbZ/2\bbZ)
\]
factors through the restriction-corestriction map,
which is multiplication by the odd number $\dim_\bbk(A)$.
Since $\bbZ/2\bbZ$ and $\mathbb{R}_{A/\bbk}( (\bbZ/2\bbZ)_A )$
have an even number of elements over a separable closure,
the cohomology groups are $2$-primary.
Thus the map $H^2(\bbk,\bbZ/2\bbZ) \to H^2(\bbk,\mathbb{R}_{A/\bbk}(
(\bbZ/2\bbZ)_A ))$ is injective.
Thus from \eqref{eq:IdemAutSES},
the group $H^1(\bbk,\underline{\Aut}(q_0,q_1)$ is the cokernel of the map
\[
H^1(\bbk,\bbZ/2\bbZ) \to H^1(\bbk,\mathbb{R}_{A/\bbk}( (\bbZ/2\bbZ)_A)) \ .
\]
The characteristic is $2$, thus $H^1(\bbk,\bbZ/2\bbZ) \cong \bbk/\wp(\bbk)$ and
\[
H^1(\bbk,\mathbb{R}_{A/\bbk}( (\bbZ/2\bbZ)_A)) = H^1(A,(\bbZ/2\bbZ)_A))
\cong A/\wp(A) \ ,
\]
so the result follows.
\end{proof}

\begin{remark}
The above proposition could be used to recover a weak form of
Theorem~\ref{thm:ArtinSchreierFormula}.
Of course, this argument would be circular in our presentation
since Theorem~\ref{thm:ArtinSchreierFormula} was used
to determine the automorphism group.
\end{remark}

\subsection{Geometric description of automorphisms of a pair}
\label{sec:geomAutos}

In this subsection we assume that $\bbk$ is algebraically closed.

Let $x_1, \ldots, x_n$ be vectors in $U$ representing the
points $\overline{x_1}, \ldots, \overline{x_n}$
in the zero locus of $V(\Delta)$ on $|U|$.
Recall that $V(\sfq_{x_1}), \ldots, V(\sfq_{x_n})$
are precisely the singular quadrics of the pencil $|\sfq|$.

Fix $i$ in $1, \ldots, n$.
Since $V(\sfq_{x_i})$ is of corank $1$, it contains a unique
singular point $\overline{z_i} \in |E|$ represented by the vector
$z_i = \Omega(x_i)$ in $E$.

For any $u \in U$ such that $|u| \ne |x_i|$ in $|U|$ consider the
following linear automorphism of $E$
\begin{equation} \label{eq:def_rho_i}
\rho_i(v) = v + \frac{\sfb_u(z_i,v)}{\sfq_u(z_i)}z_i
\end{equation}
where $v \in E$.
A priori, the formula for $\rho_i$ depends on the choice of $u \in U$.
However, one can check that $\rho_i$ does not depend on the choice of
$u$ as long as it is not a multiple of $x_i$
(use that $\sfq_{x_i}(z_i)=0$ and $\sfb_{x_i}(z_i,v)=0$
for any choice of $v \in E$).

\begin{thm}
The automorphism group $\Aut(q_0,q_1)$ is an elementary
abelian $2$-group of order $2^{2m}$
generated by the reflections
$\rho_1, \ldots, \rho_n$
subject to the relation
$\rho_1 \cdots \rho_n=1$.
\end{thm}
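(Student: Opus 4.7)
The plan is to identify each reflection $\rho_i$ with a primitive idempotent of $A = \bbk[T]/(f(T))$ under the isomorphism $\Aut(q_0,q_1) \cong \Idem(A)/\langle 1\rangle$ of Theorem~\ref{thm:autIdem}. Since $\bbk$ is algebraically closed, $f(T) = a_n \prod_{i=1}^n(T-\alpha_i)$ has $n$ distinct roots, so $A \cong \bbk^n$ and $\Idem(A) \cong (\bbZ/2\bbZ)^n$ is freely generated by the $n$ primitive idempotents $\epsilon_1,\ldots,\epsilon_n$, with $\epsilon_1+\cdots+\epsilon_n = 1_A$. Quotienting by $\langle 1\rangle$ yields an elementary abelian $2$-group of order $2^{n-1}=2^{2m}$ generated by the classes of $\epsilon_1,\ldots,\epsilon_n$ with the single relation that their product is trivial — which is precisely the presentation claimed for $\rho_1,\ldots,\rho_n$.

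First I would verify that each $\rho_i$ actually lies in $\Aut(q_0,q_1)$. Both $u \mapsto \sfb_u(z_i,w)$ and $u \mapsto \sfq_u(z_i)$ are linear forms on $U$ vanishing at $u = x_i$ (since $z_i \in \rad(\sfb_{x_i})$ and $\sfq_{x_i}(z_i) = \Delta(x_i) = 0$), hence are proportional to a common linear form $\ell$ vanishing at $x_i$: write $\sfb_u(z_i,w) = \mu(w)\ell(u)$ and $\sfq_u(z_i) = \nu\ell(u)$. Independence of $u$ in \eqref{eq:def_rho_i} follows since the ratio is $\mu(w)/\nu$, and expanding $\sfq_v(\rho_i(w))$ leaves the extra terms $(\mu(w)^2/\nu + \mu(w)^2/\nu)\ell(v) = 0$ in characteristic $2$. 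Moreover $z_i \notin X$ — otherwise $z_i$ would be a singular point of $X$ as in the proof of Corollary~\ref{cor:corank} — so $\sfq_u(z_i)\ne 0$ for generic $u$, and the formula is well defined.

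Next I would read off the coefficients of $\rho_i$ in a Kronecker basis, assuming $a_n \ne 0$ (harmless by Remark~\ref{rem:technicalNuisance}). Taking $x_i = (1,\alpha_i)$, formula \eqref{eq:OmegaDef} gives $z_i = \sum_{k=0}^m \alpha_i^k w_k$. Choosing $u = u_1$ and using $b_1(w_k,v_j) = \delta_{kj}$ together with $q_1(z_i) = \sum_k a_{2k+1}\alpha_i^{2k} = f'(\alpha_i)$ (observing that $f'(T) = \sum_k a_{2k+1}T^{2k}$ in characteristic $2$), one computes
\[
\rho_i(v_j) = v_j + \sum_{k=0}^m \frac{\alpha_i^{j+k}}{f'(\alpha_i)}\,w_k.
\]
In the notation of Lemma~\ref{lem:autoAlt} this is the automorphism with $s_l = \alpha_i^l/f'(\alpha_i)$ for $l = 0,\ldots,n-2$, so by \eqref{eq:sInvDef} the element of $A$ attached to $\rho_i$ via $\phi$ is $s_{(i)} = \frac{1}{f'(\alpha_i)} \sum_{l=0}^{n-2}\alpha_i^l\, d_l$.

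The final step — and the main technical obstacle — is showing $s_{(i)} \equiv \epsilon_i \pmod{\bbk\cdot 1_A}$. Setting $X = \alpha_i$ in \eqref{eq:GenFunc_def_u} and evaluating at the coordinate $\alpha_j$ of $A\cong\bbk^n$ gives $\sum_{l=0}^{n-1}\alpha_i^l\,d_l(\alpha_j) = \lim_{X\to\alpha_i} f(X)/(X-\alpha_j)$, equal to $0$ for $i\ne j$ and to $f'(\alpha_i)$ for $i = j$. Thus $\frac{1}{f'(\alpha_i)}\sum_{l=0}^{n-1}\alpha_i^l\, d_l = \epsilon_i$, and dropping the $l = n-1$ term removes only $\tfrac{a_n \alpha_i^{n-1}}{f'(\alpha_i)}\cdot 1_A \in \bbk\cdot 1_A$. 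Combined with Theorem~\ref{thm:autIdem}, this gives the bijection $\rho_i \leftrightarrow \epsilon_i$, and the theorem follows from the structure of $\Idem(A)/\langle 1\rangle$ described above. The bridge for this last identification is the generating-function identity \eqref{eq:GenFunc_def_u} together with the characteristic-$2$ formula $f'(T) = \sum_k a_{2k+1}T^{2k}$.
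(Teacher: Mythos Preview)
Your proof is correct and follows essentially the same route as the paper: both identify $\rho_i$ with the primitive idempotent $\epsilon_i$ under the isomorphism of Theorem~\ref{thm:autIdem} by computing the catalecticant parameters $s_l = \alpha_i^l/f'(\alpha_i)$ of $\rho_i$ in a Kronecker basis. The only difference is in the final identification: the paper reads off the $d$-coordinates of $\epsilon_i$ via the trace dual-basis formula of Proposition~\ref{prop:dualBasis}, whereas you evaluate the generating-function identity \eqref{eq:GenFunc_def_u} at $X=\alpha_i$ coordinatewise and discard the $d_{n-1}$ term; these are equivalent computations, and your version has the small bonus of making the congruence $s_{(i)}\equiv\epsilon_i\pmod{\bbk}$ (rather than an exact equality via $\phi$) completely explicit.
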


\begin{proof}
It is clear that the reflections $\{\rho_i\}$ are automorphisms, that
they are of order $2$, and that they commute.
It remains to show that there are no other automorphisms and to show
that they satisfy the desired relations and no others.
We will show that the reflections $\rho_1, \ldots, \rho_n$ correspond to
a set of orthogonal idempotents of $A$.
The result then follows by Theorem~\ref{thm:autIdem}.

The remainder of the proof is simply a direct calculation.
In view of Remark~\ref{rem:technicalNuisance},
we may assume without loss of generality that $q_0$ is nondegenerate
and each vector $x_i$ has coordinates $(\alpha_i,1)$ where
$\alpha_i$ is a root of $f(T)$.
Let $\epsilon_i$ be the idempotent corresponding to $\alpha_i$;
in other words, multiplication by $\epsilon_i$ gives rise to a map
$A \to \bbk$ such that $t \mapsto \alpha_i$ as in \eqref{eq:idemAction}.

We will establish that $\phi(\epsilon_i)=\rho_i$
where $\phi : A \to \GL(E)$ is the map \eqref{eq:phiDef}.
To do this, we determine the values of $s_0, \ldots, s_{n-2}$
in \eqref{eq:gs}.
From \ref{eq:sInvDef},
these are the first $n-1$ coordinates of $\epsilon_i$
in the basis $d_0,\ldots,d_{n-1}$ of $A$ defined in
\eqref{eq:def_d_i}.
Thus, by Proposition~\ref{prop:dualBasis}, we have
\[
s_j = \tr_{A/\bbk}\left(\epsilon_i \frac{t^i}{f'(t)} \right)
= \frac{\alpha_i^j}{f'(\alpha_i)}
\]
for each $j$ in $0, \ldots, n-2$.

Now, note that $z_i = \sum_{i=0}^m \alpha^i w_i$.
Since $f'(T)=\sum_{k=0}^m a_{2i+1} T^{2i}$,
we have $f'(\alpha_i) = q_1(z_i)$.
We have $b_1(v_j,z_i)=\alpha_i^j$ for $j$ in $0, \ldots, m-1$
where $w_0,\ldots,w_m,v_0,\ldots,v_{m-1}$ is the Kronecker basis
for $E$.

Putting these observations together,
we see that
\[
\phi(\epsilon_i)(v_j) - v_j = \sum_{k=0}^m s_{j+k}w_k
= \frac{\alpha_i^j}{f'(\alpha_i)} \sum_{k=0}^m \alpha^k w_k
= \frac{b_1(z_i,v_j)}{q_1(z_i)}z_i
\]
for all $j$ in $0, \ldots, m-1$.
Since, additionally,
$\phi(\epsilon_i)(w_j)=\rho(w_j)=w_j$ for $j$ in $0, \ldots, m$,
we conclude that that $\phi(\epsilon_i)(v)=\rho(v)$
for all $v \in E$.
\end{proof}

With this description of the automorphism group, we have the following:

\begin{corollary} \label{cor:generators}
There are exactly $2^{2m}$ generators permuted simply transitively by
the group $\Aut(q_0,q_1)$.
\end{corollary}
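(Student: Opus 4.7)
The plan is to show that $\Aut(q_0,q_1)$ acts simply transitively on the set of generators of $X$; the count $2^{2m}$ then follows immediately from the preceding theorem. Clearly $\Aut(q_0,q_1) \subset \GL(E)$ preserves $X = \Bs(\sfq)$, and since it acts linearly, it permutes the $(m-1)$-dimensional linear subspaces of $X$, i.e., the generators. So there are two things to prove: transitivity and freeness.

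For transitivity, let $\Lambda_1$ and $\Lambda_2$ be two generators. By Corollary~\ref{cor:generatorIsL}, each gives a Kronecker normal form with decomposition $E = W \oplus L_i$ and $|L_i| = \Lambda_i$, where $W$ is the canonical radical subspace of the pencil. The crucial observation is that since $\Lambda_i \subset X$, both $q_0$ and $q_1$ vanish identically on $L_i$, so in the corresponding normal form \eqref{eq:main} the coefficients $r_0,\ldots,r_{n-2}$ all vanish. Writing $\calB_i = (w_0,\ldots,w_m,v_0^{(i)},\ldots,v_{m-1}^{(i)})$ for the two Kronecker bases (with the same canonical $w_j$'s), the pair $(q_0,q_1)$ takes the same polynomial expression in the coordinates dual to either $\calB_1$ or $\calB_2$, namely the normal form with $r=0$ and the prescribed $a_j$'s. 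Therefore the linear map $g : E \to E$ sending $\calB_1$ to $\calB_2$ satisfies $q_0 \circ g = q_0$ and $q_1 \circ g = q_1$, so $g \in \Aut(q_0,q_1)$, and by construction $g(\Lambda_1) = \Lambda_2$.

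For freeness, suppose $g \in \Aut(q_0,q_1)$ stabilizes a generator $\Lambda$ setwise. Choose a Kronecker basis adapted to $\Lambda$ as above, with $|L| = \Lambda$. By Lemma~\ref{lem:autoAlt}, every automorphism of the associated pair of bilinear forms (and in particular $g$) has the form $g(w_i) = w_i$ and $g(v_i) = v_i + \sum_{k=0}^{m} s_{i+k} w_k$. The condition $g(L) = L$ forces $g(v_i) - v_i \in L \cap W = 0$, whence all $s_j$ vanish and $g$ is the identity.

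Combining these, the action is simply transitive, so the set of generators has cardinality $|\Aut(q_0,q_1)| = 2^{2m}$. The main subtlety is the transitivity step, where one must recognize that being a generator (as opposed to merely a complement to $W$) is precisely what forces the $r$-invariant to vanish in the adapted Kronecker basis, so that two such bases produce literally the same polynomial expressions rather than merely isomorphic ones.
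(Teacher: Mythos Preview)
Your proof is correct and follows the same approach as the paper: identify generators with the subspaces $L$ appearing in a Kronecker normal form that are totally isotropic for every form in the pencil, then use the explicit description of $\Aut(b_0,b_1)$ from Lemma~\ref{lem:autoAlt} to check simple transitivity. Your argument is in fact more explicit than the paper's on the transitivity step---the paper asserts simple transitivity after only verifying freeness, whereas you spell out that two generators yield Kronecker bases (with the same canonical $w_j$'s) in which $(q_0,q_1)$ has literally the same polynomial expression (the $r$-invariant vanishes for both), so the change-of-basis map lies in $\Aut(q_0,q_1)$.
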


\begin{proof}
By Corollary~\ref{cor:generatorIsL}, the generators are in bijective
correspondence with subspaces $L$ occurring in a Kronecker normal form
such that $L$ is totally isotropic with respect to every quadratic form
in the pencil.  The group $\Aut(q_0,q_1)$ permutes such spaces.
The subspace $L$ is never invariant under a non-trivial
automorphism as in \eqref{eq:gs}, thus the action is simply transitive.
The number of generators is $2^{2m}$ since that is the order of the
group.
\end{proof}

From the above corollary, we prove Theorem~\ref{thm:GaloisCohomology}
from the introduction:

\begin{proof}[Proof of Theorem~\ref{thm:GaloisCohomology}]
The set of generators of $X$ corresponds to a smooth $0$-dimensional
subscheme of an appropriate Grassmanian.
In view of Corollary~\ref{cor:generators},
over a non-closed field this is an 
$\underline{\Aut}(q_0,q_1)$-torsor.
From Proposition~\ref{prop:GaloisCohomlogy}
we have that
$A/(\wp(A) + \bbk) \cong H^1(\bbk,\underline{\Aut}(q_0,q_1) )$.
Thus the $r$-invariant in $A$ naturally gives rise to a class in
$H^1(\bbk,\underline{\Aut}(q_0,q_1)$.
\end{proof}

\subsection{Automorphisms of the base locus}

The goal of this section is to prove Theorem~\ref{thm:autos}
which describes the automorphisms of the base scheme 
$\Bs(\sfq) = V(q_0,q_1)$ in $\bbP^{n-1}$. 

\begin{lem} \label{lem:Fano}
Two smooth complete intersections of pairs of quadrics in $\bbP^{2m}$ are
isomorphic if and only if there is an element of $\PGL_{2m+1}$
inducing the isomorphism.
\end{lem}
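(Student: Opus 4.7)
The ``if'' direction is immediate. For the ``only if'' direction, let $\phi \colon X \to X'$ be any abstract isomorphism of smooth intersections of pairs of quadrics in $\bbP^{2m}$. The overall strategy is to show that the line bundle $\calO_X(1)$ is intrinsic to the abstract variety $X$, and that the given embedding in $\bbP^{2m}$ is the one associated to the complete linear system $|\calO_X(1)|$; these two facts together will force $\phi$ to be induced by a projective linear transformation.

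First, I will show that $\calO_X(1) \in \Pic(X)$ depends only on the abstract variety $X$. For surfaces ($m = 2$), the adjunction formula gives $\omega_X \cong \calO_X(3 - 2m) = \calO_X(-1)$, so $\calO_X(1) \cong \omega_X^{-1}$ is manifestly intrinsic. For $m \geq 3$, we have $\dim X = 2m-2 \geq 4$, and the Grothendieck--Lefschetz theorem on Picard groups (SGA~2, Expos\'e~XII) applies to the smooth complete intersection $X$ in $\bbP^{2m}$, yielding $\Pic(X) \cong \bbZ$ generated by $\calO_X(1)$; this is then the unique ample generator, again intrinsic. Next, I will show that the embedding $X \hookrightarrow \bbP^{2m}$ is given by the complete linear system $|\calO_X(1)|$. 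To do this, I will twist the Koszul resolution
\[
0 \to \calO_{\bbP^{2m}}(-4) \to \calO_{\bbP^{2m}}(-2)^{\oplus 2} \to \calI_X \to 0
\]
by $\calO(1)$ and apply the standard vanishing of $H^i(\bbP^{2m}, \calO(d))$ for $0 < i < 2m$ (this uses $m \geq 2$) to deduce that $H^0(\calI_X(1)) = H^1(\calI_X(1)) = 0$. Consequently, the restriction map $H^0(\bbP^{2m}, \calO(1)) \to H^0(X, \calO_X(1))$ is an isomorphism of $(2m+1)$-dimensional $\bbk$-vector spaces.

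To conclude, since $\phi$ is an isomorphism of abstract varieties, the previous step gives $\phi^* \calO_{X'}(1) \cong \calO_X(1)$, and pullback on sections yields a $\bbk$-linear isomorphism $H^0(X', \calO_{X'}(1)) \to H^0(X, \calO_X(1))$. Composing with the two restriction isomorphisms produced above identifies $H^0(\bbP^{2m}, \calO(1))$ with itself via an element of $\GL_{2m+1}(\bbk)$, uniquely determined up to a scalar, whose associated element of $\PGL_{2m+1}(\bbk)$ restricts to $\phi$ on $X$. The main obstacle will be the intrinsic description of $\calO_X(1)$ in higher dimensions, where the argument depends on the Grothendieck--Lefschetz theorem; fortunately this is valid in arbitrary characteristic for smooth complete intersections of dimension $\geq 3$, so no additional hypothesis on $\bbk$ is required.
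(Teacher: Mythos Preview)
Your argument is correct for $m \ge 2$, but you have omitted the case $m=1$, which the paper does treat: there $X$ is a reduced $0$-dimensional subscheme of degree $4$ in $\bbP^2$ (four points, no three collinear), and the claim is elementary. You should add a sentence covering this, since for $m=1$ your Koszul computation fails ($H^1(\calI_X(1))\cong\bbk$ in $\bbP^2$) and $\calO_X(1)$ is not intrinsically determined.

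Apart from this omission, your proof is more detailed than the paper's and takes a somewhat different route. The paper argues uniformly for all $m\ge 2$ via adjunction: $\omega_X\cong\calO_X(3-2m)$, so the anticanonical class is a positive multiple of the hyperplane class, and ``the embedding in $\bbP^{2m}$ comes from a multiple of the anticanonical sheaf.'' This is brief to the point of being informal when $2m-3>1$: to pass from $\omega_X^{-1}\cong\calO_X(2m-3)$ back to $\calO_X(1)$ one implicitly needs that $\Pic(X)$ is torsion-free (so that the ample $(2m-3)$rd root is unique). Your use of the Grothendieck--Lefschetz theorem for $m\ge 3$ makes this step rigorous and transparent, at the cost of invoking a deeper result. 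Your explicit Koszul verification that $H^0(\bbP^{2m},\calO(1))\to H^0(X,\calO_X(1))$ is an isomorphism is also a point the paper leaves implicit. In short: the paper's proof is more elementary in spirit but leaves gaps that your heavier machinery fills cleanly.
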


\begin{proof}
If $m=1$ then the two varieties are smooth subschemes of $\bbP^2$,
of dimension $0$ of degree $4$; the result is clear in this case.
Otherwise, when $m \ge 2$,
the adjunction formula gives that the canonical sheaf $\omega_X$ is
isomorphic to $\calO_X(-n+4)$.
Since $n =2m+1 \ge 5$, the anticanonical divisor 
class $-K_X$ is a positive multiple of the hyperplane section.
Hence the embedding of $X$ in
$\bbP^{2m}$ comes from a multiple of the anticanonical sheaf.
Any isomorphism between the varieties induces isomorphisms of the
corresponding anticanonical sheaves, and the result follows.
\end{proof}

Theorem~\ref{thm:autos} follows immediately from the following.

\begin{thm} \label{thm:autosPlus}
Assume $X$ is quasi-split. 
The automorphism group $\Aut(X)$ of $X$ is isomorphic to
$\Aut(q_0,q_1) \rtimes G$
where $G$ is the subgroup of $\PGL(U)$ which leaves
invariant the scheme $V(\Delta)$ of zeros of the half-discriminant.
\end{thm}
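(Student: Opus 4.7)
The plan is to build a short exact sequence
\[ 1 \to \Aut(q_0,q_1) \to \Aut(X) \xrightarrow{\rho} G \to 1 \]
and then split it using the action on the finite set of generators of $X$. By Lemma~\ref{lem:Fano}, $\Aut(X)$ embeds in $\PGL(E)$, and every such element preserves the homogeneous ideal of $X$. A Koszul resolution computation shows that the degree-two part of this ideal equals $\sfq(U) \subset S^2 E^\vee$, so each automorphism of $X$ permutes the quadrics in the pencil and induces an automorphism of $|U|$ which, because singular quadrics correspond to $V(\Delta)$, preserves the scheme $V(\Delta)$; this is the homomorphism $\rho$.

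To identify the kernel, suppose $[h]\in\ker\rho$ lifts to $h\in\GL(E)$ with $h^*\sfq_u = \lambda(u)\sfq_u$ for every $u\in U$; linearity of $h^*$ on $\sfq(U)$ together with triviality of the action on $|U|$ forces $\lambda(u)$ to be a constant $\lambda$. Rescaling $h$ by a square root of $\lambda$, which exists over the algebraically closed base field, places $h$ in $\Aut(q_0,q_1)$; conversely, the nontrivial elements of $\Aut(q_0,q_1)$ are reflections and hence not scalar, so the identification is faithful. For surjectivity of $\rho$, given $g\in G$ I lift to $\tilde g\in\GL(U)$ and rescale so that $\tilde g^*\Delta = \Delta$, which is possible because $\tilde g^*\Delta$ and $\Delta$ cut out the same divisor on $|U|$. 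Then $(\sfq_{\tilde g u_0},\sfq_{\tilde g u_1})$ has the same half-discriminant as $(q_0,q_1)$, and since $X$ is quasi-split, Theorem~\ref{thm:geometricRis0} puts both pairs into a normal form with $r_i=0$. The two pairs are therefore isomorphic, and any $h\in\GL(E)$ realizing the isomorphism satisfies $\rho([h])=g$.

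To split $\rho$, I invoke Corollary~\ref{cor:generators}: $\Aut(q_0,q_1)$ acts simply transitively on the $2^{2m}$ generators of $X$. Fix a generator $\Lambda$ and set $H=\Stab_{\Aut(X)}(\Lambda)$. Then $H\cap \Aut(q_0,q_1)$ is trivial, so $\rho|_H$ is injective. Given $g\in G$, pick any lift $h'$ from the preceding step; by simple transitivity there is a unique $a\in \Aut(q_0,q_1)$ with $ah'(\Lambda)=\Lambda$, and I define $\sigma(g):=ah'\in H$. Uniqueness of this adjustment forces $\sigma(g_1g_2)=\sigma(g_1)\sigma(g_2)$, so $\sigma:G\to H\subset \Aut(X)$ is a homomorphism splitting $\rho$, which yields the semidirect product decomposition. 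The main obstacle is the surjectivity step, where one must promote an abstract automorphism of $|U|$ to a linear automorphism of $E$ preserving the pencil; this is precisely where the strong geometric normal form of Theorem~\ref{thm:geometricRis0}, available thanks to the quasi-split hypothesis, does the essential work.
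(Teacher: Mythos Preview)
Your proof is correct and shares the paper's exact-sequence skeleton, but two of the three steps are handled differently. For the map $\rho$, the paper routes through the canonical subspace $|W|$ via the identification $W\cong (S^mU^\vee)^\vee$ of Proposition~\ref{prop:coordFree}; your argument through the degree-$2$ part of the homogeneous ideal is more direct. For surjectivity, the paper invokes Lemma~\ref{lem:changePair} to manufacture an explicit lift $h\in\GL(W)\times\GL(L)$ and then checks that the two normal forms coincide; your comparison of $r=0$ normal forms is the same idea in different packaging (though you should cite Theorem~\ref{thm:qsGivesr0} rather than Theorem~\ref{thm:geometricRis0}, which is the algebraically-closed special case). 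The genuine divergence is the splitting: the paper leaves it implicit, relying on the fact that its lift already lies in $\GL(W)\times\GL(L)$ and hence stabilizes the generator $|L|$, whereas you make the mechanism explicit via the stabilizer of a generator and the simple transitivity of Corollary~\ref{cor:generators}. This is cleaner, but note that Corollary~\ref{cor:generators} is proved under the standing hypothesis that $\bbk$ is algebraically closed, and your square-root step invokes the same hypothesis; since Theorem~\ref{thm:autosPlus} is stated for arbitrary $\bbk$ with $X$ merely quasi-split, you should remark that $\lambda$ is automatically a square (compare half-discriminants and use that $n$ is odd) and that simple transitivity on $\bbk$-rational generators follows from Corollary~\ref{cor:generatorIsL} together with Theorem~\ref{thm:ArtinSchreierFormula}.
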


\begin{proof}
By Lemma~\ref{lem:Fano}, any automorphism of $X$ can be represented
by an element of $\PGL(E)$.

Recall that $|W|$ is a canonical subspace and must be invariant under any
automorphism of $X$.
From Proposition~\ref{prop:coordFree},
$W$ is canonically isomorphic to $S^m(U^\vee)^\vee$.
Thus an automorphism $h \in \PGL(E)$ fixes $|W|$ pointwise
if and only if it fixes $|U|$ pointwise.
Thus there is a group homomorphism $\pi : \Aut(X) \to \PGL(U)$ whose kernel is
precisely the group $\Aut(q_1,q_2)$ of automorphisms of the pair $(q_1,q_2)$.

Note that since the zeroes of $\Delta$ correspond to the singular quadrics 
of the pencil, all automorphisms must leave invariant the zeroes of
$\Delta$.  Thus the image of $\pi$ is contained in $G$.  It remains to
show that any element $g \in \GL(U)$ representing an element of $G$
can be lifted to $\Aut(X)$.

Since $X$ is quasi-split, we have a normal form for the pair
$(q_{u_1},q_{u_2})$ where $r=0$.
We may scale our representative $g \in \GL(U)$ such that
$g^*\Delta=\Delta$.
By Lemma~\ref{lem:changePair}, there is an $h \in \GL(W) \times \GL(L)$
so that $(\sfq_{gu_0},\sfq_{gu_1})$ has a normal form
with respect to the basis $h(v_1), \ldots, h(w_{m})$.
Since $g^*\Delta=\Delta$,
we have $\sfq_{g(u_i)}(w_j)=\sfq_{u_i}(w_j)$
for $i=0,1$ and $j=0, \ldots, m$.
Since $L$ is totally isotropic,
$\sfq_{g(u_i)}(v_j)=\sfq_{u_i}(v_j)=0$
for $i=0,1$ and $j=0, \ldots, m-1$.
Thus the normal forms are equal and $g$ is an automorphism as desired.
\end{proof}

\begin{remark}
Note in the last step of the proof, we required that $r=0$
in order to lift
automorphisms from $g \in \PGL(U)$ to $\Aut(X)$.
Thus, when $X$ is not quasi-split, the automorphism group may
not surject onto $G$.
\end{remark}

\section{Cohomology of intersection of two quadrics in
\texorpdfstring{$\bbP^{2m}$}{P2m}}
\label{sec:cohomology}

Let $X$ be a smooth intersection of two quadrics in $\bbP^{2m}$ over an
algebraically closed field of characteristic $p\ge 0$. Recall that in
the case $m = 2$ when $X$ is a del Pezzo surface of degree 4, the Picard
group $\Pic(X)$ of algebraic 2-cycles on $X$ is a free abelian group of
rank 6 and the cycle homomorphism $\Pic(X)_{\bbZ_\ell}\to H^2(X,\bbZ_\ell)$
to the $\ell$-adic cohomology is an isomorphism. It also compatible with
the intersection  pairing $\Pic(X)\times \Pic(X)\to \bbZ$ and the
cup-product $H^2(X,\bbZ_\ell)\times H^2(X,\bbZ_\ell)\to \bbZ_\ell$ on
the cohomology. This well-known result follows from the fact that $X$ is
isomorphic to the blow-up of 5 points $p_1,\ldots,p_5$ in the projective
plane $\bbP^2$ no three of which are collinear. In the anti-canonical
model, the exceptional curves of the blow-up are 5 disjoint lines on
$X$. The remaining eleven lines come from the proper transforms of lines
$\overline{p_i,p_j}$ and the conic passing through the five points.

Let $e_1,\ldots,e_5$ be the classes of the exceptional curves, and $e_0$
be the class of the pre-image of a line in the plane under the blow-up.
Then $\Pic(X)$ is freely generated  by $e_0,\ldots,e_5$ and the
canonical class of $X$ is equal to $-3e_0+e_1+\cdots+e_5$. Its
orthogonal complement in $\Pic(X)$ is isomorphic to the negative root
lattice of type $D_5$.
The group of automorphisms of $X$ is faithfully
represented in the Weyl group of this lattice
and is a subgroup to $2^4\rtimes \frakS_5$,
where $\frakS_5$ denotes the symmetric group on $5$ letters.

In this section we extend these well known facts to the case of
arbitrary $m$
(for $\cha(\Bbbk) \ne 2$ see \cite{Reid}).
Throughout, $X$ will be the base locus of a regular pencil $\sfq$ of
quadrics and $\sfP$ will denote the projectivized radical subspace
$|W|$ of $|E|$.

Fix a generator $\Lambda = |L|$ in $X$.
Let $X_\Lambda\to X$ be the blow-up of $X$ with center at $\Lambda$.
Consider the projection map $p_\Lambda:X_\Lambda\to \bbP^{m}$ from
$\Lambda$.
We have $E = W \oplus L$ so we may take $p_\Lambda$ to be the
projectivization of the projection $E\to W$ and thus identify
$\bbP^{m}$ with $\sfP = |W|$.
For any point $x\in X\setminus \Lambda$,
$p_\Lambda(x) = \la \Lambda,x\ra\cap |W|$.
The base locus of the restriction of the pencil
$\sfq$ to the subspace $\la \Lambda,x\ra\cong \bbP^m$
contains $\Lambda$.
Thus the residual pencil consists of hyperplanes in $\la \Lambda,x\ra$
containing $x$.
Its base locus is a codimension 2 linear subspace of $\la \Lambda,x\ra$
containing $x$ unless one of the quadrics contains $\la \Lambda,x\ra$
and hence the two quadrics intersect along a hyperplane $\Lambda'$ in
$\la \Lambda,x\ra$.
If $\Lambda'$ exists, then it is a generator in $X$ intersecting
$\Lambda$ along a hyperplane.
Note that $\Lambda' = \rho_i(\Lambda)$ for
some reflection $\rho_i\in \Aut(q_0,q_1)$ and $\Lambda'\cap \Lambda$ is the
intersection of the fixed hyperplane $F_i$ of $\rho_i$ with $\Lambda'$.

The image of each such generator  under the projection map $p_\Lambda$
is the point $y_i = [v_i]$ in $|W|$ which is the singular point  of one
of the singular
quadrics in the pencil. In particular, we found $2m+1$ generators that
do not intersect each other outside $\Lambda$ but each intersects
$\Lambda$ along a hyperplane. Their proper transforms on the blow-up are
disjoint subvarieties each isomorphic to $\bbP^{m-1}$. This gives the
following birational picture of $X$.

Let $\bbP^{2m}_\Lambda\to \bbP^{2m}$ be the blow-up of $\bbP^{2m}$ along
$\Lambda$. The projection from $\Lambda$ defines an isomorphism from
$\bbP^{2m}_\Lambda$ to the projective bundle $\bbP(\calE)$, where
$\calE = \calO_{\bbP^m}^{\oplus m}\oplus \calO_{\bbP^m}(-1)$.

\begin{prop}
$X_\Lambda$ is isomorphic over $\sfP \cong \bbP^m$ to a closed
subvariety of the projective $(m+1)$-bundle $\bbP(\calE)$ over $\sfP$.
Let $\Sigma = \{y_1,\ldots,y_{2m+1}\}$ be the images in $|W|$ of the
vertices of singular quadrics in the pencil.
If $x\not\in \Sigma$ (resp. $x\in \Sigma$), then
the fiber $p_\Lambda^{-1}(x)$ is a codimension 2 (resp. codimension  1)
linear subspace in the fiber of the projective bundle.
\end{prop}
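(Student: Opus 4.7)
The plan is to identify $X_\Lambda$ with the strict transform of $X$ inside the projective bundle $\bbP(\calE) \cong \bbP^{2m}_\Lambda$, and then read off the fibers using the Kronecker normal form. Since $\Lambda$ is a smooth linear subspace of the smooth variety $X$, the inclusion $\Lambda \hookrightarrow X$ is a regular embedding, and since $X \cap \Lambda = \Lambda$ scheme-theoretically, the blow-up $X_\Lambda = \mathrm{Bl}_\Lambda(X)$ is canonically the strict transform of $X$ in $\bbP^{2m}_\Lambda$. This yields the claimed embedding over $\sfP$ as a closed subvariety.

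Next, to compute the fibers of $p_\Lambda\colon X_\Lambda \to \sfP$, apply Corollary~\ref{cor:generatorIsL} to realize $(q_0,q_1)$ in Kronecker normal form with $r = 0$, chosen so that $L$ is spanned by $v_0,\ldots,v_{m-1}$ and $W$ by $w_0,\ldots,w_m$. Working with the dual coordinates $(x_0,\ldots,x_m,y_0,\ldots,y_{m-1})$, we have $\Lambda = \{x_0=\cdots=x_m=0\}$ and $\sfP = \{y_0=\cdots=y_{m-1}=0\}$. Fix $x = [x_0{:}\cdots{:}x_m] \in \sfP$ and parametrize the corresponding fiber $\langle \Lambda, x\rangle$ of $\bbP(\calE)\to\sfP$ by $[t{:}y_0{:}\cdots{:}y_{m-1}]$, with $\Lambda$ cut out by $t=0$. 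Substituting $(X_i,Y_j) = (tx_i,y_j)$ into the normal form shows that both forms acquire a common factor of $t$:
\[
q_0\big|_{\langle \Lambda,x\rangle} = t\cdot\ell_0, \qquad q_1\big|_{\langle \Lambda,x\rangle} = t\cdot\ell_1,
\]
where
\[
\ell_0 = t\sum_{i=0}^{m} a_{2i}x_i^2 + \sum_{i=0}^{m-1} x_{i+1}y_i, \qquad
\ell_1 = t\sum_{i=0}^{m} a_{2i+1}x_i^2 + \sum_{i=0}^{m-1} x_{i}y_i.
\]
Discarding the common factor of $t$, the fiber of the strict transform $X_\Lambda$ over $x$ equals $V(\ell_0,\ell_1)$ inside the $\bbP^m$ parametrizing the bundle fiber.

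It remains to detect when $\ell_0$ and $\ell_1$ become proportional. Matching the $y_i$-coefficients in $\ell_0 = \lambda \ell_1$ forces $x_{i+1} = \lambda x_i$, hence $x_i = \lambda^i x_0$; the resulting $t$-coefficient condition $\sum a_{2i}\lambda^{2i} = \lambda\sum a_{2i+1}\lambda^{2i}$ collapses (in characteristic $2$) to $f(\lambda)=0$. Thus $x = \Omega(1,\lambda)$ is the vertex of the singular quadric $V(\sfq_{(1,\lambda)})$, i.e.\ $x\in\Sigma$; in this case $\ell_0$ and $\ell_1$ agree up to scalar and the fiber is a hyperplane of codimension one. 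For every other $x\in\sfP$ the two forms are linearly independent and the fiber is a codimension-two linear subspace, as claimed. The chief subtlety in the argument is justifying the strict-transform description of $X_\Lambda$; once that is in place, the quasi-split normal form makes the common factorization by $t$ visible, and the rest is elementary linear algebra.
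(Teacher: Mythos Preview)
Your argument is correct and follows the same underlying idea as the paper's discussion preceding the proposition: restrict the pencil to the span $\langle \Lambda, x\rangle$, observe that each quadric therein contains the hyperplane $\Lambda$ and hence splits off a residual hyperplane, and then analyze when the two residual hyperplanes coincide. The paper phrases this geometrically---the residual pencil of hyperplanes degenerates precisely when some quadric in the pencil contains all of $\langle \Lambda, x\rangle$, producing another generator $\Lambda' = \rho_i(\Lambda)$ whose projection is the vertex $y_i$---whereas you compute the residual hyperplanes $V(\ell_0), V(\ell_1)$ explicitly in Kronecker coordinates with $r=0$ and solve for proportionality directly via $f(\lambda)=0$. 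Your explicit route makes the link between the Veronese rank condition \eqref{eq:veronese} on $|W|$ and the half-discriminant transparent; the paper's route ties the degenerate fibers to the reflection action on generators, which it uses immediately afterwards.

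One small gap: when you solve $\ell_0 = \lambda\ell_1$ you implicitly assume $\ell_1 \ne 0$, thereby missing the boundary case $x = [0:\cdots:0:1]$ with $a_n = 0$ (corresponding to $\lambda = \infty$, the vertex $\Omega(0,1) = w_m$ of the singular quadric $V(q_1)$). This is harmless---either check both directions of proportionality, or invoke Remark~\ref{rem:technicalNuisance} to assume $a_n \ne 0$ at the outset---but it should be made explicit.
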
 

The inclusion $X_\Lambda\hookrightarrow \bbP(\calE)$ is
given by a surjective homomorphism of coherent sheaves $\calE\to \calF$
such that $X_\Lambda \cong \bbP(\calF):=\Proj~S^\bullet(\calF)$.  The
set $\Sigma$ is equal to the singular set $\Sing(\calF)$, the set of
points $x\in \bbP^m$ such that $\dim_\bbk(\calF(x)) > \rank(\calF)$.
In our case $\rank(\calF) = m-1$ and $\dim_\bbk\calF(x) = m$
for $x \in \Sing(\calF)$.

Let $U =
\bbP^m\setminus \Sigma$ and $j:U\hookrightarrow \bbP^m$ be the open
inclusion. Since $\textrm{codim}~\Sigma \ge 2$, the sheaf
$j_*j^*\calF$ is a locally free sheaf of rank $m-1$ and $\calF$ is
its subsheaf; in particular, it is a reflexive sheaf \cite{Hartshorne}.
Thus, we obtain

\begin{prop}\label{eq:reflexive} 
\[ X_\Lambda \cong \bbP(\calF), \]
where $\calF$ is a reflexive sheaf of rank $m-1$ with $\Sing(\calF) =
\Sigma.$ In particular, $X$ is birationally equivalent to a projective
$(m-2)$-bundle over $\bbP^m$.
\end{prop}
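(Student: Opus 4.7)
The plan is to take as starting point the surjection $\calE\to\calF$ that the discussion preceding the statement already provides: the closed subfamily $X_\Lambda\subset\bbP(\calE)$ whose fibers are linear subspaces of the ambient $\bbP^m$-fibers is determined by a subsheaf $\calK\subset\calE$ with $X_\Lambda=\bbP(\calE/\calK)=\bbP(\calF)$. On $U=\bbP^m\setminus\Sigma$ the fibers of $X_\Lambda$ have constant projective dimension $m-2$, so $\calF|_U$ is locally free of rank $m-1$, while at each $y_i\in\Sigma$ the fiber jumps to a hyperplane, giving $\dim_\bbk\calF(y_i)=m$. This pins down $\rank(\calF)=m-1$ and $\Sing(\calF)=\Sigma$.

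The heart of the argument is to upgrade $\calF$ from merely coherent to reflexive. Writing $j:U\hookrightarrow\bbP^m$ for the open inclusion, the key point is that $\Sigma$ is a finite set of closed points in the smooth variety $\bbP^m$ of dimension $m\ge 2$, so its codimension is at least $2$. By the standard theory of reflexive sheaves on smooth varieties, $j_*(\calF|_U)$ is then a reflexive extension of $\calF|_U$ to $\bbP^m$ of rank $m-1$, and one further checks it is in fact locally free (by Hartshorne's codimension-$3$ criterion when $m\ge 3$, and because rank $1$ reflexive sheaves on a smooth surface are invertible when $m=2$). The remaining task is to identify $\calF$ with $j_*(\calF|_U)$ through the unit of adjunction $\calF\to j_*j^*\calF$. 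This map is an isomorphism on $U$; to conclude that it is an isomorphism everywhere, I would show that $\calF$ has no local sections supported on $\Sigma$, i.e.\ that $\calF$ is torsion-free. This should follow from the integrality of $X_\Lambda=\bbP(\calF)$: a torsion summand of $\calF$ supported at some $y_i$ would give an embedded component of $\bbP(\calF)$ lying in the fiber over $y_i$, contradicting integrality.

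For the final clause, on any affine open $V\subset U$ on which $\calF$ trivializes we have $X_\Lambda|_V\cong\bbP(\calF|_V)\cong V\times\bbP^{m-2}$, so $X_\Lambda\to\bbP^m$ is generically a Zariski-locally trivial $\bbP^{m-2}$-bundle. Composing with the birational blow-down $X_\Lambda\to X$ identifies $X$ birationally with a projective $(m-2)$-bundle over $\bbP^m$. I expect the main technical obstacle to be the torsion-freeness check at points of $\Sigma$, since this is precisely where the fiber dimension jumps and the local structure of $X_\Lambda$ above the jumping fibers must be analysed directly in order to exclude embedded primes and to identify $\calF$ with its reflexive extension $j_*j^*\calF$.
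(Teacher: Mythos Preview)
Your central step --- identifying $\calF$ with $j_*j^*\calF$ --- is where the argument breaks. First, the logic is incomplete: torsion-freeness of $\calF$ only shows that the unit map $\calF\to j_*j^*\calF$ is \emph{injective} (its kernel is the subsheaf of sections supported on $\Sigma$); it says nothing about surjectivity. Second, and more seriously, the conclusion you are aiming for is self-contradictory. You argue that $j_*j^*\calF$ is locally free; if $\calF$ were isomorphic to it, then $\Sing(\calF)=\emptyset$, contradicting the very statement $\Sing(\calF)=\Sigma$ that you have already established. The case $m=2$ makes this concrete: there $\calF=\calI_\Sigma$ (the ideal sheaf of five points in $\bbP^2$), and $j_*j^*\calI_\Sigma=\calO_{\bbP^2}$, so the unit map is the strict inclusion $\calI_\Sigma\hookrightarrow\calO_{\bbP^2}$, not an isomorphism. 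The paper, by contrast, only asserts that $\calF$ is a \emph{subsheaf} of $j_*j^*\calF$ and draws the reflexivity conclusion from that inclusion together with \cite{Hartshorne}.

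A smaller point: your invocation of ``Hartshorne's codimension-$3$ criterion'' to conclude that $j_*j^*\calF$ is locally free when $m\ge 3$ is a misreading of that result. What Hartshorne proves is that a reflexive sheaf on a regular scheme is locally free \emph{outside} a closed set of codimension $\ge 3$; it does not say that a reflexive sheaf whose non-locally-free locus is already known to sit in codimension $\ge 3$ must be locally free. So even the input to your identification step is not established for $m\ge 3$.
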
 
 
\begin{example}
Assume $m = 2$, then $p_\Lambda$ is the projection of the quartic del
Pezzo surface $X$ from a line. It is isomorphic to the blow-up
$\bbP(\calF)$, where $\calF$ is the ideal sheaf $\calI_\Sigma$ of the
set $\Sigma$ of five points in the plane.
\end{example}

\begin{remark} \label{rem:ptInPlaneModel}
Recall that, over an algebraically closed field, a del Pezzo surface of
degree $4$ is obtained by blowing up $5$ general points in the plane
$\bbP^2$ which lie on a unique conic $C = V(q)$, where $q$ is a
nondegenerate quadratic form.
Yu. Prokhorov asked us for a geometric interpretation in the plane model
of the canonical point $\Pi$ from Theorem~\ref{thm:canPlane}.

A natural guess is that it is the \emph{strange point} of the conic.
It has the property that any line through this point is tangent to $C$.
This turns out to be false.
In fact, our conic depends
only on the pencil of the associated alternate bilinear forms. In a
Kronecker basis it can be given by equation $x_1x_3+x_2^2 = 0$. The
equation of $\Pi$ depends on the coefficients $(a_0,\ldots,a_5)$ of the
half-discriminant, and coincides with the strange point only in the case
when $a_2a_5 =a_3a_4, a_0a_3 = a_1a_2.$
\end{remark}

The following is well known when the characteristic is not $2$;
see Chapter~3~of~\cite{Reid} for a proof over $\bbC$.

\begin{prop}
Let $A_{m-1}(X)$ be the Chow group of algebraic $(m-1)$-cycles on $X$
equipped with a structure of a quadratic lattice with respect to the
interesection of algebraic cycles on $X$. Then 
\[ A_{m-1}(X) \cong \bbZ^{2m+2} \ . \]
Let $A_{m-1}(X)_0$ denote the orthogonal complement of $K_X^{m-1}$ in
$A_{m-1}(X)$. Then it is isomorphic to the roots lattice of type
$D_{2m+1}$ taken with the sign $(-1)^{m-1}$.
\end{prop}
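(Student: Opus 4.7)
The plan is to adapt the strategy of Reid~\cite{Reid} (who treated the case $\cha(\Bbbk)\ne 2$) to characteristic $2$, exploiting the birational picture established in the preceding proposition together with the explicit description of the $2^{2m}$ generators from Section~\ref{sec:autos}.

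First I would fix a generator $\Lambda \subset X$ (which exists by Theorem~\ref{thm:genExists}) and use the projection $p_\Lambda : X_\Lambda \to \sfP \cong \bbP^m$ coming from Proposition~\ref{eq:reflexive}. The stratification of $\bbP^m$ into $U = \bbP^m\setminus\Sigma$ and the $2m+1$ points of $\Sigma$ pulls back to a stratification of $X_\Lambda$ into a genuine $\bbP^{m-2}$-bundle over $U$ and $2m+1$ copies of $\bbP^{m-1}$ over the points of $\Sigma$. Feeding these into the localization and projective-bundle exact sequences for Chow groups yields a computation of $A_\ast(X_\Lambda)$ in terms of $A_\ast(\bbP^m)$ and $A_\ast(\bbP^{m-1})$, all of which are free abelian. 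Pushing forward along the birational morphism $X_\Lambda \to X$ and applying the blow-up formula for the smooth center $\Lambda \cong \bbP^{m-1}$ gives $A_{m-1}(X)$ as a free abelian group of rank $2m+2$.

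Next I would write down an explicit basis. The natural candidates are the class $h^{m-1}$ of a linear section of codimension $m-1$ and the classes $[\Lambda_i]$ ($i=0,\ldots,2m$) of the $2m+1$ generators $\Lambda_i := \rho_i(\Lambda)$, each of which meets $\Lambda$ along a hyperplane of $\Lambda$ and lies over a point $y_i \in \Sigma$ under the projection. Together with $[\Lambda]$ itself this gives $2m+3$ classes, and a single linear relation (coming from the fact that $\Lambda$ and the $\Lambda_i$ all appear in fibers of $p_\Lambda$ over degenerate quadrics) cuts them down to rank $2m+2$. The intersection pairings can then be computed using the rulings of the singular quadrics $V(\sfq_{x_i})$: one checks $[\Lambda]\cdot[\Lambda_i] = 1$, $[\Lambda_i]\cdot[\Lambda_j] = 0$ for $i\ne j$ (they lie in distinct fibers of $p_\Lambda$), $[\Lambda]\cdot h^{m-1} = 1$, $[\Lambda_i]\cdot h^{m-1} = 1$ and $h^{m-1}\cdot h^{m-1} = 4 = \deg(X)$.

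Finally, the orthogonal complement of $K_X^{m-1} = (-(2m-3)h)^{m-1}$ has rank $2m+1$, and in the basis coming from the $\Lambda_i$'s (after subtracting a suitable multiple of $K_X^{m-1}$ to enforce orthogonality) the Gram matrix is, up to the sign $(-1)^{m-1}$ dictated by the parity of $\dim X = 2m-2$, the standard Gram matrix of the root lattice $D_{2m+1}$. The hardest step is controlling the Chow group of $X_\Lambda$ where the projection $p_\Lambda$ degenerates over $\Sigma$ (the sheaf $\calF$ is only reflexive, not locally free, so the projective bundle formula does not apply directly); the localization sequence combined with the explicit description of the special fibers as $\bbP^{m-1}$'s is what resolves this. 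No step uses the characteristic of $\bbk$ in an essential way beyond the preceding proposition, so the result holds uniformly in characteristic $2$.
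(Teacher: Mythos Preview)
Your overall strategy for computing the rank of $A_{m-1}(X)$ matches the paper's: fix a generator $\Lambda$, use the localization sequence for the stratification of $X_\Lambda$ by $p_\Lambda^{-1}(\Sigma)$, apply the projective-bundle formula over $U=\bbP^m\setminus\Sigma$ and over the points of $\Sigma$, and then the blow-up formula for $X_\Lambda\to X$. This part is fine and is exactly what the paper does.

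The genuine gap is in your explicit intersection numbers, which are only correct for $m=2$. You assert $[\Lambda]\cdot[\Lambda_i]=1$ and $[\Lambda_i]\cdot[\Lambda_j]=0$ for $i\ne j$, justifying the latter by saying the $\Lambda_i$ ``lie in distinct fibers of $p_\Lambda$.'' But that is a statement about their \emph{proper transforms} in $X_\Lambda$, not about the cycles on $X$. On $X$ itself each $\Lambda_i$ meets $\Lambda$ along a hyperplane $H_i\subset\Lambda\cong\bbP^{m-1}$, and for $i\ne j$ the intersection $H_i\cap H_j$ is a $\bbP^{m-3}$ contained in $\Lambda_i\cap\Lambda_j$; so for $m\ge 3$ the generators $\Lambda_i,\Lambda_j$ do meet. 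The paper records the correct formula (due to Reid): $[\Lambda_I]\cdot[\Lambda_J]=(-1)^r(\lfloor r/2\rfloor+1)$ with $r=\dim(\Lambda_I\cap\Lambda_J)$, giving $r=m-2$ for $(\Lambda,\Lambda_i)$ and $r=m-3$ for $(\Lambda_i,\Lambda_j)$. With your Gram matrix you would not recover $D_{2m+1}$ for $m\ge 3$.

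Relatedly, the final step (``subtract a suitable multiple of $K_X^{m-1}$ and read off the $D_{2m+1}$ Gram matrix'') is too vague to succeed without the correct numbers. The paper instead takes the explicit basis $e_0=\eta^{m-1}-[\Lambda_\emptyset]$, $e_i=[\Lambda_i]$ ($i=1,\ldots,2m+1$), and checks that $\alpha_0=-e_0+[\Lambda_\emptyset]+e_{2m}+e_{2m+1}$ and $\alpha_i=e_i-e_{i+1}$ form a root basis of type $D_{2m+1}$, using in particular $[\Lambda_I]^2=(-1)^{m-1}(\lfloor(m-1)/2\rfloor+1)$ and $([\Lambda_I]-[\Lambda_J])^2=2(-1)^{m-1}$ when $\dim(\Lambda_I\cap\Lambda_J)=m-3$.
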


\begin{proof}
Let $U = \bbP^m\setminus \Sigma$, let $Y_\Sigma = p_\Lambda^{-1}(\Sigma)$,
and let $X_U = p_\Lambda^{-1}(U)$.
By Proposition~1.8~from~\cite{Fulton}, we have the following exact
sequence of Chow groups $A_k$ of algebraic $k$-cycles:

\[\xymatrix{
A_{k}(Y_{\Sigma})\ar[r]&A_{k}(X_\Lambda)\ar[r]&A_{k}(X_U)\ar[r]&0.}\]

For a vector bundle $\calE$ of rank $e+1$ over a base $Z$,
the Chow groups of the projective bundle $\bbP(\calE)$ are well known.
By Theorem~3.3~of~\cite{Fulton}, we have
\[
A_k(\bbP(\calE)) = \bigoplus_{i=0}^e A_{k-e+i}(Z) \ .
\]
It follows that 
\[
A_{m-1}(X_U) \cong \bbZ^{m-1}, \ A_{m-1}(Y_\Sigma) \cong \bbZ^{2m+1} \ .
\]
The homomorphism $A_{m-1}(Y_\Sigma)\to A_{m-1}(X_\Sigma)$ is
injective because $Y_\Sigma$ is the union of disjoint subvarieties of
dimension $m-1$.
This shows that  $A_{m-1}(X_\Lambda) \cong \bbZ^{3m}$.
It remains to use \cite{Fulton}, Proposition 6.7 (e) that computes the
cohomology of the blow-up.
We have $A_{m-1}(X_\Lambda) = A_{m-1}(X) \oplus \bbZ^{m-2}$.
This gives $A_{m-1}(X) \cong \bbZ^{2m+2}$.

The rest of the arguments can be borrowed from Chapter~3~of~\cite{Reid}
since they do not depend on the characteristic of the ground field.
We include this for completeness sake.
First, we fix one generator $\Lambda$ and use
Corollary \ref{cor:generators} to index generators by subsets of the set
$[1,2m+1] = \{1,2,\ldots,2m+1\}$ modulo taking the complementary subset.
Here $\Lambda_\emptyset := \Lambda$ is the generator corresponding to
the empty set,
and each $\Lambda_{\{i\}} := \rho_i(\Lambda)$ is the generator
corresponding to the image of $\Lambda$ under the reflection $\rho_i$.

We equip $A_{m-1}(X)$ with a structure of a nondegenerate quadratic
lattice with respect to the intersection pairing $A_{m-1}(X)\times
A_{m-1}(X)\to \bbZ$. We have
\[ [\Lambda_I]\cdot [\Lambda_J] =
(-1)^r\left(\left\lfloor\frac{r}{2}\right\rfloor+1\right),\]
 where $r = \dim \Lambda_I\cap \Lambda_J.$ In
particular,
\begin{equation}\label{eq:miles1}
[\Lambda_I]^2 = (-1)^{m-1}
\left(\left\lfloor\frac{m-1}{2}\right\rfloor+1\right)
\end{equation}
and, if $\dim \Lambda_I\cap \Lambda_J = m-3$,
\begin{equation} \label{eq:miles2}
\left([\Lambda_I]-[\Lambda_J]\right)^2 = 2(-1)^{m-1}.
\end{equation}
Let $\eta$ be the class of a hyperplane section of $X$. We have
\[\eta^{m-1}\cdot [\Lambda_I] = 1.\]
 Since $K_X = (3-2m)\eta$,  we have
$K_X^{m-1} = (3-2m)^{m-1}\eta^{m-1}$, hence
\[A_{m-1}(X)_0:=(K_X^{m-1})^\perp = (\eta^{m-1})^\perp.\]
 We may assume that the
generators $\Lambda_i, i = 1,\ldots,2m+1$, are the pre-images of the
points $y_i\in \Sigma$ under the projection map $p_\Lambda:X_\Lambda \to
|W|$.  Let $e_i = [\Lambda_i]$ be their classes in $A_{m-1}(X)$. Let
\[e_0 = \eta^{m-1}-[\Lambda_\emptyset] \]
(note that in the case $m = 2$ it
coincides with the class of the pre-image of a line in $\bbP^2$).  One
checks that the classes $e_0,e_1,\ldots,e_{2m+1}$ freely
generate $A_{m-1}(X)$ and the classes
\[
\alpha_0 = -e_0+[\Lambda_\emptyset]+e_{2m}+e_{2m+1}, \
\alpha_i = e_i-e_{i+1},\ \
i = 1,\ldots 2m,
\]
freely generate $A_{m-1}(X)_0$. In fact they form
a root basis of type $D_{2m+1}$ in $A_{m-1}(X)_0$ (taken with the sign
$(-1)^{m-1}$).
\end{proof}

Recall that the Weyl group $W(D_n)$ of the root lattice of type
$D_{n}$ generated by reflections in simple roots $\alpha_i$ is
isomorphic to $2^{n-1}\rtimes \frakS_{n}$. The group $\Aut(X)$ acts
naturally on $A_{m-1}(X)$ leaving $K_X$ invariant. This defines a
homomorphism 
\[ \rho:\Aut(X) \to \Or(A_{m-1}(X)_0) \cong
\Or(\sfD_{n}).
\]
to the orthogonal group of the quadratic lattice
$\sfD_{n}$ defined by the Cartan matrix of the root system of type
$D_{n}$. The image is contained in the subgroup of $\Or(\sfD_{n})$
of isometries that can be lifted to $A_{m-1}(X)$. It follows from the
theory of quadratic lattices that this subgroup is of index 2 and
coincides with the Weyl group $W(D_{n})$. Thus, we have defined a
homomorphism 
\[ \rho:\Aut(X) \to W(D_{n}) \cong 2^{n-1}\rtimes \frakS_{n}. \]

\begin{thm}
The homomorphism $\rho$ is injective and sends the subgroup
$\Aut(q_0,q_1)$ to the subgroup $2^{n-1}$ of $W(D_{n})$.
\end{thm}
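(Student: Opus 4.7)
The plan is to identify the two short exact sequences $1 \to \Aut(q_0,q_1) \to \Aut(X) \xrightarrow{\pi} G \to 1$ (from Theorem~\ref{thm:autosPlus}) and $1 \to 2^{n-1} \to W(D_n) \to \frakS_n \to 1$ under the map $\rho$. Over our algebraically closed base field, $X$ is automatically quasi-split, so both sequences are available. The first assertion of the theorem will follow from this identification, while injectivity is the substantive content.

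The first step is to verify that the composition $\Aut(X) \xrightarrow{\rho} W(D_n) \twoheadrightarrow \frakS_n$ coincides with $\Aut(X) \xrightarrow{\pi} G \hookrightarrow \frakS_n$, where the last arrow is the faithful permutation action of $G$ on $V(\Delta) = \{x_1,\ldots,x_n\}$ (faithful because $G \subset \PGL(U)$ and no nontrivial element of $\PGL_2$ fixes $n \ge 3$ points). The identification hinges on the chain of bijections
\[
i \;\leftrightarrow\; x_i \;\leftrightarrow\; y_i \;\leftrightarrow\; \rho_i \;\leftrightarrow\; \Lambda_{\{i\}},
\]
which links the indices to the basis elements $e_i = [\Lambda_{\{i\}}]$ of $A_{m-1}(X)$. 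Reflections in the simple roots $\alpha_i = e_i - e_{i+1}$ generate $\frakS_n$ acting as consecutive transpositions on the $e_i$. Using Corollary~\ref{cor:generatorIsL} to choose a lift of $G$ to $\Aut(X)$ fixing $\Lambda_\emptyset$, I would then compute $g(\Lambda_{\{i\}}) = g\rho_i g^{-1}(\Lambda_\emptyset) = \rho_{g(i)}(\Lambda_\emptyset) = \Lambda_{\{g(i)\}}$ to conclude that each $g \in G$ sends $e_i \mapsto e_{g(i)}$ and fixes $e_0 = \eta^{m-1} - [\Lambda_\emptyset]$. The first assertion follows immediately, since $\Aut(q_0,q_1) = \ker(\pi)$ must land in the kernel of $W(D_n) \to \frakS_n$, namely $2^{n-1}$.

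For injectivity, any $\varphi \in \ker(\rho)$ has trivial image in $\frakS_n$; by the identification above, $\pi(\varphi) = 1$ and hence $\varphi \in \Aut(q_0,q_1)$. Since $|\Aut(q_0,q_1)| = 2^{2m} = 2^{n-1}$, it suffices to prove that $\rho|_{\Aut(q_0,q_1)}$ is injective. The main obstacle is verifying that distinct generators of $X$ produce distinct classes in $A_{m-1}(X)$: granted this, any $\varphi \in \ker(\rho) \cap \Aut(q_0,q_1)$ must fix $\Lambda_\emptyset$ as a subvariety, and by simple transitivity (Corollary~\ref{cor:generators}) $\varphi = 1$. Injectivity of the class map on the set of generators should follow from the pairing formula \eqref{eq:miles2}: adjacent generators (differing by two reflections) have cohomology classes differing by nonzero $D_n$-roots, and since the orbit of $\Aut(q_0,q_1)$ on generators is connected by such adjacencies, summing these nonzero root differences along any path yields a nonzero element of the root lattice whenever the endpoints are distinct.
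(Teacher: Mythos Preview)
Your approach differs from the paper's: you aim to match the two short exact sequences and read off both assertions, while the paper argues injectivity directly and does not explicitly address why $\Aut(q_0,q_1)$ lands in $2^{n-1}$. The paper simply asserts that any $g\in\ker\rho$ fixes every generator, then observes that the generators $\Lambda_{\{i\}}$ adjacent to $\Lambda_\emptyset$ project under $p_\Lambda$ to the singular points $y_i\in|W|$; since these span $|W|$, the element $g$ acts trivially on $|W|$, hence lies in $\Aut(q_0,q_1)$, hence is trivial by simple transitivity. Both routes therefore rest on the same lemma---distinct generators have distinct cycle classes---which the paper takes for granted and you correctly flag as the crux.

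Your argument for that lemma, however, does not work as written. That each edge of the adjacency graph contributes a nonzero root (via \eqref{eq:miles2}) does not prevent the sum along a path with distinct endpoints from vanishing: sums of nonzero roots cancel routinely in any root lattice, and nothing about this particular graph rules that out. The direct fix is to use the full intersection formula $[\Lambda_I]\cdot[\Lambda_J]=(-1)^r(\lfloor r/2\rfloor+1)$ with $r=\dim(\Lambda_I\cap\Lambda_J)$: this is injective as a function of $r\in\{-1,0,\ldots,m-1\}$ (the values are $0,1,-1,2,-2,\ldots$), so $[\Lambda_I]=[\Lambda_J]$ forces $[\Lambda_I]\cdot[\Lambda_J]=[\Lambda_J]^2$, hence $r=m-1$, hence $\Lambda_I=\Lambda_J$. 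There is also a circularity in your first step: you verify the equality of the two compositions $\Aut(X)\to\frakS_n$ only on the lifted copy of $G$, but agreement on $\Aut(q_0,q_1)$ is precisely the containment $\rho(\Aut(q_0,q_1))\subset 2^{n-1}$ you then claim as a consequence. That part needs an independent check, for instance by computing $\rho(\rho_i)$ on the basis $e_0,\ldots,e_n$ and identifying it with an even sign-change element of $W(D_n)$.
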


\begin{proof}
An element $g$ of the kernel fixes all generators. Assume $p\ne 2$. We
know that any generator $\Lambda$ intersects $n=2m+1$ other generators
that intersect $\Lambda$ along a hyperplane. We know from the
description of the projection map $p_\Lambda:X_\Lambda\to |W|$ given in
the beginning of this section that $\Lambda$ intersects $n$
generators $\Lambda_{i}$ that are projected to the singular points
$y_i\in |W|$ of $n$ singular fibers from the pencil. Thus $g$ in its
action in $|W|$ fixes these points, and since they generate $|W|$ fixes
$|W|$ pointwise. Thus the projection of $g$ to $G$ from Theorem
\ref{thm:autosPlus} is the identity, hence $g$ is the identity. 
\end{proof}

\section*{Acknowledgments}
The second author would like to thank D.~Leep for helpful conversations
and A.~Efimov for suggesting the interpretation via the Arf invariant
used in Section~\ref{sec:arf}.
Both authors thank A.~Skorobogatov and an anonymous referee for comments.

\end{document}